\DeclareFontFamily{OT1}{rsfs}{}
\DeclareFontShape{OT1}{rsfs}{n}{it}{<-> rsfs10}{}
\DeclareMathAlphabet{\curly}{OT1}{rsfs}{n}{it}
\newcommand{\eqnum}{\refstepcounter{equation}\textup{\tagform@{\theequation}}}
\newcommand\beq[1]{\begin{equation}\label{#1}}
	\newcommand\eeq{\end{equation}}
\newcommand\beqa{\begin{eqnarray*}}
	\newcommand\eeqa{\end{eqnarray*}}
\title[Derived categories of Quot schemes]{Derived categories of Quot schemes 
of zero-dimensional quotients on curves}
\date{}
\author{Yukinobu Toda}
\DeclareFontFamily{U}{rsfs}{%
	\skewchar\font127}
\DeclareFontShape{U}{rsfs}{m}{n}{%
	<-6>rsfs5<6-8.5>rsfs7<8.5->rsfs10}{}
\DeclareSymbolFont{rsfs}{U}{rsfs}{m}{n}
\DeclareRobustCommand*\rsfs{%
	\@fontswitch\relax\mathrsfs}
\theoremstyle{plain}
\newtheorem{thm}{Theorem}[section]
\newtheorem{prop}[thm]{Proposition}
\newtheorem{lem}[thm]{Lemma}
\newtheorem{defi}[thm]{Definition}
\newtheorem{rmk}[thm]{Remark}
\newtheorem{cor}[thm]{Corollary}
\newtheorem{prop-defi}[thm]{Proposition-Definition}
\newtheorem{thm-defi}[thm]{Theorem-Definition}
\newtheorem{lem-defi}[thm]{Lemma-Definition}
\newtheorem{exam}[thm]{Example}
\newtheorem{algrho}[thm]{Algorhithm}
\newcommand{\ssslash}{/\!\!/}
\newcommand{\cC}{\mathcal{C}}
\newcommand{\dD}{\mathcal{D}}
\newcommand{\eE}{\mathcal{E}}
\newcommand{\fF}{\mathcal{F}}
\newcommand{\gG}{\mathcal{G}}
\newcommand{\hH}{\mathcal{H}}
\newcommand{\lL}{\mathcal{L}}
\newcommand{\mM}{\mathcal{M}}
\newcommand{\nN}{\mathcal{N}}
\newcommand{\oO}{\mathcal{O}}
\newcommand{\pP}{\mathcal{P}}
\newcommand{\qQ}{\mathcal{Q}}
\newcommand{\sS}{\mathcal{S}}
\newcommand{\uU}{\mathcal{U}}
\newcommand{\wW}{\mathcal{W}}
\newcommand{\xX}{\mathcal{X}}
\newcommand{\yY}{\mathcal{Y}}
\newcommand{\zZ}{\mathcal{Z}}
\newcommand{\fM}{\mathfrak{M}}
\newcommand{\fU}{\mathfrak{U}}
\newcommand{\Supp}{\mathop{\rm Supp}\nolimits}
\newcommand{\Hom}{\mathop{\rm Hom}\nolimits}
\newcommand{\dR}{\mathbf{R}}
\newcommand{\Pic}{\mathop{\rm Pic}\nolimits}
\newcommand{\id}{\textrm{id}}
\newcommand{\ch}{\mathop{\rm ch}\nolimits}
\newcommand{\Ext}{\mathop{\rm Ext}\nolimits}
\newcommand{\Spec}{\mathop{\rm Spec}\nolimits}
\newcommand{\rank}{\mathop{\rm rank}\nolimits}
\newcommand{\Coh}{\mathop{\rm Coh}\nolimits}
\newcommand{\ev}{\mathop{\rm ev}\nolimits}
\newcommand{\us}{\mathchar`-\rm{us}}
\newcommand{\sss}{\mathchar`-\rm{ss}}
\newcommand{\cneq}{\mathrel{\raise.095ex\hbox{:}\mkern-4.2mu=}}
\newcommand{\eqcn}{\mathrel{=\mkern-4.5mu\raise.095ex\hbox{:}}}
\newcommand{\ext}{\mathop{\rm ext}\nolimits}
\newcommand{\Aut}{\mathop{\rm Aut}\nolimits}
\newcommand{\PPer}{\mathop{\rm Per}\nolimits}
\newcommand{\IC}{\mathop{\rm IC}\nolimits}
\newcommand{\DT}{\mathop{\rm DT}\nolimits}
\newcommand{\Sym}{\mathop{\rm Sym}\nolimits}
\newcommand{\modu}{\mathop{\rm mod}\nolimits}
\newcommand{\End}{\mathop{\rm End}\nolimits}
\newcommand{\Quot}{\mathop{\rm Quot}\nolimits}
\newcommand{\RHom}{\mathop{\dR\mathrm{Hom}}\nolimits}
\newcommand{\Ker}{\mathop{\rm Ker}\nolimits}
\newcommand{\GL}{\mathop{\rm GL}\nolimits}
\newcommand{\cl}{\mathop{\rm cl}\nolimits}
\newcommand{\MF}{\mathop{\rm MF}\nolimits}
\newcommand{\Crit}{\mathop{\rm Crit}\nolimits}
\newcommand{\wt}{\mathrm{wt}}
\newcommand{\qcoh}{\mathrm{qcoh}}
\newcommand{\inclusion}{\ar@<-0.3ex>@{^{(}->}[r]}
\newcommand{\upinclusion}{\ar@<-0.3ex>@{^{(}->}[u]}
\newcommand{\leinclusion}{\ar@<-0.3ex>@{^{(}->}[l]}
\newcommand{\doinclusion}{\ar@<-0.3ex>@{^{(}->}[d]}
\newcommand{\diasquare}{\ar@{}[rd]|\square}
\newcommand{\lgakko}{(\!(}
\newcommand{\rgakko}{)\!)}
\newcommand{\C}{\mathbb{C}^{\ast}}
\newcommand{\st}{\mathchar`-\rm{st}}
\newcommand{\dDT}{\mathcal{DT}}
\renewcommand{\theequation}{%
	\thesection.\arabic{equation}}
\begin{document}
	
	\begin{abstract}
		We prove the existence of semiorthogonal decompositions of 
		derived categories of Quot schemes 
		of zero-dimensional quotients on curves
		in terms of derived categories of symmetric products of curves. 
		The above result is a categorical analogue of a similar formula 
		for the class of Quot schemes in the Grothendieck ring of 
		varieties by Bagnarol-Fantechi-Perroni. 
		It is a special case of a more 
		general Quot formula of relative dimension one, 
		which is regarded as a Bosonic counterpart 
		of the Quot formula conjectured by Jiang and proved by the author. 
		The proof involves categorical wall-crossing formula 
		for framed one loop quiver, which itself 
		is motivated and has applications to 
		categorical wall-crossing formula of Donaldson-Thomas 
		invariants. 
	
	\end{abstract}
	
	\maketitle
	
	%\setcounter{tocdepth}{1}
	%\tableofcontents

	\section{Introduction}
	\subsection{Derived categories of Quot schemes over curves}
	For $(r, d) \in \mathbb{Z}_{\ge 0}^2$, let 
	$\mathrm{Gr}(r, d)$ be the Grassmannian 
	variety parameterizing quotients 
	$\mathbb{C}^r \twoheadrightarrow Q$ with $\dim Q=d$. 
	In~\cite{Kapranov}, Kapranov proved the existence of 
	a full strong exceptional collection 
	\begin{align}\label{intro:Grass}
		D^b(\mathrm{Gr}(r, d)) =\langle E_{\alpha} : \alpha \in \mathbb{B}(r, d) \rangle
		\end{align}
	where $\mathbb{B}(r, d)$ is the set of Young diagrams
	with $\mathrm{width} \le r-d$ and $\mathrm{height} \le d$.  
	
	The Grassmannian $\mathrm{Gr}(r, d)$ is regarded as a Grothendieck 
	Quot scheme over the point. By replacing the point by a curve, we 
	obtain the Quot scheme of points over a curve. 
	Let $C$ be a smooth projective curve over $\mathbb{C}$, 
	and $\eE \to C$ a vector bundle on it of rank $r$. 
	We consider the Quot scheme 
	parameterizing zero-dimensional quotients of $\eE$ with length $d$
	\begin{align*}
		\mathrm{Quot}_C(\eE, d)=\{ \eE \twoheadrightarrow \qQ : 
		\dim \Supp(\qQ)=0, 
		\mathrm{length}(\qQ)=d\}. 
		\end{align*}
	The Quot scheme $\Quot_{C}(\eE, d)$ is a smooth projective variety of dimension $rd$. 
	When $r=1$, it is isomorphic to the symmetric product 
	$\mathrm{Sym}^d(C)$. 
	In this paper, we prove the following structure of the derived category 
	of $\Quot_C(\eE, d)$, which gives a one dimensional analogue of (\ref{intro:Grass}): 
	\begin{thm}\label{intro:thm1}
		There is a semiorthogonal decomposition of the form 
		\begin{align}\label{intro:sodQ}
			D^b(\Quot_C(\eE, d))=\left\langle D^b(\mathrm{Sym}^{d_1}(C) \times 
			\cdots \times \mathrm{Sym}^{d_r}(C)) : 
			d_1+\cdots+ d_r=d  \right\rangle. 
			\end{align}
		Here the order of the above semiorthogonal summands is given by a
		lexicographic order of $(d_1, \ldots, d_r) \in \mathbb{Z}_{\ge 0}^r$. 
		\end{thm}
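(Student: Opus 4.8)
The plan is to realize $\Quot_C(\eE,d)$ as the stable locus of a relative framed one-loop quiver over $C$ and to generate \eqref{intro:sodQ} by variation of GIT, peeling off one symmetric-product factor at a time. A point of $\Quot_C(\eE,d)$ is a pair $(\qQ,s)$ with $\qQ\in\Coh(C)$ zero-dimensional, $\length(\qQ)=d$, and $s\colon\eE\to\qQ$ a surjection; forgetting surjectivity, such pairs form an Artin stack $\mathfrak{M}(\eE,d)$ of framed length-$d$ sheaves, in which the $\mathcal{O}_C$-module structure of $\qQ$ plays the role of the loop and $s$ the framing. The Quot scheme is the open substack cut out by a framing stability $\zeta$, while the \emph{unframed} moduli of length-$d'$ sheaves has good moduli space $\mathrm{Sym}^{d'}(C)$. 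The indexing of \eqref{intro:sodQ} by compositions is already transparent over the locus of reduced support: there $\Quot_C(\eE,d)$ is, up to the $S_d$-symmetrization implicit in $\mathrm{Sym}$, a $(\mathbb{P}^{r-1})^{d}$-bundle over the configuration space, and applying Beilinson's collection (the case $\mathrm{Gr}(r,1)=\mathbb{P}^{r-1}$ of \eqref{intro:Grass}) fibrewise assigns to each of the $d$ points one of the $r$ twists; a composition $(d_1,\dots,d_r)$ records how many points carry each twist, and the divisor of points carrying the $i$-th twist is exactly a point of the factor $\mathrm{Sym}^{d_i}(C)$ in $\mathrm{Sym}^{d_1}(C)\times\cdots\times\mathrm{Sym}^{d_r}(C)$.

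The steps are as follows. First I would choose a surjection $\eE\twoheadrightarrow\lL$ onto a line bundle with kernel $\eE'$ of rank $r-1$, which exists on a curve. Given $\eE\twoheadrightarrow\qQ$, set $\qQ'=\mathrm{im}(\eE'\to\qQ)\subseteq\qQ$; then $\qQ/\qQ'$ is a length-$d_1$ quotient of $\lL$, hence an effective divisor, i.e.\ a point of $\mathrm{Sym}^{d_1}(C)$, while $\qQ'$ is a length-$(d-d_1)$ quotient of $\eE'$. This produces a $\Theta$-stratification of $\mathfrak{M}(\eE,d)$ indexed by $d_1$, whose centers are, up to the residual framing data, $\mathrm{Sym}^{d_1}(C)\times\Quot_C(\eE',d-d_1)$. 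To each stratum I would attach a window subcategory and apply the categorical wall-crossing formula for the framed one-loop quiver, obtaining a semiorthogonal decomposition
\[
D^b(\Quot_C(\eE,d))=\big\langle\, D^b\big(\mathrm{Sym}^{d_1}(C)\times\Quot_C(\eE',d-d_1)\big)\ :\ 0\le d_1\le d\,\big\rangle,
\]
with the order of the blocks dictated by the orientation of the wall-crossing. Finally I would induct on $r$, the base case $r=1$ being $\Quot_C(\lL,d)=\mathrm{Sym}^{d}(C)$; unrolling the recursion yields one block $D^b(\mathrm{Sym}^{d_1}(C)\times\cdots\times\mathrm{Sym}^{d_r}(C))$ per composition $d_1+\cdots+d_r=d$, and composing the orderings (with $d_1$ peeled outermost) produces the lexicographic order asserted in \eqref{intro:sodQ}.

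The main obstacle is the categorical wall-crossing formula itself, globalized over $C$. Over a point this is the familiar statement that crossing a GIT wall for $\GL$ replaces one side's derived category by the other's together with correction blocks supported on the fixed loci and controlled by window (weight) conditions; the difficulty here is to make the windows vary coherently in families over $\mathrm{Sym}^{d_1}(C)$ and to pin down each correction block precisely. The delicate point is that the naive centers carry residual framing directions that a priori contribute Grassmannian-bundle factors, and one must show---using the specific geometry of the loop (the $C$-direction) and the exact window width governed by the relative canonical bundle---that these factors collapse, leaving exactly $D^b(\mathrm{Sym}^{d_1}(C)\times\Quot_C(\eE',d-d_1))$. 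Establishing semiorthogonality and generation of the blocks, and checking independence of the chosen line-bundle quotient of $\eE$, then reduces to standard window and base-change arguments; crucially, the degeneration over the non-reduced support locus, where the fibrewise Beilinson picture of the first paragraph breaks down, is handled uniformly by the wall-crossing formula rather than by the generic heuristic.
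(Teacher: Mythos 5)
Your overall framework---realizing the Quot scheme inside the moduli of framed length-$d$ sheaves, with $\mathrm{Sym}^{d_i}(C)$ appearing as the good moduli space of the unframed length-$d_i$ sheaves, and with windows plus the categorical wall-crossing formula for the framed one-loop quiver doing the work on the non-reduced locus---is the right one and matches the paper's. But your central reduction step is false. You assert the intermediate semiorthogonal decomposition
\begin{align*}
D^b(\Quot_C(\eE,d))=\left\langle D^b\bigl(\mathrm{Sym}^{d_1}(C)\times \Quot_C(\eE',d-d_1)\bigr) : 0\le d_1\le d \right\rangle,
\end{align*}
with $\eE'=\Ker(\eE\twoheadrightarrow\lL)$ of rank $r-1$, and then induct on $r$. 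Take $r=3$ and $d=1$, so $\Quot_C(\eE,1)=\mathbb{P}(\eE)$ and $\Quot_C(\eE',1)=\mathbb{P}(\eE')$; the claim becomes $D^b(\mathbb{P}(\eE))=\langle D^b(\mathbb{P}(\eE')),\, D^b(C)\rangle$. Every category here is linear over $C=\mathrm{Sym}^1(C)$ via the support map, so restricting to a fiber would yield $D^b(\mathbb{P}^2)=\langle D^b(\mathbb{P}^1), D^b(\mathrm{pt})\rangle$, which is false: the orthogonal complement of any exceptional object in $D^b(\mathbb{P}^2)$ is the derived category of the $3$-arrow Kronecker quiver, which is not equivalent to $D^b(\mathbb{P}^1)$ (the $2$-arrow one); the Coxeter transformations on $K_0\cong\mathbb{Z}^2$ already distinguish them. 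Concretely, in the true decomposition $D^b(\mathbb{P}(\eE))=\langle \pi^{\ast}D^b(C), \pi^{\ast}D^b(C)\otimes\oO(1), \pi^{\ast}D^b(C)\otimes\oO(2)\rangle$ the Hom-bimodule between consecutive blocks is $\pi_{\ast}\oO(1)$, locally free of rank $3$, whereas in $D^b(\mathbb{P}(\eE'))$ it has rank $2$. So grouping the summands of (\ref{intro:sodQ}) by their first index $d_1$ does \emph{not} reproduce $D^b(\Quot_C(\eE',d-d_1))$: the gluing data between the blocks remember the full rank $r$, not $r-1$, and no choice of quotient line bundle $\lL$ fixes this.

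This is also why the paper does not induct on the rank of $\eE$. Its wall-crossing is the single variation of GIT on $\gG_{a,b}(d)$ between the $\chi_0$-stable side (the Quot scheme of $\eE$) and the $\chi_0^{-1}$-stable side (the Quot scheme of $\hH=\eE xt^1(\eE,\oO)$, which vanishes for $\eE$ locally free, so that side is empty); all $r$ indices $(d_1,\ldots,d_r)$ appear simultaneously as iterated categorified Hall products $\mathbb{W}(d_1)\ast(\mathbb{W}(d_2)\otimes\chi_0)\ast\cdots\ast(\mathbb{W}(d_r)\otimes\chi_0^{r-1})$ landing in $\mathbb{W}_a(d)$ (Theorem~\ref{thm:SOD}). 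The only induction is on the window width $c$ from $b$ to $a$ (Lemma~\ref{lem:gen}, Proposition~\ref{prop:gen}), and the intermediate categories $\mathbb{W}_c(d)\subset D^b(\gG_{a,b}(d))$ for $b<c<a$ are precisely \emph{not} derived categories of Quot schemes of smaller-rank sheaves---they are proper admissible subcategories of $D^b(G^{+}_{a,b}(d))$ whose generators still compute Homs through the full representation $Y_{a,b}(d)$. The delicate multiplicity-one statement you defer ("the residual Grassmannian factors collapse") is exactly the Borel--Weil--Bott weight computation of Proposition~\ref{prop:Hall}, and the non-reduced support locus is handled by passing to formal fibers over $\mathrm{Sym}^d(C)$, where the local model is the multi-vertex quiver $Q^{(m)}_{a,b}$. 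If you want to keep a recursive structure, the recursion has to run over the window width (equivalently over the power of $\chi_0$ twisting the Hall factors), not over the rank of the bundle.
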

	In~\cite[Proposition~4.5]{BFP}, Bagnarol-Fantechi-Perroni proved the following 
	identity of 
	the class of $\Quot_C(\eE, d)$ in the Grothendieck ring of varieties (also see~\cite{Bifet}): 
	\begin{align*}
		[\Quot_C(\eE, d)]=\sum_{d_1+\cdots+d_r=d}
		[\mathrm{Sym}^{d_1}(C)]\times \cdots \times[\mathrm{Sym}^{d_r}(C)] \times [\mathbb{A}^1]^{l_r}
		\end{align*}
	where $l_r \cneq \sum_{i=1}^r (i-1)d_i$. 
	The semiorthogonal decomposition in Theorem~\ref{intro:thm1} gives a categorical 
	analogue of the above identity. 
	We also refer to~\cite{OpPan} on enumerative geometry 
	related to $\Quot_{C}(\eE, d)$. 
We also note that each semiorthogonal summand
 (\ref{intro:sodQ}) may be further decomposed (see Remark~\ref{rmk:sym}). 
	
	\subsection{Quot formula of relative dimension one}
	The result of Theorem~\ref{intro:thm1} is a special case of 
	the Quot formula of relative dimension one, which is described below. 
	Let $S$ be a smooth quasi-projective scheme 
	and 
	\begin{align*}
		\pi \colon \cC \to S
		\end{align*}
	 be a smooth projective morphism of relative dimension one. 
For $\eE \in \Coh(\cC)$ with rank $r$, we consider the $S$-relative Quot scheme 
\begin{align}\label{intro:quot}
	\Quot_{\cC/S}(\eE, d) \to S
	\end{align}
whose fiber over $s \in S$ 
is the Quot scheme $\Quot_{\cC_s}(\eE_s, d)$
over the curve $\cC_s$. As we do not require $\eE$ to be locally free nor $S$-flat, 
several geometric properties of $\Quot_{\cC_s}(\eE_s, d)$ (e.g. dimension, smoothness)
may depend on $s \in S$. 

Let $\hH \cneq \eE xt^1_{\cC}(\eE, \oO_{\cC})$. 
As a dual side of (\ref{intro:quot}), 
we also consider the $S$-relative Quot scheme 
$\Quot_{\cC/S}(\hH, d) \to S$. 
If we furthermore assume that $\eE$ has 
homological dimension less than or equal to one, 
there exist quasi-smooth derived schemes 
\begin{align*}
	\mathbf{Quot}_{\cC/S}(\eE, d) \to S \leftarrow 
	\mathbf{Quot}_{\cC/S}(\hH, d)
	\end{align*}
whose classical truncations are
$\Quot_{\cC/S}(\eE, d)$, $\Quot_{\cC/S}(\hH, d)$, 
with virtual dimensions $\dim S+rd$, $\dim S-rd$, 
respectively (if non-empty). 
We also denote by 
\begin{align*}
	\mathrm{Sym}^k(\cC/S) \cneq 
	\overbrace{(\cC \times_S \cdots \times_S \cC)}^k /\mathfrak{S}_k \to S
	\end{align*}
the relative symmetric product, whose fiber at $s \in S$ is 
$\mathrm{Sym}^k(\cC_s)$. 
We prove the following: 
\begin{thm}\label{intro:thm2}
	There is a semiorthogonal decomposition of the form 
	\begin{align*}
		&D^b(\mathbf{Quot}_{\cC/S}(\eE, d))
		= \\
		&\left\langle D^b(\mathrm{Sym}^{d_1}(\cC/S) \times_S
		\cdots \times_S \mathrm{Sym}^{d_r}(\cC/S) \times_S
		\mathbf{Quot}_{\cC/S}(\hH, d-\sum_{i=1}^r d_i) ) :
		(d_1, \ldots, d_r)\in \mathbb{Z}_{\ge 0}^r \right\rangle. 
		\end{align*}
	\end{thm}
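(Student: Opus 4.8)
The plan is to realize $\mathbf{Quot}_{\cC/S}(\eE, d)$ and $\mathbf{Quot}_{\cC/S}(\hH, d)$ as the two GIT chambers of a single quasi-smooth quotient stack attached to the framed one-loop quiver, and then to read off the decomposition from a categorical wall-crossing argument whose correction terms between the two chambers are exactly the symmetric products. To set up the local model I use that $\eE$ has homological dimension $\le 1$ to choose (locally on $S$, and globally after a resolution) a two-term locally free resolution $0 \to F_1 \to F_0 \to \eE \to 0$, so that $\hH = \Cok(F_0^\vee \to F_1^\vee)$. A length-$d$ zero-dimensional quotient of $\eE$ along the fibres of $\pi$ amounts, over the relative symmetric product, to a length-$d$ module $(V, x)$ for the fibrewise coordinate (the nilpotent loop $x$) together with a framing $s$ from $F_0$ recording the generators of $\eE$, subject to the cyclicity condition that $s$ generates $V$ under $x$. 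This exhibits $\mathbf{Quot}_{\cC/S}(\eE, d)$ as a derived GIT quotient $[\fX^{\theta_+}/\GL_d]$ of a quasi-smooth derived scheme $\fX$ over $\mathrm{Sym}^d(\cC/S)$, whose Koszul-type derived structure records the resolution of $\eE$; the opposite linearization $\theta_-$ produces $\mathbf{Quot}_{\cC/S}(\hH, d)$, the appearance of $\hH = \eE xt^1_{\cC}(\eE, \oO_{\cC})$ being forced by fibrewise Serre duality, which transposes the framing relation.

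Next I would apply the window (magic window) and $\Theta$-stratification technology to $[\fX/\GL_d]$. The instability strata for one-parameter subgroups of $\GL_d$ correspond to weight splittings $V = V_1 \oplus \cdots$; in the presence of the rank-$r$ framing they are indexed by $(d_1, \ldots, d_r) \in \mathbb{Z}_{\ge 0}^r$, obtained by peeling cyclic framed submodules of lengths $d_1, \ldots, d_r$ off the $r$ framing directions. The quantization theorem identifies a window subcategory with $D^b$ of each GIT quotient, and an appropriate choice of window widths yields a semiorthogonal decomposition whose ordered summands are indexed by these strata, the Hilbert--Mumford weights placing them in the asserted lexicographic order. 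This is the categorical wall-crossing formula for the framed one-loop quiver.

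I would then identify each summand as a categorical Hall-type product. On the $(d_1, \ldots, d_r)$-stratum the associated graded splits a length-$d_i$ cyclic framed piece off each framing line, and a single cyclic torsion sheaf on a fibre is precisely an effective divisor, hence parameterized by $\mathrm{Sym}^{d_i}(\cC/S)$; the unframed remainder of length $d - \sum_i d_i$ lands in the dual chamber and is identified, by the same fibrewise duality as above, with $\mathbf{Quot}_{\cC/S}(\hH, d - \sum_i d_i)$. In particular the stratum $(0, \ldots, 0)$ recovers $\mathbf{Quot}_{\cC/S}(\hH, d)$ itself as one semiorthogonal component, while the strata with $\sum_i d_i = d$ give the pure symmetric-product summands of Theorem~\ref{intro:thm1}. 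Relativizing over $S$ and assembling over all strata gives the stated decomposition.

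I expect the main difficulty to be this last identification carried out uniformly with the first step: making the derived GIT presentation valid globally when $\eE$ is neither locally free nor $S$-flat, so that fibres of varying dimension and singularity type are handled at once, and then matching the $\Theta$-fixed loci on the nose with $\mathrm{Sym}^{d_1}(\cC/S) \times_S \cdots \times_S \mathrm{Sym}^{d_r}(\cC/S) \times_S \mathbf{Quot}_{\cC/S}(\hH, d - \sum_i d_i)$ as quasi-smooth derived stacks rather than merely classically. The accompanying semiorthogonality, i.e. the vanishing of $\Ext$ groups between distinct windows, must be extracted from weight bounds on the relative conormal complexes of the strata, and together with fullness of the collection this weight bookkeeping is the technical heart of the argument.
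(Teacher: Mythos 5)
Your global architecture matches the paper's: a two-term resolution $\eE^{-1}\to\eE^0\to\eE$ produces a linear model (the paper's $\nN_{\cC/S}(\eE^{\bullet},d)=\mM_{\cC/S}(\eE^0,d)\times_{\mM_{\cC/S}(d)}\mM_{\cC/S}(\eE^{-1},d)^{\vee}$ with superpotential $w$, related to the derived Quot schemes by Koszul duality rather than by a bare derived GIT presentation), the two linearizations $\lL^{\pm 1}$ give $\mathbf{Quot}_{\cC/S}(\eE,d)$ and $\mathbf{Quot}_{\cC/S}(\hH,d)$ via window subcategories, and everything is checked formally locally over $\mathrm{Sym}^d(\cC/S)$ where the model becomes the framed one-loop quiver $\gG^{(m)}_{a,b}(d^{(\ast)})$. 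But there is a genuine gap at the central step. You claim the summands indexed by $(d_1,\ldots,d_r)$ are read off from the $\Theta$-stratification, with the Hilbert--Mumford weights of the strata supplying both the indexing and the semiorthogonality. That is not how this wall-crossing works: the KN strata of $\gG_{a,b}(d)$ for $\chi_0^{\pm 1}$ are indexed by a single integer $i$ (the dimension of the submodule generated by the framing), not by $r$-tuples, and this wall-crossing does not satisfy the "simple" (DHT-type) condition under which the complement of $\mathbb{W}_b(d)$ in $\mathbb{W}_a(d)$ can be extracted stratum by stratum. The paper is explicit that the full faithfulness of $D^b(G^-_{a,b}(d))\hookrightarrow D^b(G^+_{a,b}(d))$ is the easy part and that describing the complement requires a different mechanism: the categorified Hall product $\mathbb{W}(k)\ast(\mathbb{W}_{c-1}(d-k)\otimes\chi_0)\subset\mathbb{W}_c(d)$, iterated $r=a-b$ times as $c$ descends from $a$ to $b$ (this iteration, not "$r$ framing directions", is where the $r$-tuples come from), with generation and semiorthogonality established by a Borel--Weil--Bott computation of $p_{\lambda\ast}q_{\lambda}^{\ast}(V(\chi)\otimes\oO)$ and a Young-diagram analysis of the resulting highest weights (Propositions~\ref{prop:resol}, \ref{prop:Hall}, Theorem~\ref{thm:SOD}). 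Your proposal contains no substitute for this analysis, and the weight bounds on conormal bundles of strata that you invoke will not produce it.

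A second, smaller inaccuracy: the factors $\mathrm{Sym}^{d_i}(\cC/S)$ do not enter as fixed loci parameterizing "cyclic framed pieces" (effective divisors). They enter through the unframed Hall product factors $\mM_{\cC/S}(d_i)$, the stacks of \emph{all} length-$d_i$ zero-dimensional sheaves, via the quasi-BPS-type subcategory $\mathbb{W}(d_i)\subset D^b(\mM_{\cC/S}(d_i))$ generated by the structure sheaf, which is equivalent to $D^b(\mathrm{Sym}^{d_i}(\cC/S))$ by pullback along the good moduli space morphism (Lemmas~\ref{lem:genO}, \ref{lem:pull}). Consequently there is no need to match $\Theta$-fixed loci with $\mathrm{Sym}^{d_1}(\cC/S)\times_S\cdots\times_S\mathbf{Quot}_{\cC/S}(\hH,d-\sum d_i)$ as derived stacks: the Hall product is defined on the classical smooth stack of short exact sequences, and the only derived input sits in the $\MF$ side handled by Koszul duality. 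Your last paragraph therefore identifies the wrong technical bottleneck; the real one is the combinatorics of the window generators under the Hall product.
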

	
	When $\cC=S$, 
	the following Quot formula 
	is conjectured in~\cite{JiangQuot} and proved in~\cite{ToQuot}:
		\begin{align}\label{quot:formula}
		D^b(\mathbf{Quot}_{S}(\eE, d))=
		\left\langle \binom{r}{k}\mbox{-copies of }
		D^b(\mathbf{Quot}_{S}(\hH, d-k)) : 
		0\le k\le \mathrm{min}\{d, r\}  \right\rangle. 		
	\end{align}
The result of Theorem~\ref{intro:thm2} gives a 
relative dimension one analogue of the above 
semiorthogonal decomposition. 
Note that Theorem~\ref{intro:thm1} is obtained from Theorem~\ref{intro:thm2}
by taking $S$ to be the point and $\eE$ to be a vector bundle. 
In some sense, 
the semiorthogonal decomposition in Theorem~\ref{intro:thm2} may be 
regarded as a bosonic counterpart of the semiorthogonal decomposition (\ref{quot:formula})
(see Remark~\ref{rmk:boson}).

\subsection{Categorical wall-crossing formula for framed one loop quiver}	
The proof of Theorem~\ref{intro:thm2} is similar to the 
proof of the formula (\ref{quot:formula}) in~\cite{ToQuot}. 
Namely we construct relevant functors using global 
categorified Hall products, and reduce the problem to 
a local situation. 
In~\cite{ToQuot}, the required local statement 
is semiorthogonal decomposition under Grassmannian flip 
proved in~\cite{Toconi}. 
In the case of Theorem~\ref{intro:thm2}, 
the required local statement is 
semiorthogonal decomposition under 
wall-crossing of framed one loop quivers. 

We denote by $Q$ the quiver with one vertex $\{1\}$ and one loop. 
For $a, b \in \mathbb{Z}_{\ge 0}$ with 
$r \cneq a-b \ge 0$, 
we denote by $\qQ_{a, b}$ the quiver with 
vertices $\{\infty, 1\}$, 
where there are $a$-arrows from $\infty$ to $1$, 
$b$-arrows from $1$ to $\infty$: 
	\[
		Q=
	\begin{tikzcd}
		\bullet_{1}
		\arrow[out=50, in=310, loop]
	\end{tikzcd}   \quad  
	Q_{2, 1}=
	\begin{tikzcd}
		\bullet_{\infty}
		\arrow[r,bend left]
		\arrow[r,bend left=50] &
		\bullet_1
		\arrow[out=50, in=310, loop]
		\arrow[l, bend left]
	\end{tikzcd}
	\]
For $d \in \mathbb{Z}$, we denote by 
$\mM_{Q}(d)$ the moduli stack of $Q$-representations with 
dimension vector $d$, 
$\gG_{a, b}(d)$ the $\C$-rigidified 
moduli stack of $Q_{a, b}$-representations 
with dimension vector $(1, d)$.
There are two GIT stable loci $G_{a, b}^{\pm}(d) 
\subset \gG_{a, b}(d)$ with respect to 
the characters $\chi_0^{\pm}$, 
where $\chi_0 \colon \mathrm{GL}(d) \to \mathbb{C}^{\ast}$ is the determinant 
chraacter. 
They are 
related by a flip $(a>b)$, flop $(a=b)$
\begin{align*}
	\xymatrix{
	G_{a, b}^+(d) \ar[rd] \ar@{.>}[rr] & &  G_{a, b}^-(d) \ar[ld] \\
	& G_{a, b}(d). &
}
	\end{align*} 
Here $\gG_{a, b}(d) \to G_{a, b}(d)$ is the good moduli space. 
The D/K principle by Bondal-Orlov~\cite{B-O2} and Kawamata~\cite{Ka1}
predicts a fully-faithful functor 
$D^b(G_{a, b}^-(d)) \hookrightarrow D^b(G_{a, b}^+(d))$. 
This is not difficult to prove, but 
we need more: we need to 
describe its semiorthogonal complements
in terms of $D^b(G^-_{a, b}(d'))$ for $d'<d$.
Similarly to the argument in~\cite{Toconi}, we 
prove it using categorified Hall products
introduced in~\cite{Tudor1.5}. 
It is a functor 
\begin{align}\label{intro:catHA}
	\ast \colon 
	D^b(\mM_{Q}(d_1)) \boxtimes D^b(\gG_{a, b}(d_2)) \to 
	D^b(\gG_{a, b}(d_1+d_2)).
	\end{align}	
The above functor is defined by the stack of short exact sequences of 
$Q_{a, b}$-representations, and categorifies cohomological Hall 
algebras in~\cite{MR2851153}. 

We will use the following two subcategories for $c \in \mathbb{Z}$:
\begin{align}\label{intro:window}
	\mathbb{W}(d) \subset D^b(\mM_Q(d)), \ 
	\mathbb{W}_c(d) \subset D^b(\gG_{a, b}(d)). 
	\end{align}
The first one is the subcategory introduced in~\cite{Tudor1.7}, 
which gives an approximation of BPS sheaf in 
the context of cohomological DT theory in~\cite{DaMe} (in~\cite{PTzero}, a similar 
category for the triple loop quiver with a super-potential is called a a quasi-BPS category). 
For a general symmetric quiver it seems difficult to investigate 
it, but in our situation of the quiver $Q$ 
the structure of $\mathbb{W}(d)$ is very simple: 
it is just generated by $\oO_{\mM_Q(d)}$ and equivalent to 
$D^b(M_Q(d))$
for the good moduli space 
\begin{align*}
	\mM_Q(d) \to M_Q(d)=\mathrm{Sym}^d(\mathbb{A}^1) \cong \mathbb{A}^d.
	\end{align*}
The second one is a window subcategory 
introduced and studied in~\cite{MR3327537, MR3895631, HLKSAM}. 	
It has the property that the composition functors
\begin{align*}
	\mathbb{W}_c(d) \hookrightarrow D^b(\gG_{a, b}(d)) \twoheadrightarrow 
	D^b(G_{a, b}^{\pm}(d))
	\end{align*}
are equivalences for $(c=a, +)$, $(c=b, -)$. 
The definitions of the subcategories (\ref{intro:window}) are similar, 
but because of the generic stabilizers of $\mM_Q(d)$
their play different roles in this paper. 

We show that, for each $(d_1, \ldots, d_r) \in \mathbb{Z}_{\ge 0}^r$, 
the categorified Hall product 
induces the fully-faithful functor 
\begin{align*}
	\ast \colon 
	\mathbb{W}(d_1) \boxtimes (\mathbb{W}(d_2) \otimes \chi_0)
	\boxtimes \cdots \boxtimes (\mathbb{W}(d_l) \otimes \chi_0^{r-1})
	\boxtimes (\mathbb{W}_b(d-\sum_{i=1}^r d_i) \otimes \chi_0^r)
	\to \mathbb{W}_a(d)
\end{align*}
whose essential images form a semiorthogonal decomposition
(see Theorem~\ref{thm:SOD}). 
In particular, we have the following: 
\begin{thm}
\emph{(Corollary~\ref{cor:sod})}\label{intro:thmone}
	There is a semiorthogonal decomposition 
	of the form
\begin{align*}
	D^b(G_{a, b}^+(d))
	=\left\langle D^b(M_Q(d_1)) \boxtimes \cdots 
	\boxtimes D^b(M_Q(d_r)) \boxtimes D^b(G_{a, b}^-(d-\sum_{i=1}^r d_i)) : (d_1, \ldots, d_r) \in \mathbb{Z}_{\ge 0}^r \right\rangle. 
\end{align*}
\end{thm}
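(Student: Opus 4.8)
The plan is to obtain this as an immediate consequence of Theorem~\ref{thm:SOD}, whose content already supplies the fully faithful functors and the semiorthogonal decomposition; what remains is to translate the window categories into the geometric categories of the claim. Recall that Theorem~\ref{thm:SOD} asserts that the categorified Hall products
\begin{align*}
	\ast \colon
	\mathbb{W}(d_1) \boxtimes (\mathbb{W}(d_2) \otimes \chi_0)
	\boxtimes \cdots \boxtimes (\mathbb{W}(d_r) \otimes \chi_0^{r-1})
	\boxtimes (\mathbb{W}_b(d-\sum_{i=1}^r d_i) \otimes \chi_0^r)
	\to \mathbb{W}_a(d)
\end{align*}
are fully faithful for each $(d_1, \ldots, d_r)$, and that their essential images form a semiorthogonal decomposition of $\mathbb{W}_a(d)$ indexed by $(d_1,\ldots,d_r) \in \mathbb{Z}_{\ge 0}^r$ with $\sum_i d_i \le d$. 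So it suffices to replace $\mathbb{W}_a(d)$, the loop factors $\mathbb{W}(d_i)$, and the tail $\mathbb{W}_b(d-\sum_i d_i)$ by their geometric models, and to check that the character twists and the ordering survive the translation.

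First I would transport the decomposition along the equivalence $\mathbb{W}_a(d) \simeq D^b(G_{a,b}^+(d))$ coming from the composition $\mathbb{W}_a(d) \hookrightarrow D^b(\gG_{a,b}(d)) \twoheadrightarrow D^b(G_{a,b}^+(d))$, which is an equivalence for the pair $(c=a,+)$; since an equivalence of triangulated categories carries a semiorthogonal decomposition of its source to one of its target with the same index set and order, this yields a semiorthogonal decomposition of $D^b(G_{a,b}^+(d))$. Next I would identify the summands factor by factor, using $\mathbb{W}(d_i) \simeq D^b(M_Q(d_i))$ (with $M_Q(d_i) \cong \mathbb{A}^{d_i}$) for each loop factor and $\mathbb{W}_b(d-\sum_i d_i) \simeq D^b(G_{a,b}^-(d-\sum_i d_i))$ for the pair $(c=b,-)$ for the tail. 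Because the relevant good moduli spaces are smooth (indeed $M_Q(d_i)$ is affine space and $G_{a,b}^-$ is a smooth GIT quotient), the external product of these equivalences is again an equivalence, rewriting each summand as $D^b(M_Q(d_1)) \boxtimes \cdots \boxtimes D^b(M_Q(d_r)) \boxtimes D^b(G_{a,b}^-(d-\sum_i d_i))$, precisely the summands of the claim.

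Finally I would dispose of the twists by powers of the determinant character: for each $j$, tensoring by $\chi_0^{j}$ is an autoequivalence of $D^b(\mM_Q(d_i))$ (respectively of $D^b(\gG_{a,b}(\cdot))$) sending $\mathbb{W}(d_i)$ to $\mathbb{W}(d_i)\otimes\chi_0^{j}$, so each twisted factor is abstractly equivalent to its untwisted version, and likewise for the tail; hence the twists merely relabel summands and alter neither the list of categories nor their mutual order. Since the heavy lifting is contained in Theorem~\ref{thm:SOD}, there is no serious obstacle here; the only point demanding care is bookkeeping---verifying that the ordering of summands produced by Theorem~\ref{thm:SOD} is respected by these identifications, and that the index range is exactly $\{(d_1,\ldots,d_r): \sum_i d_i \le d\}$ so that the tail weight $d-\sum_i d_i$ stays non-negative. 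Granting this, the corollary follows.
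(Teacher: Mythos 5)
Your proposal is correct and follows essentially the same route as the paper, whose proof of Corollary~\ref{cor:sod} is precisely the combination of Theorem~\ref{thm:SOD} (with $c=a$, $l=r$), Proposition~\ref{prop:WGD} for the identifications $\mathbb{W}_a(d)\simeq D^b(G_{a,b}^+(d))$ and $\mathbb{W}_b(d-\sum d_i)\simeq D^b(G_{a,b}^-(d-\sum d_i))$, and Lemma~\ref{lem:pull} for $\mathbb{W}(d_i)\simeq D^b(M_Q(d_i))$. Your extra bookkeeping about the $\chi_0$-twists being relabelling autoequivalences and about the index range (summands with $\sum d_i>d$ vanishing) is accurate and just makes explicit what the paper leaves implicit.
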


In~\cite{Toconi}, a categorical wall-crossing formula for 
framed zero-loop quivers is applied to give a categorical wall-crossing 
formula for the resolved conifold. 
In~\cite{PTzero}, a categorical wall-crossing formula 
for framed triple-loop quiver is obtained 
to give a categorical wall-crossing formula for $\mathbb{C}^3$. 
The result of Theorem~\ref{intro:thmone}
can be also used to give a categorical wall-crossing formula 
in other situations, e.g. 
a CY 3-fold which contracts a divisor to a curve.

	\subsection{Acknowledgements}
	The author is grateful to
	Tudor P\u adurariu for a joint work~\cite{PTzero}, whose subject 
	is related to this paper. 
The author is supported by World Premier International Research Center
Initiative (WPI initiative), MEXT, Japan, and Grant-in Aid for Scientific
Research grant (No.~19H01779) from MEXT, Japan.	
	
\subsection{Notation and convention}
In this paper, all the schemes and (derived) stacks are 
defined over $\mathbb{C}$. 
For a derived stack $\xX$, 
we denote by $D^b(\xX)$ the homotopy category of 
$\infty$-category of quasi-coherent sheaves on $\xX$
with coherent cohomologies. 
For a scheme $S$ and derived stacks 
$\xX_i \to S$ over $S$ for $i=1, 2$, 
we denote by $D^b(\xX_1)\boxtimes_S D^b(\xX_2)=
D^b(\xX_1 \times_S \xX_2)$. 	
For triangulated subcategories
$\cC_i \subset D^b(\xX_i)$, 
we denote by $\cC_1 \boxtimes_S \cC_2$ the 
triangulated subcategory 
of $D^b(\xX_1)\boxtimes_S D^b(\xX_2)$
split generated by $C_1 \boxtimes C_2$ for $C_i \in \cC_i$. 
For a morphism of schemes $T \to S$
and an Artin stack $\xX \to S$, 
we write $\xX_T \cneq \xX \times_S T$. 
For a perfect complex $\eE$ on $\xX$, we denote by 
$\eE_T$ its pull-back to $\xX_T$. 

Let $S=\Spec R$ for a complete local $\mathbb{C}$-algebra $R$, 
and $T_i=\Spec A_i$ for complete local $R$-algebras $A_i$ for $i=1, 2$. 
We denote by $A_1 \widehat{\otimes}_R A_2$ the complete 
tensor product, and 
write $T_1 \widehat{\times}_R T_2 \cneq \Spec (A_1 \widehat{\otimes}_R A_2)$. 
For derived stacks $\xX_i \to T_i$ over $T_i$, 
we denote by $\xX_1 \widehat{\times}_R \xX_2 \to T_1 \widehat{\times}_R T_2$ the pull-back of 
$\xX_1 \times_R \xX_2 \to T_1 \times_R T_2$
via $T_1 \widehat{\times}_R T_2 \to T_1 \times_R T_2$. 
We denote by $D^b(\xX_1)\widehat{\boxtimes}_R D^b(\xX_2)
=D^b(\xX_1 \widehat{\times}_R \xX_2)$. 
The triangulated subcategory 
$\cC_1 \widehat{\boxtimes}_R \cC_2$
in $D^b(\xX_1)\widehat{\boxtimes}_R D^b(\xX_2)$
is defined to be split
generated by $(C_1 \boxtimes C_2)|_{\xX_1 \widehat{\times}_R \xX_2 }$
for $C_i \in \cC_i$. 
These notation also apply to categories of ($\C$-equivariant) 
factorizations in 
Subsection~\ref{subsec:MF} in an obvious way. 

	Let $G$ be a reductive algebraic group with maximal torus $T$.
We denote by 
$M$ the character lattice of $T$ and $N$ the cocharacter lattice
of $T$. For a $G$-representation $Y$, 
we denote by $\wt_T(Y) \subset M$ the set of $T$-weights of $Y$. 
There is a perfect pairing 
\begin{align*}
	\langle -, -\rangle \colon N \times M \to \mathbb{Z}. 
\end{align*}	
The Weyl group of $G$ is denoted by $W$, and 
$M^W \subset M$ is defined to be the fixed part of $W$-action on $M$. 
We fix a Borel subgroup $B \subset G$ and set roots of $B$ to be negative roots. 
We denote by $M^+ \subset M$ the dominant chamber, and 
for $\chi \in M^+$ we denote by
 $V(\chi)$ the irreducible $G$-representation with 
highest weight $\chi$. We also define $\rho \in M_{\mathbb{Q}}$ to be the half sum of positive roots. 
For $\chi \in M$ and $w \in W$, define 
\begin{align*}
	w \ast \chi \cneq w(\chi+\rho)-\rho. 
\end{align*}
If $\chi+\rho$ has a trivial stabilizer in $W$, 
there is a unique $w \in W$ such that 
$w \ast \chi \in M^+$, and in that case we set 
$\chi^+ \cneq w \ast \chi$.
Otherwise we set $V(\chi^+)$ to be zero.

	\section{Preliminary}

	\subsection{Attracting loci}
Let $Y$ be a smooth affine scheme with an action of a reductive 
algebraic group $G$. 
	For a one parameter subgroup $\lambda \colon \C \to G$, 
	let $Y^{\lambda \ge 0}$, $Y^{\lambda=0}$ be defined by 
	\begin{align*}
		Y^{\lambda \ge 0} &\cneq \{y \in Y: 
		\lim_{t\to 0} \lambda(t)(y) \mbox{ exists }\}, \\
		Y^{\lambda = 0} &\cneq \{y \in Y: 
		\lambda(t)(y)=y \mbox{ for all } t \in \C\}. 
	\end{align*}
	The Levi subgroup and the parabolic subgroup
	\begin{align*}
		G^{\lambda=0} \subset G^{\lambda \ge 0} \subset G
	\end{align*}
	are also similarly defined by the conjugate $G$-action on $G$, i.e. 
	$g\cdot (-)=g(-)g^{-1}$. The $G$-action on 
	$Y$ restricts to the $G^{\lambda \ge 0}$-action 
	on $Y^{\lambda \ge 0}$, 
	and the $G^{\lambda=0}$-action on $Y^{\lambda=0}$. 
	We note that
	$\lambda$ factors through 
	$\lambda \colon \C \to G^{\lambda=0}$, and it
	acts on $Y^{\lambda=0}$ trivially. 
	So we have the decomposition into fixed 
	$\lambda$-weight subcategories
	\begin{align*}
		D^b([Y^{\lambda=0}/G^{\lambda=0}])=
		\bigoplus_{j \in \mathbb{Z}}D^b([Y^{\lambda=0}/G^{\lambda=0}])_{\lambda \mathchar`- \wt=j}.
	\end{align*}
	We have the diagram of 
	attracting loci
	\begin{align}\label{dia:attract0}
		\xymatrix{
			[Y^{\lambda \ge 0}/G^{\lambda \ge 0}] \ar[r]^-{p_{\lambda}}	 \ar[d]^-{q_{\lambda}}	&
			[Y/G] \\
			[Y^{\lambda=0}/G^{\lambda=0}]\ar@/^10pt/[u]^-{\sigma_{\lambda}}. & 	
		}
	\end{align}
	Here $p_{\lambda}$ is induced by the inclusion 
	$Y^{\lambda \ge 0} \subset Y$, and $q_{\lambda}$ is given by 
	taking the $t \to 0$ limit of the action of $\lambda(t)$
	for $t \in \C$. The morphism $\sigma_{\lambda}$ is a section of 
	$q_{\lambda}$ induced by inclusions $Y^{\lambda=0} \subset Y^{\lambda \ge 0}$
	and $G^{\lambda=0} \subset G^{\lambda \ge 0}$. 
	We will use the following lemma: 
	\begin{lem}\label{lem:vanish}\emph{(\cite[Corollary~3.17, Amplification~3.18]{MR3327537})}
		
		(i)
		For $\eE_i \in D^b([Y^{\lambda \ge 0}/G^{\lambda \ge 0}])$
		with $i=1, 2$, suppose that 
		\begin{align*}
			\sigma_{\lambda}^{\ast}\eE_1 \in D^b([Y^{\lambda=0}/G^{\lambda=0}])_{\lambda \mathchar`- \wt\ge j}, \ \sigma_{\lambda}^{\ast}\eE_2 \in D^b([Y^{\lambda=0}/G^{\lambda=0}])_{\lambda \mathchar`- \wt <j}
		\end{align*}
		for some $j$. 
		Then $\Hom(\eE_1, \eE_2)=0$. 
		
		(ii) For $j \in \mathbb{Z}$, the functor 
		\begin{align*}
			q_{\lambda}^{\ast} \colon 
			D^b([Y^{\lambda=0}/G^{\lambda=0}])_{\lambda  \mathchar`- \wt=j}
			\to D^b([Y^{\lambda \ge 0}/G^{\lambda \ge 0}])
		\end{align*} 
		is fully-faithful. 
		\end{lem}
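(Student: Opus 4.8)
The plan is to exploit the fact that $q_\lambda$ exhibits the attracting stack as a vector-bundle stack over the fixed stack, and then reduce everything to weight bookkeeping on the fixed stack, where $\lambda$ is central and acts trivially. Write $\aA \cneq [Y^{\lambda\ge 0}/G^{\lambda\ge 0}]$ and $\fF \cneq [Y^{\lambda=0}/G^{\lambda=0}]$. Since $Y$ is smooth and affine, $Y^{\lambda\ge 0}\to Y^{\lambda=0}$ is a $G^{\lambda=0}$-equivariant affine bundle whose fibre directions all have strictly positive $\lambda$-weight (the attracting directions), and $G^{\lambda\ge 0}=G^{\lambda=0}\ltimes U$ with $U=G^{\lambda>0}$ unipotent of positive weight. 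Hence $q_\lambda\colon \aA\to\fF$ behaves (after passing to the associated graded, or the formal neighbourhood of $\sigma_\lambda$) as a vector-bundle stack with positive $\lambda$-weights, with $\sigma_\lambda$ the zero section. By the projection formula $\dR q_{\lambda\ast}\oO_{\aA}$ computes to $\Sym^\bullet(\nN^\vee)$, where $\nN^\vee$ is the conormal of $\sigma_\lambda$; this is concentrated in $\lambda$-weights $\le 0$ with weight-$0$ part $\oO_{\fF}$. On $\fF$ the cocharacter $\lambda$ is central and acts trivially on $Y^{\lambda=0}$, so every $\RHom$ on $\fF$ splits as a direct sum over $\lambda$-weights.

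For (ii): the relation $q_\lambda\circ\sigma_\lambda=\id$ gives $\sigma_\lambda^\ast q_\lambda^\ast\cong\id$, so $q_\lambda^\ast$ is automatically a split injection on objects. To upgrade this to full faithfulness I would compute, for $A,B\in D^b(\fF)_{\lambda\mathchar`-\wt=j}$,
\[
\RHom_{\aA}(q_\lambda^\ast A, q_\lambda^\ast B)\cong \RHom_{\fF}(A, B\otimes \dR q_{\lambda\ast}\oO_{\aA})
\]
by adjunction together with the projection formula. Since $B$ is pure of weight $j$ and $\dR q_{\lambda\ast}\oO_{\aA}$ lies in weights $\le 0$ with weight-$0$ part $\oO_{\fF}$, the weight-$j$ part of $B\otimes \dR q_{\lambda\ast}\oO_{\aA}$ is exactly $B$. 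As $A$ is pure of weight $j$ and $\RHom_{\fF}$ respects the weight grading, only this summand contributes, yielding $\RHom_{\fF}(A,B)$. Hence $q_\lambda^\ast$ is fully faithful on each fixed-weight component.

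For (i): I would first reduce to the case of pullbacks. The affine-bundle structure equips $D^b(\aA)$ with a weight (baric) filtration, compatible with $\sigma_\lambda^\ast$, in which the full subcategory of objects with $\sigma_\lambda^\ast$ supported in weights $\ge j$ is generated, under cones and the relevant completeness, by the pullbacks $q_\lambda^\ast\gG$ with $\gG\in D^b(\fF)_{\lambda\mathchar`-\wt\ge j}$ (and symmetrically for $<j$). Granting this, the vanishing reduces to
\[
\RHom_{\aA}(q_\lambda^\ast\gG_1, q_\lambda^\ast\gG_2)\cong \RHom_{\fF}(\gG_1, \gG_2\otimes \dR q_{\lambda\ast}\oO_{\aA}).
\]
Because $\dR q_{\lambda\ast}\oO_{\aA}$ is concentrated in weights $\le 0$ and $\gG_2$ is supported in weights $<j$, the tensor product $\gG_2\otimes \dR q_{\lambda\ast}\oO_{\aA}$ is again supported in weights $<j$; since $\gG_1$ is supported in weights $\ge j$ and $\RHom_{\fF}$ is weight-graded, the groups vanish. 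Combined with the reduction this gives $\RHom(\eE_1,\eE_2)=0$, in particular $\Hom(\eE_1,\eE_2)=0$.

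The main obstacle is precisely the reduction step in (i): making it rigorous that the weight condition on the single restriction $\sigma_\lambda^\ast\eE_i$ controls $\RHom$ over the whole attracting stack $\aA$, i.e. constructing the weight/baric filtration of $D^b(\aA)$ and verifying it is generated by $q_\lambda^\ast$ of weight-bounded objects. This is where the non-abelian unipotent factor $U$ must be handled carefully, and where a Koszul resolution of $\sigma_{\lambda\ast}\oO_{\fF}$ together with a Nakayama-type (minimal resolution) argument along the positively-weighted fibres of $q_\lambda$ is needed. Once this structural input is in place, both parts become pure weight bookkeeping on $\fF$, exactly as in \cite{MR3327537}.
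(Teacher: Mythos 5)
The paper offers no proof of this lemma at all: it is quoted verbatim from Halpern-Leistner \cite[Corollary~3.17, Amplification~3.18]{MR3327537}, so the only meaningful comparison is with the argument in that reference. Your sketch reconstructs that argument faithfully. Part (ii) as you present it is complete and correct: $q_\lambda$ is affine, $\dR q_{\lambda\ast}\oO_{\aA}$ is concentrated in $\lambda$-weights $\le 0$ with weight-zero part $\oO_{\fF}$, and since $\lambda$ is central in $G^{\lambda=0}$ and acts trivially on $Y^{\lambda=0}$ the $\RHom$ on $\fF$ splits by weight, so adjunction plus the projection formula gives $\RHom_{\aA}(q_\lambda^\ast A, q_\lambda^\ast B)\cong\RHom_{\fF}(A,B)$ for $A,B$ pure of weight $j$. (One small caveat: $\dR q_{\lambda\ast}\oO_{\aA}\cong\Sym^\bullet$ of the positively-weighted conormal directions is of infinite rank, hence not coherent, so the adjunction should be read in $D_{\mathrm{qc}}$; this is harmless because each fixed $\lambda$-weight piece is coherent and only finitely many weights interact with a bounded pure object.)

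The one genuine gap is exactly the one you name in part (i): the claim that an object $\eE$ with $\sigma_\lambda^\ast\eE$ concentrated in weights $\ge j$ lies in the subcategory generated by $q_\lambda^\ast$ of weight-$\ge j$ objects (and dually for $<j$). This is not a formality — it is precisely the existence of the baric decomposition of $D^b(\aA)$, which in \cite{MR3327537} is established before Corollary~3.17 by filtering an equivariant coherent sheaf on $Y^{\lambda\ge 0}$ by the $\lambda$-weights of its restriction to $Y^{\lambda=0}$ and using a Koszul resolution along the positively-weighted normal directions together with a graded Nakayama argument to show the filtration terminates and its graded pieces are pullbacks. Your proposal identifies this step, names the right tools, and correctly observes that once it is in place the vanishing in (i) reduces to the same weight bookkeeping as (ii); but as written the step is asserted rather than proved, so the proof of (i) is incomplete without importing that structural result from the reference. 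Since the paper itself simply cites it, this is an acceptable resolution, but it should be stated as a citation rather than left as an acknowledged obstacle.
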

		\subsection{Kempf-Ness stratification}\label{subsec:exam:KN}
	Here review Kempf-Ness stratifications associated with GIT quotients of 
	reductive algebraic groups
	following the convention of~\cite[Section~2.1]{MR3327537}. 
	Let $Y$ and $G$ be as in the previous subsection. 
	For an element 
	$l \in 
	\Pic([Y/G])_{\mathbb{R}}$, 
	we have the open subset of $l$-semistable points
	\begin{align}\notag
		Y^{l\sss} \subset Y
	\end{align}
	characterized 
	by the set of points $y \in Y$ such that 
	for any one parameter subgroup $\lambda \colon \mathbb{C}^{\ast} \to G$
	such that 
	the limit $z=\lim_{t\to 0}\lambda(t)(y)$
	exists in $Y$, we 
	have $\wt(l|_{z})\ge 0$. 
	Let $\lvert \ast \rvert$ be the Weyl-invariant norm 
	on $N_{\mathbb{R}}$. 
	The above subset of $l$-semistable 
	points fits into the \textit{Kempf-Ness (KN) stratification} 
	\begin{align}\label{KN:strata}
		Y=S_{1} \sqcup S_{2} \sqcup \cdots \sqcup S_N \sqcup 
		Y^{l\sss}.  
	\end{align}
	Here for each $1\le i\le N$ there exists a 
	one parameter subgroup $\lambda_{i} \colon \mathbb{C}^{\ast} \to
	T \subset G$, an open and closed subset 
	$Z_{i}$ of
	$(Y \setminus \cup_{i'<i} S_{i'})^{\lambda_{i}=0}$
	(called \textit{center} of $S_i$)
	such that 
	\begin{align*}
		S_{i}=G \cdot Y_{i}, \ 
		Y_{i}\cneq \{ y \in Y^{\lambda_{i} \ge 0}: 
		\lim_{t \to 0}\lambda_{i}(t)(y) \in Z_{i}\}. 
	\end{align*}
	Moreover by setting the slope to be
	\begin{align}\label{slope}
		\mu_{i} \cneq -\frac{
			\wt(l|_{Z_{i}})}{\lvert \lambda_{i} \rvert} \in \mathbb{R}
	\end{align}
	we have 
	the inequalities
	$\mu_1>\mu_2>\cdots>0$.
	We have the following diagram (see~\cite[Definition~2.2]{MR3327537})
	\begin{align}\label{dia:YZ}
		\xymatrix{
			[Y_{i}/G^{\lambda_{i}\ge 0}] \ar[r]^-{\cong} \ar[d] & [S_{i}/G] \ar[dl]_{p_{i}} \ar@<-0.3ex>@{^{(}->}[r]^-{q_{i}}
			& \left[\left(Y\setminus \cup_{i'<i} S_{i'}\right)/G \right]  \\
			[Z_{i}/G^{\lambda_{i}=0}]. \ar@<-0.3ex>@{^{(}->}[rru]_-{\tau_{i}} & & 
		}
	\end{align}
	Here the left vertical arrow is given by
	taking the $t\to 0$ limit of the action of $\lambda_{i}(t)$
	for $t \in \C$, and $\tau_{i}, q_{i}$ are induced by the 
	embedding $Z_{i} \hookrightarrow Y$, $S_{i} \hookrightarrow Y$ respectively. 
	
	Halpern-Leistner~\cite{Halpinstab} extended 
	the above notion of Kempf-Ness stratifications to $\Theta$-stratifications 
	for more general Artin stacks (see~\cite[Definition~2.2]{Halpinstab}). 
	Let $\nN$ be a classical Artin stack 
	locally of finite type and with affine stabilizers. 
	Suppose that it admits a good moduli space $\nN \to N$ (see~\cite{MR3237451}). 
	Then for any $l \in \Pic(\nN)_{\mathbb{Q}}$
	and a positive definite $b \in H^4(\nN, \mathbb{Q})$
	(which corresponds to Weyl-invariant norm on $N_{\mathbb{R}}$ in the above 
	setting), 
	there is an associated $\Theta$-stratification 
	(see~\cite[Theorem~4.1.3]{HalpK3})
	\begin{align*}
	\nN=\sS_1 \sqcup \sS_2 \sqcup \cdots \sqcup \sS_N \sqcup \nN^{l\sss}. 	
		\end{align*}
	Here $b$ is called positive definite if for any non-degenerate $f \colon B\C \to \nN$, 
	we have $q^{-2} f^{\ast}b>0$, 
	where $q$ is the generator of $H^{\ast}(B\C)=\mathbb{Q}[q]$. 
	Similarly to KN stratifications, there
	are associated closed substacks $\zZ_i \subset \sS_i$
	(called center of $\sS_i)$
	with canonical $\C$-stabilizers at each point of $\zZ_i$, 
	 and a diagram of attracting loci similar to (\ref{dia:YZ})
	\begin{align}\label{dia:NS}
		\xymatrix{ \sS_i \ar@<-0.3ex>@{^{(}->}[r] \ar[d] & 
			\nN \setminus \bigcup_{i'<i} \sS_{i'} \\
			\zZ_i \ar@<-0.3ex>@{^{(}->}[ru]_-{\tau_{i}}. &
			}
		\end{align}
	
	\subsection{Window theorem}
In the diagram (\ref{dia:YZ}), 
let $\eta_{i} \in \mathbb{Z}$ be defined by 
	\begin{align}\label{etai}
		\eta_{i} \cneq \wt_{\lambda_{i}}(\det(N_{S_{i}/Y}^{\vee}|_{Z_{i}})).
	\end{align}
	In the case that $Y$ is a $G$-representation, 
	it is also written as 
	\begin{align*}
		\eta_i =\langle \lambda_i, (Y^{\vee})^{\lambda_i>0}-(\mathfrak{g}^{\vee})^{\lambda_i>0}   \rangle. 
	\end{align*}
	Here for a $G$-representation $W$ and a one parameter 
	subgroup $\lambda \colon \C \to T$, 
	we denote by $W^{\lambda>0} \in K(BT)$ the 
	subspace of $W$ spanned by weights which pair positively with $\lambda$. 
	We will use the following version of window theorem:
	\begin{thm}\label{thm:window}\emph{(\cite{MR3327537, MR3895631})}
		For each $i$, we take $m_{i} \in \mathbb{R}$. 
		Let
		\begin{align}\label{window:m}
			\mathbb{W}_{m_{\bullet}}^l([Y/G]) \subset 
			D^b([Y/G])
		\end{align}
		be the subcategory of objects $\pP$
		satisfying the condition
		\begin{align}\label{condition:P}
			\tau_{i}^{\ast}(\pP) \in 
			\bigoplus_{j \in [m_{i}, m_{i}+\eta_{i})}
			D^b([Z_{i}/G^{\lambda_i=0}])_{\lambda_{i} \mathchar`- \wt= j}
		\end{align}
		for all $1\le i\le N$. 
		Then the composition functor 
		\begin{align*}
			\mathbb{W}_{m_{\bullet}}^l([Y/G]) \hookrightarrow 
			D^b([Y/G])
			\twoheadrightarrow
			D^b([Y^{l\sss}/G])
		\end{align*}
		is an equivalence. 
	\end{thm}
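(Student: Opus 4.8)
The statement to be proved is Theorem~\ref{thm:window}, the window theorem identifying the subcategory $\mathbb{W}_{m_{\bullet}}^l([Y/G])$ with $D^b([Y^{l\sss}/G])$. Let me sketch the approach.

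The plan is to prove Theorem~\ref{thm:window} by induction on the number $N$ of Kempf--Ness strata in (\ref{KN:strata}), peeling off one stratum at a time. The base case $N=0$ is the tautology $[Y/G]=[Y^{l\sss}/G]$. For the inductive step I would work with the open substacks $\mathfrak{X}_i \cneq [(Y\setminus \cup_{i'<i}S_{i'})/G]$, so that $\mathfrak{X}_1=[Y/G]$ and $\mathfrak{X}_{N+1}=[Y^{l\sss}/G]$, and reduce to the single-stratum situation: the open immersion $j\colon \mathfrak{X}_{i+1}\hookrightarrow \mathfrak{X}_i$ has closed complement $[S_i/G]$, and I must show that the window condition (\ref{condition:P}) at the stratum $S_i$ cuts out a subcategory of $D^b(\mathfrak{X}_i)$ mapping equivalently to $D^b(\mathfrak{X}_{i+1})$ under $j^\ast$. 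Crucially, each center $Z_{i'}$ for $i'>i$ lies in $Y\setminus S_i$, so the grade-restriction conditions at the remaining strata are undisturbed by removing $S_i$; this is what makes the multi-stratum window condition compatible with the induction.

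For the single-stratum step I would use the attracting-locus diagram (\ref{dia:YZ}) and the identification $[S_i/G]\cong[Y_i/G^{\lambda_i\ge 0}]$. The restriction $j^\ast$ is a Verdier localization whose kernel is the subcategory $D^b_{S_i}(\mathfrak{X}_i)$ of objects set-theoretically supported on $S_i$. The technical heart is a weight computation on the center: the conormal bundle $N^\vee_{S_i/Y}|_{Z_i}$ carries strictly positive $\lambda_i$-weights summing to $\eta_i$ as in (\ref{etai}), so local cohomology along $S_i$, computed by the Koszul resolution whose top term contributes $\det N^\vee_{S_i/Y}|_{Z_i}$, raises the $\lambda_i$-weights of the $\tau_i$-restriction by exactly $\eta_i$. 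This is the \emph{magic window width} phenomenon: the interval $[m_i,m_i+\eta_i)$ in (\ref{condition:P}) has precisely the width $\eta_i$ of the conormal directions.

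Granting this weight estimate, both halves of the equivalence follow from Lemma~\ref{lem:vanish}. For full faithfulness, given window objects $\pP_1,\pP_2$, the fiber of the adjunction unit $\pP_2\to j_\ast j^\ast\pP_2$ is the local cohomology $\Gamma_{S_i}\pP_2\in D^b_{S_i}(\mathfrak{X}_i)$, whose $\tau_i$-restriction has $\lambda_i$-weights $\ge m_i+\eta_i$ by the shift above, whereas $\tau_i^\ast\pP_1$ has weights $<m_i+\eta_i$ by (\ref{condition:P}). Lemma~\ref{lem:vanish}(i) (applied to all shifts) then forces $\Hom^\bullet(\pP_1,\Gamma_{S_i}\pP_2)=0$, so $\Hom(\pP_1,\pP_2)\simeq \Hom(j^\ast\pP_1,j^\ast\pP_2)$. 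For essential surjectivity, any object of $D^b(\mathfrak{X}_{i+1})$ extends along the localization to some $\tilde\pP$ on $\mathfrak{X}_i$; I would then correct $\tilde\pP$ by iteratively attaching objects supported on $S_i$, built as $\tau_{i\ast}q_i^\ast$ applied to the individual $\lambda_i$-weight pieces whose full faithfulness is guaranteed by Lemma~\ref{lem:vanish}(ii), so as to push every center-weight into the window $[m_i,m_i+\eta_i)$ without changing $j^\ast\tilde\pP$.

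The main obstacle I anticipate is the local weight computation of the second paragraph: identifying $\eta_i$ with the total conormal weight and verifying that $\Gamma_{S_i}$ concentrates center-weights in $[m_i+\eta_i,\infty)$. This rests on the Koszul description of $\oO_{S_i}$ along the $\lambda_i$-positive conormal directions, and on checking that the isomorphism $[S_i/G]\cong[Y_i/G^{\lambda_i\ge 0}]$ is compatible with the weight grading read off by $\tau_i$. Once this is in place, the residual bookkeeping---compatibility of the $N$ windows under the induction and convergence of the weight-shifting procedure within the bounded derived category---is routine.
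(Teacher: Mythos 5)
The paper itself contains no proof of this statement: Theorem~\ref{thm:window} is imported verbatim from \cite{MR3327537, MR3895631}, so the only thing to compare your sketch against is the argument of those references. At the level of architecture your plan is the right one and is exactly theirs: induct on the strata, reduce to a single stratum via the Verdier localization $j^{\ast}\colon D^b(\fX_i)\to D^b(\fX_{i+1})$ with kernel $D^b_{S_i}(\fX_i)$, and run both halves of the equivalence through the $\lambda_i$-weight grading on the center together with Lemma~\ref{lem:vanish}.

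There is, however, a concrete error in the step you yourself identify as the technical heart, and as written the key vanishing does not follow. With the paper's conventions, $S_i$ is an attracting locus, so $N_{S_i/Y}|_{Z_i}$ has strictly \emph{negative} $\lambda_i$-weights and $N^{\vee}_{S_i/Y}|_{Z_i}$ strictly positive ones summing to $\eta_i>0$. The fiber of the unit $\pP_2\to j_{\ast}j^{\ast}\pP_2$ is $R\Gamma_{S_i}\pP_2=\varinjlim_n \RHom(\oO_Y/I_{S_i}^n,\pP_2)$, which is built from the \emph{dual} Koszul complex and hence contributes $\det N_{S_i/Y}|_{Z_i}$ of weight $-\eta_i$ (together with $\Sym^k N_{S_i/Y}$ of non-positive weights): its center-weights are therefore $<m_i$, lowered out of the bottom of the window, not $\ge m_i+\eta_i$ as you claim. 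The correct application of Lemma~\ref{lem:vanish}(i) is $\Hom(\pP_1,R\Gamma_{S_i}\pP_2)=0$ with $\tau_i^{\ast}\pP_1$ in weights $\ge m_i$ and $\tau_i^{\ast}R\Gamma_{S_i}\pP_2$ in weights $<m_i$; your inequalities sit on the wrong sides of the cutoff and would instead prove the irrelevant vanishing $\Hom(R\Gamma_{S_i}\pP_2,\pP_1)=0$. The ``raising by $\eta_i$'' you describe is what the functor $q_{i\ast}p_i^{\ast}$ does (its $\tau_i^{\ast}$ involves $\bigwedge^{\bullet}N^{\vee}_{S_i/Y}|_{Z_i}$, spreading weights upward over an interval of length $\eta_i$); that computation belongs to the essential-surjectivity half, where the baric truncation of $D^b_{S_i}(\fX_i)$ is built from such pushforwards, not to the computation of $R\Gamma_{S_i}$. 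Two smaller points should not be dismissed as bookkeeping: $j_{\ast}j^{\ast}\pP_2$ is in general only quasi-coherent, so the adjunction-triangle argument must be run in $D^{+}_{\mathrm{qc}}$ or rephrased purely in terms of $\Hom$-spaces; and the boundedness of the center-weights of a coherent extension $\widetilde{\pP}$, which is what makes your iterative correction terminate, is a genuine lemma in \cite{MR3327537} rather than a formality.
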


Suppose that $Y$ is a symmetric $G$-representation, i.e. 
$Y \cong Y^{\vee}$ as $G$-representations. 
In this case, there 
is another version of window theorem~\cite{HLKSAM}, called \textit{magic window theorem}. 
We denote by $\Sigma \subset M_{\mathbb{R}}$ the subset
\begin{align}\label{sub:W}
	\Sigma=\sum_{\gamma \in \mathrm{wt}_T(Y)} [0, 1] \cdot \gamma \subset M_{\mathbb{R}}. 
	\end{align}
Namely $\Sigma$ is the convex hull of $T$-weights of $\bigwedge^{\ast}(Y)$. 
For $\delta \in M_{\mathbb{R}}^{W}$, the magic window subcategory 
\begin{align}\label{magic}
	\mathbb{W}_{\delta} \subset D^b([Y/G])
	\end{align}
is defined to be split generated by 
$V(\chi) \otimes \oO_Y$
for $\chi \in M^+$ satisfying 
\begin{align*}
	\chi+\rho \in \frac{1}{2}\Sigma +\delta. 
	\end{align*}
An element in $M_{\mathbb{R}}^{W}$ is called $\Sigma$-\textit{generic}
 if it lies in the 
linear span of $\Sigma$ but is not parallel to any face of $\Sigma$. 
\begin{thm}\emph{(\cite[Theorem~3.2]{HLKSAM})}
	We take $\delta, l \in M_{\mathbb{R}}^W$ such that 
	$\partial(\Sigma/2 +\delta) \cap M^+=\emptyset$. 
	Then the composition functor
	\begin{align*}
		\mathbb{W}_{\delta} \hookrightarrow D^b([Y/G]) \twoheadrightarrow 
		D^b([Y^{l\sss}/G])
		\end{align*}
	is fully-faithful. 
	If there is a $\Sigma$-generic element in $M_{\mathbb{R}}^W$, 
	then the above functor is an equivalence 
	whenever $Y^{l\sss}=Y^{l\st}$, where $Y^{l\st}$ is the $l$-stable part.  
	\end{thm}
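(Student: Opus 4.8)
The plan is to deduce the theorem from the Kempf--Ness stratification of the unstable locus together with the weight-vanishing Lemma~\ref{lem:vanish} and the window theorem~\ref{thm:window}; the only genuinely new ingredient is a combinatorial comparison between the magic-window polytope $\tfrac{1}{2}\Sigma+\delta$ of~(\ref{magic}) and the stratumwise weight windows governed by the integers $\eta_i$ of~(\ref{etai}). Throughout I would exploit that $Y\cong Y^{\vee}$ forces the weight polytope $\Sigma$ of~(\ref{sub:W}) to be centrally symmetric, and that this symmetry, combined with the $\rho$-shift appearing in the definition of $\mathbb{W}_{\delta}$, is what reconciles the $\lambda_i$-extent of $\tfrac{1}{2}\Sigma$ with the window width $\eta_i$ simultaneously at every Kempf--Ness ray $\lambda_i$.

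For the fully-faithful statement, fix two generators $\pP=V(\chi)\otimes\oO_Y$ and $\pP'=V(\chi')\otimes\oO_Y$ of $\mathbb{W}_{\delta}$, so that $\chi+\rho,\chi'+\rho\in\tfrac{1}{2}\Sigma+\delta$. Full-faithfulness is equivalent to the assertion that the restriction map $\RHom_{[Y/G]}(\pP,\pP')\to\RHom_{[Y^{l\sss}/G]}(\pP|_{Y^{l\sss}},\pP'|_{Y^{l\sss}})$ is a quasi-isomorphism. I would analyze its cone using the filtration of $Y\setminus Y^{l\sss}$ by the strata $S_i$ of~(\ref{KN:strata}): each graded piece is a local-cohomology term supported on $S_i$, and via the attracting diagram~(\ref{dia:YZ}) it is identified, up to shift, with a $\Hom$ computed on the center $[Z_i/G^{\lambda_i=0}]$ between the restrictions of $\pP$ and $\pP'$ twisted by $\det(N_{S_i/Y}^{\vee}|_{Z_i})$, i.e. shifted in $\lambda_i$-weight by $\eta_i$. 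By Lemma~\ref{lem:vanish}(i) it then suffices to show, for every $i$, that the $\lambda_i$-weights carried by $\pP$ at $Z_i$ and those of $\pP'$ shifted by $\eta_i$ occupy disjoint half-lines.

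I expect this weight separation to be the main obstacle, and it is here that the polytope hypothesis enters. The key observation is that one need not fit each $V(\chi)$ into a window of width $\eta_i$, which fails for large highest weights; rather one only needs to control the \emph{difference} $\langle\lambda_i,\chi-\chi'\rangle=\langle\lambda_i,(\chi+\rho)-(\chi'+\rho)\rangle$, and this is bounded by the $\lambda_i$-extent of $\tfrac{1}{2}\Sigma+\delta$ simply because both $\chi+\rho$ and $\chi'+\rho$ lie in this $W$-invariant polytope. Central symmetry of $\Sigma$ computes that extent as $\langle\lambda_i,Y^{\lambda_i>0}\rangle$, which differs from $\eta_i=\langle\lambda_i,(Y^{\vee})^{\lambda_i>0}-(\mathfrak{g}^{\vee})^{\lambda_i>0}\rangle$ exactly by the term $\langle\lambda_i,\mathfrak{g}^{\lambda_i>0}\rangle$; the $\rho$-shift built into the condition $\chi+\rho\in\tfrac12\Sigma+\delta$ is precisely what absorbs this discrepancy once the weights are transported to the attracting locus. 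The hypothesis $\partial(\tfrac{1}{2}\Sigma+\delta)\cap M^{+}=\emptyset$ then renders the resulting inequality strict, giving the required disjointness of half-lines. I would isolate this as a purely combinatorial lemma about the projections of $\tfrac{1}{2}\Sigma+\delta$ onto the finitely many rays $\lambda_i$, the central symmetry being what lets the single category $\mathbb{W}_{\delta}$ serve on both sides of every wall at once.

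For the equivalence under the additional hypotheses, it remains, once full-faithfulness is established, to prove essential surjectivity. The target $D^b([Y^{l\sss}/G])$ is split-generated by the restrictions of all $V(\chi)\otimes\oO_Y$ with $\chi\in M^{+}$, so it suffices to rewrite each such restriction in terms of the magic-window generators. When $M_{\mathbb{R}}^{W}$ contains a $\Sigma$-generic element, no admissible highest weight is parallel to a face of $\Sigma$, and I would then run an induction on the distance of $\chi$ from $\tfrac12\Sigma+\delta-\rho$: using the Koszul syzygies coming from the $\oO_Y$-module structure along $Y^{\vee}$, a highest weight lying outside the polytope is expressed through weights closer to it, the error terms introduced being supported on $Y\setminus Y^{l\sss}$ and hence vanishing after restriction. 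The genericity of $\delta$ guarantees that this reduction never stalls on the boundary, while the hypothesis $Y^{l\sss}=Y^{l\st}$ ensures that $[Y^{l\sss}/G]$ is a smooth separated Deligne--Mumford stack with no strictly semistable points and a clean $\Theta$-stratification, so that the induction terminates and the magic-window generators already split-generate the target; combined with full-faithfulness this yields the asserted equivalence.
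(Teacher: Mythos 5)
This statement is imported verbatim from \cite{HLKSAM}; the paper gives no proof of it, so there is nothing internal to compare against and I am judging your proposal on its own terms.

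For fully-faithfulness your skeleton — filter $Y\setminus Y^{l\sss}$ by the strata of (\ref{KN:strata}), identify each local-cohomology piece via (\ref{dia:YZ}) with a $\Hom$ on $[Z_i/G^{\lambda_i=0}]$ shifted by $\eta_i$, and conclude by Lemma~\ref{lem:vanish} — is the standard and correct one, but the weight estimate you feed into it is not the one the argument needs, and your derivation of it does not close. Lemma~\ref{lem:vanish} requires $\lvert\langle\lambda_i,\mu-\mu'\rangle\rvert<\eta_i$ for \emph{every} $T$-weight $\mu$ of $V(\chi)$ and $\mu'$ of $V(\chi')$, not only for the highest weights: $\tau_i^{\ast}(V(\chi)\otimes\oO_Y)$ carries the full weight multiset of $V(\chi)$, whose $\lambda_i$-spread is governed by the whole Weyl polytope of $\chi$ and grows without bound. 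You control only $\langle\lambda_i,\chi-\chi'\rangle$, and even there the accounting fails: two points of $\tfrac12\Sigma+\delta$ differ in $\lambda_i$-pairing by up to $\langle\lambda_i,Y^{\lambda_i>0}\rangle=\eta_i+\langle\lambda_i,\mathfrak{g}^{\lambda_i>0}\rangle$, while the $\rho$'s cancel identically in $(\chi+\rho)-(\chi'+\rho)$, so nothing in your argument ``absorbs'' the surplus $\langle\lambda_i,\mathfrak{g}^{\lambda_i>0}\rangle$. The estimate that does hold and does suffice is that every weight $\mu$ of $V(\chi)$ satisfies $\lvert\langle\lambda,\mu\rangle-\langle\lambda,\delta\rangle\rvert<\tfrac12\eta_{\lambda}$; it is proved by noting that the extremal value of $\langle\lambda,-\rangle$ on the weights of $V(\chi)$ is $\langle\lambda^{+},\chi\rangle=\langle\lambda^{+},\chi+\rho\rangle-\tfrac12\langle\lambda,\mathfrak{g}^{\lambda>0}\rangle$ for the dominant Weyl translate $\lambda^{+}$ of $\lambda$ — the gain of $\tfrac12\langle\lambda,\mathfrak{g}^{\lambda>0}\rangle$ comes from the dominance of $\chi$ interacting with $\lambda^{+}$, not from a cancellation of $\rho$'s in a difference — and the hypothesis $\partial(\Sigma/2+\delta)\cap M^{+}=\emptyset$ is exactly what makes the inequality strict. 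This is the lemma you need to isolate and prove; once you have it, $\mathbb{W}_{\delta}$ lies in a window $\mathbb{W}^{l}_{m_{\bullet}}$ of Theorem~\ref{thm:window} and fully-faithfulness follows with no further local-cohomology work (compare how the paper proves the fully-faithful half of Proposition~\ref{prop:WGD}).

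The equivalence half is where the proposal stops being an argument. ``Koszul syzygies coming from the $\oO_Y$-module structure along $Y^{\vee}$'' does not produce complexes expressing $V(\chi)\otimes\oO_Y|_{Y^{l\sss}}$, for $\chi+\rho$ outside the polytope, through the magic-window generators; the complexes that accomplish this are the Borel--Weil--Bott pushforwards from attracting loci as in Proposition~\ref{prop:resol}, and controlling which $V((\chi-\sigma_I)^{+})$ occur and why the descent terminates is precisely the hard combinatorial content of \cite{HLKSAM}, not a routine induction. You have also misread $\Sigma$-genericity: it is a condition on elements of $M_{\mathbb{R}}^{W}$ (one perturbs $\delta$ or $l$ along a generic ray), not the statement that ``no admissible highest weight is parallel to a face of $\Sigma$.'' Finally, $Y^{l\sss}=Y^{l\st}$ is invoked only to say the quotient is Deligne--Mumford, but it must actually do work — in \cite{HLKSAM} it enters through a rank count matching the number of magic-window generators with the K-theory of the stable quotient. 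As written, the surjectivity step asserts its conclusion rather than proving it.
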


\subsection{Categorified Hall product}
In the setting of the diagram (\ref{dia:attract0}), 
since $p_{\lambda}$ is proper
we have the functor
\begin{align}\label{cat:prod}
	p_{\lambda \ast} q_{\lambda}^{\ast} \colon 
	D^b([Y^{\lambda=0}/G^{\lambda=0}]) \to 
	D^b([Y^{\lambda \ge 0}/G^{\lambda \ge 0}])
	\to 	D^b([Y/G]). 
	\end{align}
In the case that $Y$ is a moduli stack of representations of quivers, 
the above functor gives a categorified Hall product~\cite{Tudor1.5}. 
The image of $V(\chi) \otimes \oO_{Y^{\lambda=0}}$ under the 
above functor is calculated using 
Borel-Weil-Bott theorem. 
We will use the following fact: 
\begin{prop}\emph{(\cite[Proposition~3.8]{HLKSAM})}\label{prop:resol}
	For $\chi \in M^+$, 
	the object 	
	\begin{align*}
		p_{\lambda \ast} q_{\lambda}^{\ast}(V(\chi) \otimes \oO_{Y^{\lambda=0}})
		 \in D^b([Y/G])
		 \end{align*}
	is a successive extension of objects of the form 
	$V((\chi-\sigma_I)^+) \otimes \oO_{Y}[\sharp I -l(I)]$. 
		Here $I$ is a finite subset
	\begin{align*}
		I \subset \{ \beta \in \wt_{T}(Y) : \langle \beta, \lambda \rangle 
		<0\}, 
		\end{align*}
	$\sigma_I$ is the sum of $\beta \in I$
	and $l(I)$ is the length of 
	$w \in W$ with $w \ast (\chi -\sigma_I) \in M^+$. 
		Moreover if $(\chi-\sigma_I)^+=\chi$ implies $\sharp I=0$, 
	then $V(\chi) \otimes \oO_Y$ appears exactly once. 
	\end{prop}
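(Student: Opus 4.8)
The plan is to compute the functor $p_{\lambda\ast}q_\lambda^\ast$ explicitly by factoring it through a flag bundle and a Koszul resolution, and then to reduce the Borel--Weil--Bott calculation to line bundles on the \emph{full} flag variety. Write $P\cneq G^{\lambda\ge0}$, $L\cneq G^{\lambda=0}$, fix a Borel subgroup $B\subset P$, and (treating $Y$ as a $G$-representation, as the weight notation requires) set $\Lambda\cneq Y^{\lambda<0}$, the $P$-subrepresentation with $\wt_T(\Lambda)=\{\beta\in\wt_T(Y):\langle\beta,\lambda\rangle<0\}$, so that $Y/Y^{\lambda\ge0}\cong\Lambda$ as $P$-representations. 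The morphism $p_\lambda$ factors as the closed immersion $i\colon[Y^{\lambda\ge0}/P]\hookrightarrow[Y/P]$ followed by the flag bundle $\pi\colon[Y/P]\to[Y/G]$, while $q_\lambda^\ast(V(\chi)\otimes\oO_{Y^{\lambda=0}})=V(\chi)\otimes\oO_{Y^{\lambda\ge0}}$, with $V(\chi)$ viewed as a $P$-representation via $P\twoheadrightarrow L$. Since $Y^{\lambda\ge0}$ is cut out by the $P$-equivariant tautological section, the Koszul resolution gives a $P$-equivariant identification $i_\ast\oO_{Y^{\lambda\ge0}}\simeq\bigwedge^{\bullet}\Lambda^\vee\otimes\oO_Y$ with $\bigwedge^{k}\Lambda^\vee$ in cohomological degree $-k$, so by the projection formula the statement reduces to computing the derived induction $\dR\Ind_P^G(V(\chi)\otimes\bigwedge^{\bullet}\Lambda^\vee)\otimes\oO_Y$ in $D^b([Y/G])$.

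The key reduction --- and the reason only the highest weight $\chi$, rather than all weights of $V(\chi)$, appears --- is to realize $V(\chi)$ itself via Borel--Weil on the Levi. As $\chi\in M^+$ is in particular $L$-dominant, $V(\chi)=\dR\Ind_{B}^{P}(\mathbb{C}_\chi)$ is concentrated in degree zero, so the projection formula together with transitivity of induction yields
\begin{align*}
\dR\Ind_P^G\bigl(V(\chi)\otimes\textstyle\bigwedge^{\bullet}\Lambda^\vee\bigr)\simeq\dR\Ind_B^G\bigl(\mathbb{C}_\chi\otimes\textstyle\bigwedge^{\bullet}\Lambda^\vee|_B\bigr).
\end{align*}
I would then equip the bounded complex $\mathbb{C}_\chi\otimes\bigwedge^{\bullet}\Lambda^\vee|_B$ with a $B$-equivariant filtration by subcomplexes: the stupid filtration $\bigwedge^{\le k}$ works because the Koszul differential lowers exterior degree, and its graded pieces $\mathbb{C}_\chi\otimes\bigwedge^{k}\Lambda^\vee|_B$ carry zero differential; refining each $B$-module $\bigwedge^{k}\Lambda^\vee|_B$ by a composition series of weight lines $\mathbb{C}_{-\sigma_I}$ with $\sharp I=k$, and taking preimages, produces a finite filtration of the whole complex by subcomplexes whose graded pieces are the shifted weight lines $\mathbb{C}_{\chi-\sigma_I}[\sharp I]$, indexed by finite subsets $I\subset\wt_T(\Lambda)$.

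Applying the exact functor $\dR\Ind_B^G(-)\otimes\oO_Y=\dR\Gamma(G/B,-)\otimes\oO_Y$ to this filtration, the distinguished triangles attached to the filtration exhibit the output as a successive extension of the objects $\dR\Ind_B^G(\mathbb{C}_{\chi-\sigma_I})\otimes\oO_Y[\sharp I]$. Here I invoke classical Borel--Weil--Bott for line bundles on $G/B$: $\dR\Ind_B^G(\mathbb{C}_{\chi-\sigma_I})$ is concentrated in the single degree $l(I)$ and equals $V((\chi-\sigma_I)^+)[-l(I)]$ when $\chi-\sigma_I+\rho$ is regular, and vanishes otherwise; combined with the Koszul shift this gives exactly $V((\chi-\sigma_I)^+)\otimes\oO_Y[\sharp I-l(I)]$, the singular terms being zero and discarded. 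The subset $I=\emptyset$ contributes $\dR\Ind_B^G(\mathbb{C}_\chi)=V(\chi)$ in degree $0$, so a copy of $V(\chi)\otimes\oO_Y$ always occurs; under the stated hypothesis no other $I$ satisfies $(\chi-\sigma_I)^+=\chi$, and hence this copy occurs exactly once.

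The step I expect to be the main obstacle is this final assembly, and the point is conceptual rather than computational: one must read ``successive extension'' as an iterated cone along a genuine filtration, not as a degenerating spectral sequence. Because each graded piece of the $B$-equivariant filtration induces to a complex concentrated in a single cohomological degree, the triangles of the filtration directly present $p_{\lambda\ast}q_\lambda^\ast(V(\chi)\otimes\oO_{Y^{\lambda=0}})$ as an iterated extension of the listed objects, with no convergence issue to settle. The only genuine inputs are the $P$-equivariance of the Koszul resolution, the transitivity and projection-formula reduction to the Borel (which is precisely what restricts the highest weights occurring to $\chi$), and Borel--Weil--Bott vanishing on $G/B$.
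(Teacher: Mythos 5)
Your proposal is correct, and it is worth noting that the paper offers no proof of this proposition at all: it is imported verbatim from the cited reference (Halpern-Leistner--Sam, Proposition~3.8), and your reconstruction --- factoring $p_{\lambda}$ through $[Y/G^{\lambda \ge 0}]$, Koszul-resolving the attracting locus by $\bigwedge^{\bullet}(Y/Y^{\lambda \ge 0})^{\vee}$, reducing to the full flag variety via Borel--Weil on the Levi together with transitivity and the projection formula, filtering by $B$-stable weight lines, and finishing with Borel--Weil--Bott --- is precisely the argument given there (and in the Donovan--Segal appendix in the Grassmannian case). The one imprecision is cosmetic: $Y^{\lambda<0}$ is in general only a $T$- (or $L$-) subrepresentation rather than a $P$-subrepresentation, but what actually enters the Koszul complex is the conormal module $(Y/Y^{\lambda\ge 0})^{\vee}$, which is a genuine $P$-module with exactly the weights $-\beta$ you use, so nothing in the argument is affected.
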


		\subsection{The category of factorizations}\label{subsec:MF}
	Let $\yY$ be a smooth noetherian algebraic stack 
	over $\mathbb{C}$
	and take $w \in \Gamma(\oO_{\yY})$. 
	A (coherent) factorization of $w$ consists of 
	\begin{align}\label{fact:P}
		\xymatrix{
			\pP_0 \ar@/^8pt/[r]^{\alpha_0} &  \ar@/^8pt/[l]^{\alpha_1} \pP_1,  
		} \
		\alpha_0 \circ \alpha_1=\cdot w, \ 
		\alpha_1 \circ \alpha_0=\cdot w, 
	\end{align}
	where each $\pP_i$ is a coherent sheaf on $\yY$ and 
	$\alpha_i$ is a morphism of coherent sheaves. 
	The category of coherent factorizations naturally
	forms a dg-category, whose homotopy 
	category is denoted by $\mathrm{HMF}(\yY, w)$. 
	The subcategory of absolutely acyclic 
	objects 
	\begin{align*}
		\mathrm{Acy}^{\rm{abs}} \subset \mathrm{HMF}(\yY, w)
	\end{align*}
	is defined to be the minimum thick triangulated subcategory 
	which contains totalizations of short exact sequences of 
	coherent factorizations of $w$. 
	The triangulated category of factorizations of $w$ is defined by 
	(cf.~\cite{Ornonaff, MR3366002, MR3112502})
	\begin{align*}
		\mathrm{MF}(\yY, w) \cneq \mathrm{HMF}(\yY, w)/\mathrm{Acy}^{\rm{abs}}. 
	\end{align*}
	If $\yY$ is an affine scheme, then $\MF(\yY, w)$ 
	is equivalent to Orlov's triangulated category of matrix factorizations 
	of $w$~\cite{Orsin}.
	
	Let $\yY=[Y/G]$ for a smooth quasi-projective scheme $Y$
	and $G$ is an affine algebraic group acting on $Y$. 
	Suppose that there is an auxiliary $\C$-action on $Y$ 
	which commutes with the $G$-action, 
	and the regular function $w \in \Gamma(\oO_{\yY})$ is 
	of $\C$-weight one. 
	A $\C$-equivariant (coherent) factorization of $w$ consists of 
(\ref{fact:P}) such that 
$\alpha_0$ is of $\C$-weight zero and $\alpha_1$ is of $\C$-weight one. 
The triangulated category of $\C$-equivariant factorizations of $w$ 
is also similarly defined, and denoted by 
\begin{align*}
	\MF^{\C}(\yY, w). 
	\end{align*}
If $\C$ acts on $Y$ trivially and $w=0$, 
then $\MF^{\C}(\yY, 0)$ is equivalent to $D^b(\yY)$. 

	\subsection{Koszul duality}\label{subsec:Koszul}
	For a derived Artin stack $\fM$, its $(-1)$-shifted cotangent 
	is defined by 
	\begin{align*}
		\Omega_{\fM}[-1] \cneq \Spec \Sym(\mathbb{T}_{\fM}[1]). 
	\end{align*}
	Here $\mathbb{T}_{\fM}$ is the tangent complex of $\fM$. 
	In the case that $\fM$ is a derived complete intersection, the 
	classical truncation of $\Omega_{\fM}[-1]$ has 
	the following critical locus description. 
	Let $\yY=[Y/G]$ for a smooth quasi-projective scheme $Y$
	and $G$ is an affine algebraic group acting on $Y$. 
	Let $\fF \to \yY$ be a vector bundle on it with a section $s$. 
	Suppose that $\fM$ is a derived zero locus of $s$. 
	Let $w$ be the function
	\begin{align*}
		w \colon \fF^{\vee} \to \mathbb{A}^1, \ 
		w(y, v)=\langle s(y), v \rangle
	\end{align*}
	for $y \in \yY$ and $v \in \fF^{\vee}|_{y}$. 
	Then $t_0(\Omega_{\fM}[-1])$ is 
	isomorphic to the critical locus 
	$\Crit(w)$ (see~\cite[Proposition~2.8]{MR3607000}, \cite[Lemma~2.5]{ToQuot}). 
	The above construction is summarized in the following diagram 
	\begin{align}\label{dia:-1shift}
		\xymatrix{
			\yY \ar@<-0.3ex>@{^{(}->}[r]^-{0} \diasquare& \fF \ar[d] \\
			\fM \ar@<-0.3ex>@{^{(}->}[u] \ar@<-0.3ex>@{^{(}->}[r]
			& \yY,  \ar@/_10pt/[u]_-{s} 
		}	
		\quad 
		\xymatrix{t_0(\Omega_{\fM}[-1]) \ar[r]^-{\cong} \ar[d] &
			\Crit(w) \ar@<-0.3ex>@{^{(}->}[r] \ar[d] & \fF^{\vee} \ar[d] \ar[rd]^-{w} & \\
			\fM \ar@{=}[r]& \fM \ar@<-0.3ex>@{^{(}->}[r] & \yY & \mathbb{A}^1. 
		}
	\end{align}
Let $\C$ acts on fibers of $\fF^{\vee} \to Y$ with weight one. 
In the above setting, the Koszul duality equivalence in~\cite[Proposition~4.8]{MR3631231}
(also see~\cite{MR3071664, MR2982435, TocatDT})
is the following: 
\begin{thm}\emph{(\cite{MR3631231, MR3071664, MR2982435, TocatDT})}\label{thm:koszul}
There is an equivalence
\begin{align*}
	D^b(\fM) \stackrel{\sim}{\to} \MF^{\C}(\fF^{\vee}, w). 
\end{align*}
\end{thm}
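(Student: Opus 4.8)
The plan is to realize the equivalence through the standard linear Koszul duality between the exterior and symmetric algebras, carried out $G$-equivariantly on $\yY=[Y/G]$ and with the auxiliary $\C$-action recording an internal grading; this is precisely the situation treated in the cited works, so the real task is to identify both sides with (dg, respectively curved) modules over a Koszul-dual pair of algebras and to match the grading conventions. First I would give the Koszul presentation of the derived zero locus: since $\fM$ is the derived vanishing of the section $s \in \Gamma(\fF)$, its structure sheaf is resolved by the Koszul complex, so that
\begin{align*}
	\oO_{\fM} \simeq \aA \cneq \left(\Sym_{\oO_{\yY}}(\fF^{\vee}[1]),\ \iota_s\right),
\end{align*}
a sheaf of dg-algebras on $\yY$ whose underlying graded algebra is $\bigwedge^{\bullet}\fF^{\vee}$ and whose differential is contraction with $s$. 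Consequently $D^b(\fM)$ is identified with the derived category of dg-$\aA$-modules having bounded coherent cohomology.

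Next I would describe the factorization side in parallel terms. Writing $\fF^{\vee} = \SPEC_{\yY}\Sym(\fF)$, the potential $w$ is exactly the image of $s$ under $\Gamma(\fF) = \Sym^1(\fF) \hookrightarrow \Sym(\fF)$, and the $\C$ scaling the fibers of $\fF^{\vee}$ with weight one makes $w$ of weight one. Thus $\MF^{\C}(\fF^{\vee}, w)$ is the homotopy category of $\C$-equivariant curved modules over $\Sym(\fF)$ with curvature $w$, the $\C$-weight serving as the internal grading. The duality functor itself would be induced by the tautological Koszul factorization
\begin{align*}
	\kK \cneq \left(\Sym(\fF) \otimes_{\oO_{\yY}} \bigwedge^{\bullet}\fF^{\vee},\ d_{\kK}\right)
\end{align*}
on $\fF^{\vee}$, whose differential $d_{\kK}$ is assembled from the tautological pairing $\fF \otimes \fF^{\vee} \to \oO_{\yY}$ and from the section $s$ so that $d_{\kK}^2 = \cdot\, w$. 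Sending a dg-$\aA$-module to the totalization of its tensor product with $\kK$ over $\aA$ produces a $\C$-equivariant factorization of $w$, defining $\Phi \colon D^b(\fM) \to \MF^{\C}(\fF^{\vee}, w)$.

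Finally, to prove that $\Phi$ is an equivalence I would argue by generation together with a comparison of morphism complexes. The category $D^b(\fM)$ is split-generated by the objects $V(\chi) \otimes \oO_{\fM}$ for $\chi \in M^+$, and $\Phi$ carries these to the twisted Koszul factorizations $\kK_{\chi}$, which in turn split-generate $\MF^{\C}(\fF^{\vee}, w)$; it then remains to verify that $\Phi$ induces isomorphisms on $\RHom$'s. Both Hom-complexes are computed by the same Koszul-dual resolution once the $\C$-weight grading on the factorization side is matched with the cohomological grading on $D^b(\fM)$, which yields fully-faithfulness, and together with generation gives the equivalence.

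The \emph{main obstacle} I expect is not any new homological input but the globalization of the classical affine, non-equivariant Koszul duality over the quotient stack $\yY = [Y/G]$ while simultaneously tracking the two gradings: one must ensure that $\kK$ and $\Phi$ are natural and $G$-equivariant, and that the fiberwise $\C$-weight corresponds correctly to the degree in which the exterior generators $\fF^{\vee}[1]$ are placed. This careful bookkeeping of conventions in the $[Y/G]$-setting is exactly what the cited results supply, and invoking them completes the argument.
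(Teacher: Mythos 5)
The paper does not prove this statement: Theorem~\ref{thm:koszul} is quoted as a black box from the cited references (Hirano's \cite[Proposition~4.8]{MR3631231}, Isik, Shipman, and \cite{TocatDT}), so there is no in-paper argument to compare against. Your sketch is a faithful outline of the strategy those references follow -- identify $\oO_{\fM}$ with the Koszul dg-algebra $(\bigwedge^{\bullet}\fF^{\vee}, \iota_s)$, identify $\MF^{\C}(\fF^{\vee}, w)$ with graded curved modules over $\Sym(\fF)$ with curvature $w$, and pass between them via the tautological Koszul factorization -- so as a roadmap it is the right one.

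There is, however, one step that would fail as written. You claim $D^b(\fM)$ is split-generated by $V(\chi)\otimes\oO_{\fM}$ for $\chi\in M^+$ and that their images split-generate $\MF^{\C}(\fF^{\vee},w)$. That generation statement presupposes $Y$ affine and $G$ reductive with a chosen maximal torus, whereas Theorem~\ref{thm:koszul} is stated (and is used in the paper, e.g.\ in Subsection~\ref{subsec:someversion} and in the proof of Proposition~\ref{prop:window:eq}) for $\yY=[Y/G]$ with $Y$ a smooth \emph{quasi-projective} scheme and $G$ an arbitrary affine algebraic group; in that generality the objects $V(\chi)\otimes\oO_{\fM}$ do not generate, and essential surjectivity cannot be reduced to hitting a generating set of that form. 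Relatedly, the two genuinely delicate points in the cited proofs are glossed over: (i) why the functor preserves the coherence/boundedness conditions in both directions (a priori the Koszul-dual of a coherent complex is an infinite-rank factorization, and one needs the internal $\C$-grading, or a completion argument, to control this), and (ii) essential surjectivity onto all of $\MF^{\C}(\fF^{\vee},w)$ rather than onto the thick subcategory generated by the $\kK_{\chi}$. These are exactly the places where Isik, Shipman and Hirano do real work, so the "main obstacle" is not only equivariant bookkeeping. If you intend your write-up to substitute for the citation, you would need to either restrict to the affine reductive case and then globalize by descent, or reproduce the generation and coherence arguments of the cited papers.
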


Let $\psi \colon F_0 \to F_1$ be a 
morphism of $G$-equivariant vector bundles on $Y$
and set $\fF_i=[F_i/G]$. 
We denote by 
\begin{align*}
	(\fF_0 \stackrel{\psi}{\to} \fF_1) \in D^b(\yY)
	\end{align*}
the associated two term complex. 
We set $F^{-i}=F_i^{\vee}$, $\fF^{-i}=\fF_i^{\vee}$, and 
consider the following total space
\begin{align*}
	\fF_0 \times_{\yY} \fF^{-1}=\left[(F_0 \times_Y F^{-1})/G\right]. 
	\end{align*}
There is a regular function $w$ on it
\begin{align}\label{funct:Fw}
	w \colon \fF_0 \times_{\yY} \fF^{-1} \to \mathbb{A}^1, \ 
	(y, u, v) \mapsto \langle \psi|_{y}(u), v\rangle =\langle 
	u, \psi|_{y}^{\vee}(v) \rangle. 
	\end{align}
Here $y \in Y$, $u \in F_0|_{y}$, 
$v \in F^{-1}|_{y}$, and 
$\psi^{\vee} \colon F^{-1} \to F^0$ is the dual of $\psi$. 
We have two auxiliary $\C$-actions on $F_0 \times_Y F^{-1}$ which 
commute with the $G$-action: 
the one is acting on the fibers of $F^{-1} \to Y$ by weight one, 
and the another is acting on the fibers of $F_0 \to Y$ by weight one. 
The function $w$ is of weight one with respect to both 
of the above actions, so we 
obtain two $\C$-equivariant categories of factorizations 
\begin{align*}
	\MF^{\C}(\fF_0 \times_{\yY} \fF^{-1}, w), \ 
	\MF^{\C}(\fF_0 \times_{\yY} \fF^{-1}, w)'
\end{align*}
where the left one is defined by 
the $\C$-action on $F^{-1}$, and the right one is defined 
by the $\C$-action on $F_0$.

Suppose that there is a one dimensional subtorus $\C \subset G$
which lies in the center of $G$ and acts on $Y$ trivially. 
Then $\C \subset G$ acts on fibers of $F_i \to Y$, and we assume that 
they are of weight one. 
In this situation, the following lemma is implicit in~\cite[Subsection~2.2]{koTo}. 
\begin{lem}\label{lem:action}
	In the above situation, there is an equivalence 
	\begin{align}\label{MF:eq}
		\MF^{\C}(\fF_0 \times_{\yY} \fF^{-1}, w) \simeq
	\MF^{\C}(\fF_0 \times_{\yY} \fF^{-1}, w)'.
	\end{align}
	\end{lem}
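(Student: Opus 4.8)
The plan is to realise the equivalence (\ref{MF:eq}) as an explicit \emph{regrading} functor that is the identity on underlying $G$-equivariant factorizations and only reinterprets the auxiliary $\C$-grading. The starting point is that the two auxiliary actions differ precisely by the given central one-parameter subgroup. Write $\lambda_a\colon \C\to\Aut(F_0\times_Y F^{-1})$ for the action on the fibers of $F^{-1}\to Y$ of weight one (which defines the left-hand side of (\ref{MF:eq})), $\lambda_b$ for the action on the fibers of $F_0\to Y$ of weight one (which defines the right-hand side), and $\iota\colon \C\to\C\subset G$ for the central cocharacter, acting trivially on $Y$ and with weight one on the fibers of each $F_i\to Y$. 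Since $\iota$ then acts with weights $1$ on $F_0$ and $-1$ on $F^{-1}=F_1^\vee$, comparing weights fiberwise gives the identity of cocharacters
\begin{align*}
\lambda_b=\lambda_a+\iota
\end{align*}
as one-parameter subgroups of $\Aut(F_0\times_Y F^{-1})$ commuting with the $G$-action.

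First I would use this relation to define the functor. To a $\C$-equivariant factorization in $\MF^{\C}(\fF_0\times_{\yY}\fF^{-1},w)$, that is a $G$-equivariant factorization $(\pP_0,\pP_1,\alpha_0,\alpha_1)$ together with a compatible $\lambda_a$-action, I assign the same $G$-equivariant factorization equipped instead with the $\lambda_b$-action defined through $\lambda_b=\lambda_a+\iota$, where $\iota$ is taken via the ambient $G$-equivariant structure. This is legitimate because $\iota$ is central in $G$, so the resulting $\lambda_b$ still commutes with $G$. The essential point to check is that the structure maps keep their prescribed weights: as morphisms of $G$-equivariant sheaves, $\alpha_0$ and $\alpha_1$ are invariant under the central subgroup $\iota(\C)\subset G$, hence of $\iota$-weight zero, so their $\lambda_b$-weights coincide with their $\lambda_a$-weights, namely $0$ and $1$. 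Likewise $w$ is $G$-invariant and so of $\iota$-weight zero, whence it stays of $\lambda_b$-weight one, and the relations $\alpha_0\circ\alpha_1=\cdot w=\alpha_1\circ\alpha_0$ are untouched. Thus the assignment indeed lands in $\MF^{\C}(\fF_0\times_{\yY}\fF^{-1},w)'$.

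Next I would upgrade this to an equivalence of triangulated categories. The recipe is manifestly functorial and exact at the level of the dg-categories of coherent factorizations, and it carries short exact sequences of factorizations to short exact sequences, hence preserves the subcategories of absolutely acyclic objects; therefore it descends to the Verdier quotients defining $\MF^{\C}$. Running the same construction with $\lambda_a=\lambda_b-\iota$ produces an inverse functor, so the regrading is an equivalence, which is exactly (\ref{MF:eq}).

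The step requiring the most care is the weight bookkeeping in the second paragraph: one must make sure that the vanishing of the $\iota$-weight of the structure maps is precisely what converts the $\lambda_a$-weight conditions into the $\lambda_b$-weight conditions, with no hidden shift or twist by a character of $\C$ slipping in. There is no genuine geometric obstacle here — the entire content is that the two gradings agree modulo the central torus through which we are already forming the quotient $[\,\cdot\,/G]$ — but the argument should be phrased uniformly enough to apply verbatim to the coherent, the absolutely acyclic, and the localized levels, so that the equivalence is literally the identity on underlying objects.
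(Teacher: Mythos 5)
Your proposal is correct and is essentially the paper's argument in unpacked form: the identity $\lambda_b=\lambda_a+\iota$ of cocharacters is exactly what the paper encodes in the group automorphism $(g,t)\mapsto(t^{-1}g,t)$ of $G\times\C$, and your regrading functor is precisely the pullback along the resulting isomorphism of quotient stacks (which commutes with the second projection, hence with the auxiliary $\C$-structure). The weight bookkeeping you flag — that $\alpha_0,\alpha_1$ and $w$ have $\iota$-weight zero by $G$-equivariance, so the weight-$(0,1,1)$ conditions are preserved — is the correct and complete verification.
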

\begin{proof}
	The isomorphism of algebraic groups 
	\begin{align}\label{isom:group}
		G \times \C \stackrel{\cong}{\to} G \times \C, \ 
		(g, t) \mapsto (t^{-1}g, t)
		\end{align}
	gives an isomorphism of stacks 
	\begin{align}\label{isom:stack}
	\left[(F_0 \times_Y F^{-1})/(G \times \C)\right] \stackrel{\cong}{\to}
		\left[(F_0 \times_Y F^{-1})/'(G \times \C)\right].
		\end{align}
	Here in the left hand side 
	the second $\C$ acts on the fibers of $F^{-1} \to Y$ by weight one, 
	and in the right hand side 
	it acts on the fibers of $F_0 \to Y$ by weight one. 
	Since (\ref{isom:group}) commutes with the second projection, the isomorphism (\ref{isom:stack})
	induces the equivalence (\ref{MF:eq}). 
	\end{proof}

	\begin{rmk}\label{rmk:linear}
		Let $\fU$, $\fU^{\vee}$ be the derived zero loci
		\begin{align*}
			\fU=(\psi=0) \subset \fF_0, \ 
			\fU^{\vee}=(\psi^{\vee}=0) \subset \fF^{-1}. 
			\end{align*}
		Then Theorem~\ref{thm:koszul} together with 
		Lemma~\ref{lem:action}
		implies an equivalence
		$D^b(\fU) \simeq D^b(\fU^{\vee})$,  
		which recovers linear Koszul duality in~\cite{MrRi}. 
		\end{rmk}
	\section{Categorical wall-crossing for framed 
	one loop quiver}
\subsection{One loop quiver}
We denote by $Q$ the one loop quiver, i.e. 
it consists of one vertex $\{1\}$ and one loop:
\begin{align}\label{loop:Q}
	Q=
 \begin{tikzcd}
	\bullet_{1}
	\arrow[out=50, in=310, loop]
\end{tikzcd}
\end{align}
For $d \in \mathbb{Z}_{\ge 0}$, let $V$ be a $d$-dimensional vector 
space. 
The quotient stack 
\begin{align*}
	\mM_{Q}(d) \cneq [\End(V)/\GL(V)]
	\end{align*}
is the moduli stack of $Q$-representations of dimension $d$. 
We have the good moduli space
\begin{align}\label{mor:MQ}
\pi_{Q} \colon 	\mM_Q(d) \to M_Q(d) \cneq \End(V)\ssslash \GL(V)
\stackrel{\cong}{\to} \mathbb{A}^d. 
	\end{align}
Here the last isomorphism is given by 
assigning $A \in \End(V)$ to the coefficients of 
its characteristic polynomial. 
\begin{rmk}\label{rmk:MQ}
The stack $\mM_Q(d)$ is isomorphic to the stack of 
zero-dimensional sheaves $Q$ on $\mathbb{A}^1$ with $\chi(Q)=d$. 
The morphism (\ref{mor:MQ}) is identified with 
the Hilbert-Chow morphism 
sending $Q$ to the support of $Q$
in $\mathrm{Sym}^d(\mathbb{A}^1) \cong \mathbb{A}^d$. 
\end{rmk}

We fix a basis of $V$, 
and a maximal torus $T \subset \GL(V)$
to be consisting of diagonal matrices. 
We also set a Borel subgroup $B \subset \GL(V)$ 
to be consisting of upper triangular matrices, where roots of $B$ 
are set to be negative roots. 
The character lattice $M$ for $T$
is given by $M=\mathbb{Z}^d$, 
and the dominant chamber $M^+ \subset M$ is given by 
\begin{align*}
	M_{\mathbb{R}}^+=\{(x_1, x_2, \ldots, x_d) \in \mathbb{R}^d : 
	x_1 \le x_2 \le \cdots \le x_d\}. 
\end{align*}
We also often denote the standard basis of $M$ 
as $\{e_1, \ldots, e_d\}$ and write
an element of $M_{\mathbb{R}}$ as 
$x_1 e_1+ \cdots + x_d e_d$. 
The half sum of positive roots $\rho$ is given by 
\begin{align*}
	\rho=\frac{1}{2} \sum_{i>j}(e_i-e_j)
	=\left(-\frac{d-1}{2}, -\frac{d-3}{2}, \ldots, 
	\frac{d-1}{2}   \right). 
\end{align*}
The Weyl group of $\GL(V)$ is the symmetric group $S_d$, and 
the Weyl-invariant part $M^W$ is generated by $\chi_0=(1, \ldots, 1)$, 
where $\chi_0$ corresponds to the character 
\begin{align}\label{ch:det}
	\chi_0 \colon \GL(V) \to \C, \ g \mapsto \det(g). 
\end{align}

Let $\Sigma(d) \subset M_{\mathbb{R}}$ be 
the subset in (\ref{sub:W})
for the symmetric $\GL(V)$-representation $\End(V)$. 
Explicitly it is 
\begin{align*}
	\Sigma(d)=\sum_{-1\le c_{ij} \le 1, i>j} c_{ij} \cdot (e_i-e_j). 
	\end{align*} 
For $\delta=t\chi_0 \in M_{\mathbb{R}}^W$ with $t \in \mathbb{R}$, 
the subcategory 
\begin{align}\label{W(d)}
	\mathbb{W}_{\delta}(d) \subset D^b(\mM_{Q}(d))
	\end{align}
is defined 
as in (\ref{magic}), i.e. it is split generated by $V(\chi) \otimes \oO_{\mM_{Q}(d)}$, 
where $\chi \in M^+$ satisfies
$\chi+\rho \in \Sigma(d)/2 +\delta$. 
\begin{lem}\label{lem:genO}
The triangulated subcategory (\ref{W(d)}) is non-zero if and only 
if $\delta=t\chi_0$ for $t \in \mathbb{Z}$. 
In this case, 
it is 
split generated by 
$\chi_0^{\otimes t}$. 
Here we have regarded $\chi_0$ as a line bundle on $\mM_Q(d)$. 
\end{lem}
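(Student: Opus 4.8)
The plan is to reduce the statement to a purely combinatorial question about dominant weights and then settle it by a short convexity argument. By definition $\mathbb{W}_{\delta}(d)$ is split generated by the objects $V(\chi)\otimes\oO_{\mM_Q(d)}$ for those $\chi\in M^+$ with $\chi+\rho\in\Sigma(d)/2+\delta$; since $M^W_{\mathbb{R}}=\mathbb{R}\chi_0$ we may always write $\delta=t\chi_0$ with $t\in\mathbb{R}$. So I would prove the following sharp statement: a dominant weight $\chi$ satisfies $\chi+\rho-t\chi_0\in\Sigma(d)/2$ if and only if $\chi=t\chi_0$, which in particular forces $t\in\mathbb{Z}$. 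Granting this, the only possible generator is $V(t\chi_0)\otimes\oO_{\mM_Q(d)}$; since $V(t\chi_0)=\det^{\otimes t}$ is the one-dimensional representation, this object is exactly the line bundle $\chi_0^{\otimes t}$, which is a nonzero object precisely when $t\in\mathbb{Z}$. This gives both the nonvanishing criterion and the description of the generator simultaneously.

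The first computation I would carry out records the behaviour of $\rho$ and of the zonotope $\Sigma(d)$ along the functionals $\lambda_k=e_1^{\ast}+\cdots+e_k^{\ast}$ for $1\le k\le d$. A direct summation gives $\sum_{m=1}^{k}\rho_m=-\tfrac{1}{2}k(d-k)$. On the other hand, pairing $\lambda_k$ with a generator $e_i-e_j$ (with $i>j$) of $\Sigma(d)$ yields $-1$ exactly when $j\le k<i$ and $0$ otherwise, so the minimum of $\lambda_k$ on $\Sigma(d)/2=\sum_{i>j}[-\tfrac{1}{2},\tfrac{1}{2}](e_i-e_j)$ equals $-\tfrac{1}{2}k(d-k)$, attained by taking all coefficients equal to $+1$; for $k=d$ both the minimum and maximum are $0$, reflecting that $\Sigma(d)$ lies in the hyperplane $\{\sum_m x_m=0\}$. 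Substituting $\delta=t\chi_0$, the membership $\chi+\rho-\delta\in\Sigma(d)/2$ then forces, through $\lambda_k$, the inequalities $\sum_{m=1}^{k}\chi_m\ge kt$ for all $k$, together with the equality $\sum_{m=1}^{d}\chi_m=dt$ coming from $k=d$.

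With these constraints in hand the conclusion is immediate: taking $k=1$ gives $\chi_1\ge t$, and since $\chi$ is dominant, i.e. $\chi_1\le\chi_2\le\cdots\le\chi_d$, every $\chi_m\ge t$; combined with $\sum_{m=1}^{d}\chi_m=dt$ this forces $\chi_m=t$ for all $m$, whence $\chi=t\chi_0$ and $t\in\mathbb{Z}$. Conversely, when $t\in\mathbb{Z}$ the weight $\chi=t\chi_0$ is dominant and integral and satisfies $\chi+\rho-\delta=\rho\in\Sigma(d)/2$ (take all coefficients equal to $+1$ in the expression for $\Sigma(d)/2$), so $\chi_0^{\otimes t}$ genuinely lies in $\mathbb{W}_{\delta}(d)$ and the category is nonzero. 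I expect the only real subtlety to be the integrality sharpening: the hyperplane relation $\sum_m\chi_m=dt$ alone yields merely $t\in\tfrac{1}{d}\mathbb{Z}$, and it is precisely the combination of the single face inequality $\chi_1\ge t$ with the dominance of $\chi$ that upgrades this to $t\in\mathbb{Z}$ and pins down $\chi=t\chi_0$.
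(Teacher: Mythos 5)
Your argument is correct and follows essentially the same route as the paper: both reduce the lemma to computing $\bigl(\tfrac{1}{2}\Sigma(d)-\rho+t\chi_0\bigr)\cap M^{+}$ and show by an elementary estimate on the zonotope that this intersection is exactly $\{t\chi_0\}$, forcing $t\in\mathbb{Z}$ by integrality. The only difference is cosmetic — you extract the inequalities $\chi_1\ge t$ and $\sum_m\chi_m=dt$ via the partial-sum functionals $\lambda_k$, whereas the paper expands a general element in coordinates and uses dominance to kill the coefficients $c_{ij}$ directly (and note the minor slip that the minimizing coefficients in $\Sigma(d)/2$ should be $+\tfrac{1}{2}$, not $+1$).
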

\begin{proof}
	Note that we have 
	\begin{align*}
	\frac{1}{2}\Sigma(d)-\rho+t\chi_0=
	\sum_{-1\le c_{ij} \le 0, i>j} c_{ij} \cdot (e_i-e_j) +\sum_{i=1}^d t \cdot e_i. 
		\end{align*}
	Therefore any element $\chi$ in the LHS
is written as 
\begin{align*}
	\chi=(-c_{21}-\cdots -c_{d1}+t)e_1+
	(c_{21}-c_{32}-\cdots -c_{d2}+t)e_2+\cdots 
	+(c_{d1}+\cdots +c_{d d-1}+t)e_d. 
	\end{align*}
If $\chi$ lies in the dominant chamber, we have 
$-c_{21}-\cdots -c_{d1} \le c_{d1}+\cdots +c_{d d-1}$, 
hence 
$c_{21}=\cdots =c_{d1}=c_{d2}=\cdots =c_{d d-1}=0$
as $c_{ij}\le 0$ for all $i, j$. 
By applying the same argument for other 
coefficients of $\chi$, 
we conclude that $c_{ij}=0$ for all $i, j$. 
Therefore we have 
\begin{align*}
	\left(\frac{1}{2}\Sigma(d)-\rho+t\chi_0 \right) \cap M_{\mathbb{R}}^+
	=t \chi_0,
	\end{align*}
and the lemma holds. 
	\end{proof}
Below we set 
\begin{align*}
	\mathbb{W}(d) \cneq \mathbb{W}_{\delta=0}(d) \subset D^b(\mM_Q(d)), 
	\end{align*}
which is generated by $\oO_{\mM_Q(d)}$ by Lemma~\ref{lem:genO}. 
Note that for $t\in \mathbb{Z}$, 
we have $\mathbb{W}_{\delta=t\chi_0}(d)=\mathbb{W}(d) \otimes 
\chi_0^{\otimes t}$. 
\begin{lem}\label{lem:pull}
	The pull-back by the morphism (\ref{mor:MQ}) induces the equivalence 
	\begin{align}\label{piast}
		\pi_Q^{\ast} \colon 
		D^b(M_Q(d)) \stackrel{\sim}{\to}
		\mathbb{W}(d). 
		\end{align}
	\end{lem}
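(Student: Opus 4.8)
The plan is to realize $\pi_Q^{\ast}$ as a fully faithful functor whose essential image is the thick (triangulated and split-closed) subcategory generated by $\oO_{\mM_Q(d)}$, which is exactly $\mathbb{W}(d)$ by Lemma~\ref{lem:genO}. The single cohomological input I would record first is that (\ref{mor:MQ}) is a good moduli space morphism: since $\mM_Q(d)=[\End(V)/\GL(V)]$ is a quotient of the affine scheme $\End(V)$ by the linearly reductive group $\GL(V)$, the morphism $\pi_Q$ is cohomologically affine and satisfies $\pi_{Q\ast}\oO_{\mM_Q(d)}=\oO_{M_Q(d)}$. Combining these gives the key identity
\begin{align*}
	\dR\pi_{Q\ast}\oO_{\mM_Q(d)} \cong \oO_{M_Q(d)},
\end{align*}
which one can also see directly from $\dR\Gamma(\mM_Q(d),\oO)\cong \dR\Gamma(\End(V),\oO)^{\GL(V)}$ being concentrated in degree zero, as $\End(V)$ is affine and taking $\GL(V)$-invariants is exact in characteristic zero.

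Granting this, fully faithfulness is formal. Because $M_Q(d)\cong \mathbb{A}^d$ is smooth, every object of $D^b(M_Q(d))$ is perfect, so $\pi_Q^{\ast}F$ is perfect and in particular lands in $D^b(\mM_Q(d))$. For $F,G \in D^b(M_Q(d))$, adjunction together with the projection formula and the identity above yields
\begin{align*}
	\RHom_{\mM_Q(d)}(\pi_Q^{\ast}F, \pi_Q^{\ast}G)
	\cong \RHom_{M_Q(d)}(F, \dR\pi_{Q\ast}\pi_Q^{\ast}G)
	\cong \RHom_{M_Q(d)}(F, G \dotimes \dR\pi_{Q\ast}\oO_{\mM_Q(d)})
	\cong \RHom_{M_Q(d)}(F, G),
\end{align*}
so $\pi_Q^{\ast}$ is fully faithful.

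It then remains to identify the essential image. Since $\pi_Q^{\ast}$ is exact and fully faithful and $D^b(M_Q(d))$ is idempotent complete, the essential image is a thick subcategory of $D^b(\mM_Q(d))$. As $M_Q(d)\cong \mathbb{A}^d$ is smooth affine, $D^b(M_Q(d))$ is split-generated by $\oO_{M_Q(d)}$ (every coherent sheaf has a finite free resolution); applying $\pi_Q^{\ast}$ and using $\pi_Q^{\ast}\oO_{M_Q(d)}=\oO_{\mM_Q(d)}$ shows that the essential image is split-generated by $\oO_{\mM_Q(d)}$, i.e. it coincides with $\mathbb{W}(d)$ by Lemma~\ref{lem:genO}. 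The only step needing genuine care is the vanishing of the higher direct images in the displayed identity, and this is precisely the cohomological affineness of the good moduli space morphism; everything else in the argument is formal homological algebra.
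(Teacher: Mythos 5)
Your proof is correct and follows essentially the same route as the paper: both arguments rest on the good moduli space property of $\pi_Q$ (so that $\dR\pi_{Q\ast}\oO_{\mM_Q(d)}\cong\oO_{M_Q(d)}$), generation of $D^b(M_Q(d))$ by its structure sheaf, and Lemma~\ref{lem:genO} to identify the image with $\mathbb{W}(d)$. The only difference is presentational — the paper checks the $\Hom$-isomorphism on the single generator $\oO$ and deduces full faithfulness by generation, whereas you run the adjunction and projection formula for arbitrary objects — but the content is identical.
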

\begin{proof}
	As $\pi_Q$ is 
	a good moduli space morphism, 
	it induces the isomorphism 
	\begin{align*}
		\pi_Q^{\ast} \colon \Hom_{M_Q(k)}(\oO_{M_Q(k)}, \oO_{M_Q(k)})
		\stackrel{\cong}{\to} \Hom_{\mM_Q(k)}(\oO_{\mM_Q(k)}, \oO_{\mM_{Q}(k)}). 
		\end{align*}
		Since the triangulated category 
	$D^b(M_Q(d))$ is generated by $\oO_{M_Q(d)}$, it follows that 
	the functor (\ref{piast}) is fully-faithful. 
	By Lemma~\ref{lem:genO}, 
	the functor (\ref{piast}) is also essentially surjective. 	
	\end{proof}

For a one parameter subgroup $\lambda \colon \C \to \GL(V)$, 
we have the diagram of attracting loci
\begin{align}\label{dia:attract}
	\xymatrix{
		\mM_Q(d)^{\lambda \ge 0}
		 \ar[r]^-{p_{\lambda}}	 \ar[d]^-{q_{\lambda}}	&
		\mM_Q(d) \\
	\mM_Q(d)^{\lambda=0}. & 	
	}
\end{align}
For $d=d_1+d_2$, let $\lambda \colon \C \to T \subset \GL(V)$ be given by 
\begin{align}\label{set:lambda}
	\lambda(t)=(\overbrace{t, \ldots, t}^{d_1}, \overbrace{1, \ldots, 1}^{d_2}). 
	\end{align}
Then we have 
\begin{align*}
	\mM_Q(d)^{\lambda =0}=\mM_Q(d_1) \times \mM_Q(d_2)
	\end{align*}
and the functor (\ref{cat:prod}) gives the associative categorified Hall 
product 
\begin{align}\label{ast:1}
\ast=p_{\lambda \ast}q_{\lambda}^{\ast} \colon 
D^b(\mM_Q(d_1)) \boxtimes D^b(\mM_Q(d_2)) \to D^b(\mM_Q(d)). 	
	\end{align}
\begin{rmk}
	Using Proposition~\ref{prop:resol}, 
	one can check that the 
	functor (\ref{ast:1}) induces the functor 
	\begin{align*}
		\ast \colon 
		\overbrace{\mathbb{W}(1) \boxtimes \cdots \boxtimes \mathbb{W}(1)}^d
		\to \mathbb{W}(d)
	\end{align*}
	such that $\mathbb{W}(d)$ is split 
	generated by the image of the above functor. 
	However the above functor is not fully-faithful. 
	\end{rmk}

\subsection{Moduli stacks of representations of framed one loop quiver}
Let us take 
\begin{align*}
	(a, b) \in \mathbb{Z}_{\ge 0}^2, \ r \cneq a-b \ge 0. 
\end{align*}
We denote by $Q_{a, b}$ the extended quiver of $Q$, 
that is the vertex $\{\infty\}$, the $a$-arrows from 
$\infty$ to $1$ and the $b$-arrows from $1$ to $\infty$
are added: 
\[
Q_{2, 1}=
\begin{tikzcd}
	\bullet_{\infty}
	\arrow[r,bend left]
	\arrow[r,bend left=50] &
	\bullet_1
	\arrow[out=50, in=310, loop]
	\arrow[l, bend left]
\end{tikzcd}
\]
Let $A, B, V$ be vector spaces
such that 
\begin{align*}
	\dim A=a, \ \dim B=b, \ \dim V=d. 
	\end{align*} 
We set the following $\GL(V)$-representation 
\begin{align}\label{Yab}
	Y_{a, b}(d) \cneq \Hom(A, V) \oplus \Hom(V, B) \oplus \End(V),
	\end{align}
and form the following quotient stack 
\begin{align*}
	\gG_{a, b}(d) \cneq 
	\left[\left(\Hom(A, V) \oplus \Hom(V, B) \oplus \End(V) \right)/\GL(V)  \right].
	\end{align*}	
The above stack is the $\C$-rigidified moduli stack of $Q_{a, b}$-representations 
with dimension vector $(1, d)$. 
For a one parameter subgroup $\lambda \colon \C \to T$, 
we use the following notation for the diagram of 
attracting loci 
\begin{align}\label{dia:quiver2}
	\xymatrix{
		\gG_{a, b}(d)^{\lambda \ge 0} \ar[r]^-{p_{\lambda}} \ar[d]_-{q_{\lambda}} & \gG_{a, b}(d) \\
		\gG_{a, b}(d)^{\lambda=0}. & 	
	}
\end{align}

There exist two GIT quotients 
with respect to $\chi_0^{\pm 1}$ 
given by open substacks
\begin{align*}
	G_{a, b}^{\pm}(d) \subset \gG_{a, b}(d),
\end{align*}
which are smooth quasi-projective varieties. 
Here $\chi_0$-semistable 
locus $G_{a, b}^{+}(d)$ 
consists of 
\begin{align*}
	(\alpha, \beta, \gamma) \in \Hom(A, V) \oplus \Hom(V, B)
\oplus \End(V)
\end{align*}
such that, by setting $V_{\gamma}$ to be the 
$\mathbb{C}[Q]$-module structure on $V$ determined by $\gamma$, 
the image of 
 $\alpha \colon A \to V$
 generates $V_{\gamma}$ as a $\mathbb{C}[Q]$-module. 
 Similarly 
 $\chi_0^{-1}$-semistable locus 
$G_{a, b}^-(d)$ consists of $(\alpha, \beta, \gamma)$
such that the image of 
$\beta^{\vee} \colon B^{\vee} \to V^{\vee}$
generates $V^{\vee}_{\gamma^{\vee}}$ as 
a $\mathbb{C}[Q]$-module (see~\cite[Lemma~5.1.9]{TocatDT}).  

Let $G_{a, b}(d)$ be the good moduli space
for $\gG_{a, b}(d)$: 
\begin{align*}
	G_{a, b}(d) \cneq 
	(\Hom(A, V) \oplus \Hom(V, B) \oplus \End(V))\ssslash \GL(V). 
	\end{align*}
We have the diagram 
\begin{align}\label{dia:Gflip}
	\xymatrix{
G_{a, b}^+(d) \ar[rd] \ar@{.>}[rr]& & G_{a, b}^-(d) \ar[ld] \\
& G_{a, b}(d) &	
}
	\end{align}
which is a flip if $a>b>0$, flop if $a=b>0$ (see~\cite[Lemma~7.11]{Toddbir}). 

\begin{rmk}\label{rmk:b=0}
	When $b=0$, then 
	$G_{a, 0}^-(d)=\emptyset$ and 
	$G_{a, 0}^+(d)$ is the Quot scheme on $\mathbb{A}^1$
	which parametrizes 
	quotients $\oO_{\mathbb{A}^1}^{\oplus a} \twoheadrightarrow Q$ 
	such that $Q$ is zero-dimensional of length $d$. 
	\end{rmk}
We take the Weyl-invariant norm on $N_{\mathbb{R}}=\mathbb{R}^d$
to be $\lvert \lambda \rvert^2=\lambda_1^2+\cdots+\lambda_d^2$. 
By~\cite[Lemma~5.1.9]{TocatDT}, 
	we have the KN stratifications with respect to $(\chi_0^{\pm}, \lvert \ast \rvert)$
	\begin{align}\label{KN:Gst}
		\gG_{a, b}(d)=
		\sS_0^{\pm} \sqcup \sS_1^{\pm} \sqcup \cdots \sqcup \sS_{d-1}^{\pm} 
		\sqcup G_{a, b}^{\pm}(d)
	\end{align}
	where $\sS_i^{+}$ consists of $(\alpha, \beta, \gamma)$
	such that the image of $\alpha \colon A \to V$
	generates $i$-dimensional 
	$\mathbb{C}[Q]$-submodule of $V_{\gamma}$, 
	$\sS_i^{-}$ consists of 
	$(\alpha, \beta, \gamma)$ such that the 
	image of $\beta^{\vee} \colon B^{\vee} \to V^{\vee}$
	generates $i$-dimensional 
	$\mathbb{C}[Q]$-submodule of $V^{\vee}_{\gamma^{\vee}}$. 
	The associated one parameter subgroups 
	$\lambda_i^{\pm} \colon \C \to T$ are taken as 
		\begin{align}\label{lambdai}
		\lambda_i^{+}(t)=(\overbrace{1, \ldots, 1}^i, \overbrace{t^{-1}, \ldots, t^{-1}}^{d-i}), \ 
		\lambda_i^-(t)=(\overbrace{t, \ldots, t}^{d-i}, \overbrace{1, \ldots, 1}^i)
	\end{align}
with associated slopes (\ref{slope}) to be $\mu_i^{\pm}=\sqrt{d-i}$. 
	
	\subsection{Window subcategories}\label{subsec:Gflip:2}
	For $c \in \mathbb{Z}$, we set 
	\begin{align}\label{Bcd}
		\mathbb{B}_{c}(d) \cneq \{(x_1, x_2, \ldots, x_d) \in M^+ : 
		0 \le x_i \le c-1\}. 
	\end{align}
	We define the triangulated subcategory 
	\begin{align}\label{window:Wc}
		\mathbb{W}_c(d) \subset D^b(\gG_{a, b}(d))
	\end{align}
	to be the smallest thick triangulated subcategory 
	which contains $V(\chi) \otimes \oO_{\gG_{a, b}(d)}$
	for $\chi \in \mathbb{B}_c(d)$. 
	Note that $V(\chi)$ is the Schur power of $V$
	associated with the Young diagram corresponding to $\chi$. 
	\begin{prop}\label{prop:WGD}
		The following composition functors are equivalences
		\begin{align}\label{compose:W}
		 &\mathbb{W}_{a}(d) \subset D^b(\gG_{a, b}(d)) \twoheadrightarrow 
			D^b(G_{a, b}^+(d)), \\
			\notag	&\mathbb{W}_{b}(d) \subset D^b(\gG_{a, b}(d)) \twoheadrightarrow 
			D^b(G^-_{a, b}(d)).
		\end{align}
	\end{prop}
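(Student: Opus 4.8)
The plan is to realize $\mathbb{W}_a(d)$ as a weight window in the sense of Theorem~\ref{thm:window} for the GIT quotient $G_{a,b}^+(d)$, and then to match the two descriptions. First I would record the data of the Kempf--Ness stratification (\ref{KN:Gst}): for the $+$ side the one parameter subgroups are $\lambda_i^+$ from (\ref{lambdai}), whose associated cocharacter is $(0^i,(-1)^{d-i})\in N=\mathbb{Z}^d$ for $0\le i\le d-1$. Using (\ref{etai}) in the representation-theoretic form $\eta_i=\langle\lambda_i,(Y^{\vee})^{\lambda_i>0}-(\mathfrak{g}^{\vee})^{\lambda_i>0}\rangle$ together with the fact that $Y_{a,b}(d)$ in (\ref{Yab}) contains the summand $\End(V)=\mathfrak{g}$, the $\End(V)$ contribution cancels against $\mathfrak{g}^{\vee}$, leaving only the framing part $\Hom(V,A)\oplus\Hom(B,V)$. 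A direct weight count then gives $\eta_i^+=a(d-i)$, and symmetrically $\eta_i^-=b(d-i)$ for $\lambda_i^-$.

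The technical heart is to verify that $V(\chi)\otimes\oO_{\gG_{a,b}(d)}$ satisfies the window condition (\ref{condition:P}) exactly when $\chi\in\mathbb{B}_a(d)$. Writing $\chi=(x_1\le\cdots\le x_d)\in M^+$, the $\lambda_i^+$-weight of a $T$-weight $\mu=(\mu_1,\ldots,\mu_d)$ of $V(\chi)$ is $-\sum_{k>i}\mu_k$, and since the extreme weights of $V(\chi)$ are the permutations of $\chi$, these $\lambda_i^+$-weights range over $[-(x_{i+1}+\cdots+x_d),\,-(x_1+\cdots+x_{d-i})]$. For $\chi\in\mathbb{B}_a(d)$, i.e. $0\le x_k\le a-1$, this interval is contained in $[-(a-1)(d-i),0]$, whose length $(a-1)(d-i)$ is strictly less than $\eta_i^+=a(d-i)$; hence $\tau_i^{+\ast}(V(\chi)\otimes\oO)$ has $\lambda_i^+$-weights inside a window $[m_i,m_i+\eta_i^+)$ for the uniform choice $m_i=-(a-1)(d-i)$, so that $\mathbb{W}_a(d)\subseteq\mathbb{W}_{m_\bullet}^{\chi_0}([Y_{a,b}(d)/\GL(V)])$. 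Conversely, the stratum $i=d-1$ already pins down the box: there $\eta_{d-1}^+=a$ and the weights range over $[-x_d,-x_1]$, so membership in the window $[-(a-1),1)$ is equivalent to $0\le x_1$ and $x_d\le a-1$, i.e. to $\chi\in\mathbb{B}_a(d)$.

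With the matching of generators in hand, Theorem~\ref{thm:window} identifies $\mathbb{W}_{m_\bullet}^{\chi_0}([Y_{a,b}(d)/\GL(V)])$ with $D^b(G_{a,b}^+(d))$ via the restriction functor, so the composition $\mathbb{W}_a(d)\hookrightarrow D^b(\gG_{a,b}(d))\twoheadrightarrow D^b(G_{a,b}^+(d))$ is automatically fully faithful. It remains to prove essential surjectivity, and this is the step I expect to be the main obstacle. Since $Y_{a,b}(d)$ is a $\GL(V)$-representation, $D^b(\gG_{a,b}(d))$ is split generated by $\{V(\chi)\otimes\oO:\chi\in M^+\}$, so their restrictions split generate $D^b(G_{a,b}^+(d))$; the point is to rewrite $V(\chi)|_{G^+}$ for $\chi\notin\mathbb{B}_a(d)$ in terms of the box generators. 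I would do this by a staircase reduction: applying Borel--Weil--Bott through the categorified Hall product (Proposition~\ref{prop:resol}) expresses $p_{\lambda_i^+\ast}q_{\lambda_i^+}^{\ast}$ of a Schur power as a successive extension of Schur powers with shifted highest weights, where the error terms are supported on the unstable strata $\sS_i^+$ and hence vanish after restriction to $G_{a,b}^+(d)$; iterating pushes any out-of-box weight back into $\mathbb{B}_a(d)$. This shows the box generators restrict to a generating set, giving essential surjectivity.

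Finally, the $-$ statement follows from the $+$ statement by the duality interchanging $A\leftrightarrow B$ and $\gamma\mapsto\gamma^{\vee}$, under which the $\chi_0^{-1}$-stable locus $G_{a,b}^-(d)$ and the stratification data $(\lambda_i^-,\eta_i^-=b(d-i))$ become those of the $+$ problem with $a$ and $b$ exchanged; carrying the above argument through with $\mathbb{B}_b(d)$ in place of $\mathbb{B}_a(d)$ yields the equivalence $\mathbb{W}_b(d)\xrightarrow{\sim}D^b(G_{a,b}^-(d))$.
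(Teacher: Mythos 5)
Your computation of $\eta_i^{\pm}$ and the check that every $T$-weight of $V(\chi)$ for $\chi\in\mathbb{B}_a(d)$ lies in a single window of length $\eta_i^+=a(d-i)$ reproduce the paper's fully-faithfulness argument (the paper takes $m_i=-\eta_i^++\varepsilon$ rather than your $m_i=-(a-1)(d-i)$, but both choices contain the weight interval $(-a(d-i),0]$, so this half is fine). The gap is in essential surjectivity, which you rightly flag as the main obstacle but then only sketch. The assertion that ``iterating pushes any out-of-box weight back into $\mathbb{B}_a(d)$'' is exactly the hard combinatorial content, and it is not automatic here because of the loop: the negative $\lambda_i^+$-weights of $Y_{a,b}(d)$ include the weights $e_j-e_k$ with $j>i\ge k$ coming from $\End(V)$, and subtracting such a weight from $\chi$ \emph{raises} the $k$-th coordinate, so a coordinate already equal to $a-1$ is pushed out of the box. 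To make the staircase work you would need a well-founded ordering on dominant weights in which every correction term $V((\chi-\sigma_I)^+)$ with $I\neq\emptyset$ strictly decreases, plus the ``appears exactly once'' hypothesis of Proposition~\ref{prop:resol} so that the vanishing of $p_{\lambda_i^+\ast}q_{\lambda_i^+}^{\ast}(-)$ on $G_{a,b}^+(d)$ actually solves for $V(\chi)|_{G^+}$ in terms of the others. None of this is supplied, and it is comparable in difficulty to the weight analysis carried out in Proposition~\ref{prop:Hall}; in effect you are asserting a special case of the magic window theorem without proof.

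The paper reaches surjectivity by a different and shorter route: for $a=b$ the representation $Y_{a,a}(d)$ is symmetric, the intersection $\bigl(\tfrac12\Sigma_a(d)-\rho+t\chi_0\bigr)\cap M^+$ is computed to be exactly $\mathbb{B}_a(d)$ for $t=a/2-\varepsilon$, and the magic window theorem of~\cite{HLKSAM} gives the equivalence; for $a>b$ one fixes a splitting $A=B\oplus B'$, observes that $G_{a,a}^+(d)\to G_{a,b}^+(d)$ and $G_{a,b}^-(d)\to G_{b,b}^-(d)$ are affine bundles whose sections and pull-backs carry box generators to box generators, and deduces the general case from the symmetric one. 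If you wish to keep the staircase strategy you must either prove the termination lemma or invoke the magic window theorem directly; as written the surjectivity half of your argument is incomplete. (Your duality reduction of the $-$ statement to the $+$ statement with $a$ and $b$ exchanged is consistent with the paper's parallel treatment using $m_i=0$.)
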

\begin{proof}
Let $\lambda_i^+$ be the one parameter 
subgroup in (\ref{lambdai}). 
Then $\eta_i^+$ given in (\ref{etai}) is 
\begin{align*}
	\eta_i^+ &=\langle \lambda_i^+, 
	(\Hom(A, V)^{\vee} \oplus \Hom(V, B)^{\vee} \oplus \End(V)^{\vee})^{\lambda_i^+>0}
	-\End(V)^{\lambda_i^+>0} \rangle \\
	&=a(d-i). 
\end{align*}
Let $\chi'=(x_1', \ldots, x_d')$ be a $T$-weight of $V(\chi)$ for $\chi \in \mathbb{B}_a(d)$. 
Then we have $0\le x_j' \le a-1$ for $1\le j\le d$, 
so
\begin{align*}
	-\eta_i^+ =-a(d-i)<
	\langle \chi', \lambda_i^+ \rangle 
	=-\sum_{j=i+1}^d x_j' \le 0.
\end{align*}
Therefore by setting $m_i=-\eta_i^+ + \varepsilon$ for $0<\varepsilon \ll 1$
and $l=\chi_0$ in (\ref{window:m}), we have
\begin{align}\label{incl:Wab}
	\mathbb{W}_a(d) \subset \mathbb{W}_{m_{\bullet}}^{\chi_0}(\gG_{a, b}(d)) \subset 
	D^b(\gG_{a, b}(d)). 
\end{align} 
It follows that the first composition functor in (\ref{compose:W}) 
is fully-faithful. 
A similar argument also shows that, by setting $m_i=0$, 
the second composition functor in (\ref{compose:W}) is 
also fully-faithful. 

It remains to show that (\ref{compose:W}) are essentially surjective. 
By setting $a=b$ in (\ref{Yab}), 
we have the symmetric $\GL(V)$-representation
$Y_{a, a}$.
Note that $\gG_{a, a}(d)=[Y_{a, a}/\GL(V)]$. 
The subset of weights (\ref{sub:W}) for $Y_{a, a}$ 
is given by 
\begin{align*}
	\Sigma_a(d)=\sum_{-a\le c_{i}\le a}c_i \cdot
	 e_i+\sum_{-1\le c_{ij} \le 1, i>j}c_{ij} \cdot (e_i-e_j). 
	\end{align*}
Therefore we have 
\begin{align*}
	\frac{1}{2}\Sigma_a(d)-\rho+t\chi_0=\sum_{t-a/2\le c_{i}\le t+a/2}c_i \cdot
	e_i+\sum_{-1\le c_{ij} \le 0, i>j}c_{ij} \cdot (e_i-e_j). 
\end{align*}
	Therefore any element $\chi$ in the above set is written as 
\begin{align*}
	\chi=(-c_{21}-\cdots -c_{d1}+c_1)e_1+
	(c_{21}-c_{32}-\cdots -c_{d2}+c_2)e_2+\cdots 
	+(c_{d1}+\cdots +c_{d d-1}+c_d)e_d. 
\end{align*}
We write it as $\chi=\alpha_1 e_1+ \cdots + \alpha_d e_d$. 
If it lies in $M^+_{\mathbb{R}}$, then 
as $c_{ij}\le 0$ we have 
\begin{align*}
	t-\frac{a}{2} \le 
	c_1 \le \alpha_1 \le \cdots \le \alpha_d \le c_d \le t+\frac{a}{2}.
	\end{align*}
It follows that we have 
\begin{align*}
	\left(\frac{1}{2}\Sigma_a(d)-\rho+t\chi_0 \right) \cap M_{\mathbb{R}}^+
	=\left(\sum_{t-a/2\le c_{i}\le t+a/2}c_i \cdot
	e_i\right) \cap M_{\mathbb{R}}^+. 
	\end{align*}
By setting
$t=a/2-\varepsilon$ for $0<\varepsilon \ll 1$, we conclude 
that 
\begin{align}\label{Sigma:a}
	\left(\frac{1}{2}\Sigma_a(d)-\rho+t\chi_0 \right) \cap M^+
	=\{(x_1, x_2, \ldots, x_d)\in M: 0\le x_1 \le \cdots \le x_d \le a-1\}.
	\end{align}
Since $\chi_0$ is $\Sigma_a(d)$-generic
and $\Sigma_a(d)/2-\rho+t\chi_0$ does not contain 
integer points on the boundary, 
the composition (\ref{compose:W}) is an equivalence by Theorem~\ref{compose:W}
when $a=b$. 

When $a>b$, we fix a decomposition into the direct sum $A=B \oplus B'$. 
Then the projection $A \twoheadrightarrow B$ and the inclusion $B\hookrightarrow A$
define the projections $p$ and the zero sections $i$
\begin{align*}
	\xymatrix{
	\gG_{a, a}(d) \ar[r]_-{p} & \ar@/_10pt/[l]_-{i} \gG_{a, b}(d) \ar[r]_-p &
	\gG_{b, b}(d)  \ar@/_10pt/[l]_-{i}. 
}
	\end{align*}
Since the $\chi_0$-stability (resp.~$\chi_0^{-1}$-stability) on $\gG_{a, b}(d)$ 
does not impose constraint 
on $\Hom(V, B)$-factor (resp.~$\Hom(A, V)$-factor), 
we have Cartesian squares
\begin{align*}
	\xymatrix{
G_{a, a}^+(d) \ar[r]_-{p} \ar@<-0.3ex>@{^{(}->}[d]  \diasquare & G_{a, b}^+(d) 
\ar@<-0.3ex>@{^{(}->}[d]  \ar@/_10pt/[l]_-{i}\\
\gG_{a, a}(d) \ar[r]_-{p} & \gG_{a, b}(d) \ar@/_10pt/[l]_-{i}, 
}
\quad 
	\xymatrix{
	G_{a, b}^-(d) \ar[r]_-{p} \ar@<-0.3ex>@{^{(}->}[d] \diasquare & G_{b, b}^-(d) \ar@<-0.3ex>@{^{(}->}[d] \ar@/_10pt/[l]_-{i}\\
	\gG_{a, b}(d) \ar[r]_-{p} & \gG_{b, b}(d) \ar@/_10pt/[l]_-{i}.
}
	\end{align*}
Here the vertical arrows are open immersions. 
Note that each morphism $p$ is an affine bundle. 
We have the functors
\begin{align*}
	i^{\ast} \colon D^b(G_{a, a}^+(d)) \to D^b(G_{a, b}^+(d)), \ 
	p^{\ast} \colon D^b(G_{b, b}^-(d)) \to D^b(G_{a, b}^-(d)). 
	\end{align*}
Since the images of the above functors generate
$D^b(G_{a, b}^+(d))$, $D^b(G_{a, b}^-(d))$ respectively, 
from the essentially surjectivity of the functors (\ref{compose:W})
for $a=b$, we also have the essentially surjectivity of (\ref{compose:W})
for $a>b$. 
\end{proof}

\begin{rmk}\label{rmk:Wab}
	The proof of Proposition~\ref{prop:WGD}
	implies that the first inclusion in (\ref{incl:Wab}) is an equal, 
	i.e. 
	\begin{align*}
		\mathbb{W}_a(d)=\mathbb{W}_{m_i=-\eta_i^+ +\varepsilon}(\gG_{a, b}(d)), \ 
		\mathbb{W}_b(d)=\mathbb{W}_{m_i=0}(\gG_{a, b}(d)). 
		\end{align*}
	This fact will be used in Lemma~\ref{lem:tauy}. 
	\end{rmk}

\subsection{Computations of categorified Hall products}
Let $\lambda \colon \C \to T \subset \GL(V)$ be the one parameter subgroup 
given by 
\begin{align}\label{set:lambda2}
	\lambda(t)=(\overbrace{t, \ldots, t}^{d_1}, 1, \ldots, 1). 
	\end{align}
Then we have 
\begin{align*}
	\gG_{a, b}^{\lambda=0}(d)=
\mM_Q(d_1) \times \gG_{a, b}(d-d_1).
	\end{align*}
We have the diagram of attracting loci (\ref{dia:quiver2})
and the associated categorified Hall product 
\begin{align}\label{caH1}
	\ast \colon 
	D^b(\mM_Q(d_1)) \boxtimes D^b(\gG_{a, b}(d-d_1)) \to 
	D^b(\gG_{a, b}(d)). 
	\end{align}
By the iteration, we have the categorified Hall product 
\begin{align}\label{caH2}
	\ast \colon 
	D^b(\mM_Q(d_1)) \boxtimes \cdots \boxtimes D^b(\mM_Q(d_l))
	\boxtimes D^b\left(\gG_{a, b}\left(d-d_1-\cdots -d_l\right)\right)
	\to D^b(\gG_{a, b}(d)). 
	\end{align}
We remark that, for $A_i \in D^b(\mM_Q(d_i))$
and $B \in D^b(\gG_{a, b}(d-d_1-\cdots -d_l))$, 
 the above functor satisfies that 
\begin{align}\label{ast:chi0}
	(A_1 \ast \cdots \ast A_l \ast B) \otimes \chi_0 \cong
	(A_1 \otimes \chi_0) \ast \cdots (A_l \otimes \chi_0) \ast
	(B \otimes \chi_0). 
	\end{align}
Here $\chi_0$ in the LHS is the determinant character for $\GL(d)$
and by abuse of notation $\chi_0$ in the RHS 
are determinant characters for $\GL(d_i)$ and $\GL(d-d_1-\cdots -d_l)$. 
The above isomorphism follows immediately from the definition of
categorical Hall products. 

We fix $c>b$. 
For $d'<d$, 
we fix the following embedding 
\begin{align}\label{emb:Bc}
	\mathbb{B}_c(d') \hookrightarrow \mathbb{B}_c(d), \ 
	(x_{d-d'+1}, \ldots, x_d) \mapsto (x_1=\cdots=x_{d-d'}=0, x_{d-d'+1}, \ldots, x_d). 
\end{align}
We regard an element of $\mathbb{B}_c(d')$ as an element of $\mathbb{B}_c(d)$
by the above embedding. 
For $0\le k\le d$, 
let $\mathbb{B}_{c, k}(d) \subset \mathbb{B}_{c}(d)$ be the subset defined by 
\begin{align*}
	\mathbb{B}_{c, k}(d)=\{(x_1, \ldots, x_d) \in \mathbb{B}_c(d) : 
	x_1=\cdots=x_k=0, x_{k+1}>0\}. 
	\end{align*}
Note that we have $\mathbb{B}_{c, d}(d)=\{0\}$
and 
\begin{align*}
	\mathbb{B}_{c, 0}(d)=\mathbb{B}_{c-1}(d)+\chi_0
	\end{align*}
as $\chi_0=(1, 1, \ldots, 1)$. 
We have the decomposition into the disjoint union 
\begin{align*}
	\mathbb{B}_c(d)=\mathbb{B}_{c, 0}(d) \sqcup \mathbb{B}_{c, 1}(d) \sqcup \cdots 
	\sqcup \mathbb{B}_{c, d}(d). 
	\end{align*}
For $d'<d$ and $d-d'\le k\le d$, the embedding (\ref{emb:Bc}) induces the bijection 
\begin{align}\label{bij:k}
	\mathbb{B}_{c, k-d+d'}(d') \stackrel{\cong}{\to}\mathbb{B}_{c, k}(d).
	\end{align}
We define the subcategory 
\begin{align*}
	\mathbb{W}_{c, k}(d) \subset \mathbb{W}_{c}(d)
	\end{align*}
to be split generated by $V(\chi) \otimes \oO_{\gG_{a, b}(d)}$
for $\chi \in \mathbb{B}_{c, k}(d)$. 

\begin{prop}\label{prop:Hall}
	For $1\le k\le d$ and $\chi \in \mathbb{B}_{c, 0}(d-k)$, 
	the object 
	\begin{align*}
		\oO_{\mM_Q(k)} \ast (V(\chi) \otimes \oO_{\gG_{a, b}(d-k)})
		\in D^b(\gG_{a, b}(d))
		\end{align*}
	is generated by 
	$V(\chi) \otimes \oO_{\gG_{a, b}(d)}$, where $\chi$ is 
	regarded as an element of $\mathbb{B}_{c, k}(d)$ by (\ref{bij:k}), 
	 and $V(\chi') \otimes \oO_{\gG_{a, b}(d)}$
	for $\chi' \in \mathbb{B}_{c, <k}(d)$. 
	Moreover $V(\chi) \otimes \oO_{\gG_{a, b}(d)}$ appears exactly once. 
	\end{prop}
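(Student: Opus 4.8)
The plan is to realize the external product as the structure sheaf twisted by a single irreducible $G^{\lambda=0}$-representation, and then apply Proposition~\ref{prop:resol}. Via the embedding (\ref{emb:Bc}) I regard $\chi$ as the dominant weight $\widetilde{\chi}=(0,\ldots,0,\chi)\in \mathbb{B}_{c,k}(d)\subset M^+$, whose first $k$ entries vanish. With $\lambda$ as in (\ref{set:lambda2}) for $d_1=k$, we have $\gG_{a,b}(d)^{\lambda=0}=\mM_Q(k)\times\gG_{a,b}(d-k)$ and $G^{\lambda=0}=\GL(V_1)\times\GL(V_2)$ for the block decomposition $V=V_1\oplus V_2$ with $\dim V_1=k$. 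Under this identification $\oO_{\mM_Q(k)}\boxtimes(V(\chi)\otimes\oO_{\gG_{a,b}(d-k)})$ is precisely $V(\widetilde{\chi})\otimes\oO_{\gG_{a,b}(d)^{\lambda=0}}$, so the object in question equals $p_{\lambda \ast}q_{\lambda}^{\ast}(V(\widetilde{\chi})\otimes\oO)$ and is governed by Proposition~\ref{prop:resol}.

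The weights $\beta\in\wt_T(Y_{a,b}(d))$ with $\langle\beta,\lambda\rangle<0$ are $-e_j$ for $j\le k$ (multiplicity $b$, from $\Hom(V,B)$) and the positive roots $e_i-e_j$ for $j\le k<i$ (multiplicity one, from $\End(V)$). Thus Proposition~\ref{prop:resol} presents the Hall product as a successive extension of $V((\widetilde{\chi}-\sigma_I)^{+})\otimes\oO_{\gG_{a,b}(d)}[\sharp I-l(I)]$ over finite subsets $I$ of this weight set, and $I=\emptyset$ contributes exactly $V(\widetilde{\chi})\otimes\oO$, the asserted generator lying in $\mathbb{B}_{c,k}(d)$. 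Both remaining assertions then reduce to a single combinatorial claim: for $I\ne\emptyset$ with $(\widetilde{\chi}-\sigma_I)^{+}\ne 0$, the dominant weight $(\widetilde{\chi}-\sigma_I)^{+}$ has strictly fewer than $k$ leading zeros. Since $\widetilde{\chi}$ has exactly $k$ leading zeros, the claim gives $(\widetilde{\chi}-\sigma_I)^{+}\ne\widetilde{\chi}$, so the last sentence of Proposition~\ref{prop:resol} yields the multiplicity-one statement, while the same claim places every other nonzero summand in $\mathbb{B}_{c,<k}(d)$.

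To set this up I would pass to $\overline{\mu}_i\cneq(\widetilde{\chi}-\sigma_I)_i+i$, so that $\overline{\mu}_j=j+\alpha_j+s_j$ for $j\le k$ and $\overline{\mu}_i=x_i+i-t_i$ for $i>k$; here $\alpha_j\le b$ counts the chosen weights $-e_j$, while $s_j$, $t_i$ are the out- and in-degrees of the chosen roots $e_i-e_j$ regarded as a simple bipartite graph between $\{1,\ldots,k\}$ and $\{k+1,\ldots,d\}$, so $t_i\le k$. A nonzero summand forces $\overline{\mu}$ to be regular, i.e. to consist of distinct positive integers; then its $i$-th smallest entry is $\ge i$, giving $(\widetilde{\chi}-\sigma_I)^{+}\ge 0$, and the bounds $\overline{\mu}_j\le d+b$ and $\overline{\mu}_i\le x_i+i\le(c-1)+d$ together with $c>b$ bound the largest coordinate by $c-1$; hence each nonzero summand already lies in $\mathbb{B}_c(d)$. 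The number of leading zeros of $(\widetilde{\chi}-\sigma_I)^{+}$ is then the largest $p$ for which $\{1,\ldots,p\}$ forms the bottom of the multiset $\{\overline{\mu}_i\}$.

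The count of leading zeros is the step I expect to be the main obstacle. I would prove by induction on $m$ that if the values $1,\ldots,m$ all occur among the $\overline{\mu}_i$, then the low vertices $1,\ldots,m$ are isolated (carry no chosen weight). Since $x_i+i\ge k+2$ for $i>k$, a high index $i$ realizing $\overline{\mu}_i=m\le k$ would need in-degree $t_i=x_i+i-m\ge(k+2)-m$; but, the vertices $1,\ldots,m-1$ being isolated, only the $k-m+1$ low vertices $m,\ldots,k$ can feed $i$, and $(k+2)-m>k-m+1$ is a contradiction. Hence $\overline{\mu}_m=m$, forcing $\alpha_m=s_m=0$. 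Running the induction to $m=k$ shows that $p\ge k$ forces all low vertices isolated, whence no weight $-e_j$ and no root $e_i-e_j$ is chosen, i.e. $I=\emptyset$; equivalently $I\ne\emptyset$ yields strictly fewer than $k$ leading zeros, which is the combinatorial claim. The essential inputs are the in-degree bound $t_i\le k$ and the strict gap $x_i\ge 1$ coming from $\chi\in\mathbb{B}_{c,0}(d-k)$.
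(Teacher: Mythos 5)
Your proposal is correct and follows essentially the same route as the paper: both reduce to Proposition~\ref{prop:resol} for the one parameter subgroup (\ref{set:lambda2}) with the same set of negative weights, establish $(\chi-\sigma_I)^+\in\mathbb{B}_c(d)$ from the bounds $s_j\le b$, $t_i\le k$, and then run the same induction showing that the presence of the values $1,\dots,m$ forces $s_m=s_{im}=0$, using exactly the inputs $t_i\le k$ and $x_i>0$. Your shifted coordinates $\overline{\mu}_i=(\widetilde{\chi}-\sigma_I)_i+i$ are just the paper's $\chi-\sigma_I+\rho$ translated by $\tfrac{d+1}{2}$, so the "isolated low vertices" induction is a repackaging of the paper's inequalities (\ref{ineq:chi1})--(\ref{ineq:chi2}).
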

	\begin{proof}
		Let $Y_{a, b}(d)$ be the $\GL(V)$-representation (\ref{Yab}), 
		and $\lambda$ the one parameter subgroup (\ref{set:lambda2}) 
		for $d_1=k$. 
		Then we have 
		\begin{align*}
			\{ \beta \in \wt_T(Y_{a, b}(d)) : 
			\langle \lambda, \beta <0 \rangle \}
			=\bigcup_{1\le i\le k} \{\overbrace{-e_i, \ldots, -e_i}^b\}
			\bigcup_{1\le j\le k, k<i\le d}
			\{(e_i-e_j)\}. 
			\end{align*}
		Let $I$ be a subset of weights in the above set. 
		Then in the notation of Proposition~\ref{prop:resol}, 
		for $\chi=(0, \ldots, 0, x_{k+1}, \ldots, x_d)
		\in \mathbb{B}_{c, k}(d)$, the element $\chi-\sigma_I+\rho$ 
		is of the form 
		\begin{align}\label{chi:rho}
			\chi-\sigma_I+\rho=
		\sum_{i=k+1}^d x_i e_i +\sum_{i=1}^k s_i e_i 
			-\sum_{k<i\le d, 1\le j\le k}s_{ij}(e_i-e_j) +\frac{1}{2}\sum_{i>j}(e_i-e_j)
			\end{align}
		for some 
		$s_{i} \in \mathbb{Z}$ with $0\le s_i \le b$
		and $s_{ij} \in \{0, 1\}$. 
	Therefore we have 
	\begin{align}\label{chisigma}
		\chi-\sigma_I+\rho \in 
		\left(\sum_{-c/2 \le c_i \le c/2}c_i e_i+
		\sum_{i>j, -1/2 \le c_{ij} \le 1/2}
		c_{ij}(e_i-e_j) \right) + t\chi_0
		\end{align}	
		for $t=c/2-\varepsilon$ with $0<\varepsilon \ll 1$. 
		Suppose that $(\chi-\sigma_I)^+ \in M^+$ is defined. 
		By its definition, 
		there is unique $w \in S_d$
		such that  
		\begin{align}\label{id:Weyl}
			w(\chi-\sigma_I+\rho)
			=(\chi-\sigma_I)^++\rho. 
			\end{align}
		Since the right hand side of (\ref{chisigma})
		is invariant under the Weyl group action, 
		we have 
		\begin{align*}
			(\chi-\sigma_I)^+
			&\in \left(\sum_{t-c/2 \le c_i \le t+c/2}c_i e_i+
			\sum_{i>j, -1 \le c_{ij} \le 0}
			c_{ij}(e_i-e_j) \right) \cap M^+ \\
			&=\{(x_1'', \ldots, x_d'') \in M : 
			0\le x_1''\le \cdots \le x_d'' \le c-1\}. 
			\end{align*}
		Here the last identity follows as in (\ref{Sigma:a}).
		Therefore we have $(\chi-\sigma_I)^+ \in \mathbb{B}_c(d)$. 
		
		We show that $(\chi-\sigma_I)^+ \in \mathbb{B}_{c, \le k}(d)$, 
		and $(\chi-\sigma_I)^+ \in \mathbb{B}_{c, k}(d)$ if and 
		only if $I=\emptyset$. 
		Then the proposition follows from Proposition~\ref{prop:resol}.
		  Let us write 
		  \begin{align*}
		  	(\chi-\sigma_I)^+=\sum_{i=1}^d x_i' e_i, \ 
		  	0\le x_1' \le \cdots \le x_{d}' \le c-1. 
		  	\end{align*}
	  	Then by setting $y_i=x_i'-d/2-1/2+i$, we have 
	  	\begin{align*}
	  		(\chi-\sigma_I)^++\rho=\sum_{i=1}^d y_i e_i, \ 
	  	-\frac{d-1}{2} \le y_1<y_2<\cdots <y_d <c+\frac{d-1}{2}. 
	  	\end{align*}
  	Note that $(\chi-\sigma_I)^+ \in \mathbb{B}_{c, 0}(d)$ if and only 
  	if $y_i>-(d-1)/2$ for all $i$. 
  	
  	Let $(\chi-\sigma_I+\rho)_{e_i}$ be the coefficient of 
  	$(\chi-\sigma_I+\rho)$ at $e_i$. 
  	From (\ref{chi:rho}), for $1\le j\le k$ we have 
  	\begin{align}\label{ineq:chi1}
  		(\chi-\sigma_I+\rho)_{e_j}
  		=-\frac{d}{2}-\frac{1}{2}+j+s_j+\sum_{i=k+1}^d s_{ij} 
  		\ge -\frac{d-1}{2} 
  		\end{align}
  	and the equality holds if and only if $j=1$, $s_1=0$ and $s_{i1}=0$ for 
  	all $k<i \le d$. 
  	Also for $k<i\le d$, we have 
  	\begin{align}\label{ineq:chi2}
  			(\chi-\sigma_I+\rho)_{e_i}
  		=-\frac{d}{2}-\frac{1}{2}+i +x_i-\sum_{j=1}^k s_{ij}>-\frac{d-1}{2}
  		+i-k-1 \ge -\frac{d-1}{2}. 
  		\end{align}
  	Here the first inequality is strict since $x_i>0$. 
  	Therefore by the identity (\ref{id:Weyl}), 
  	we have either $(\chi-\sigma_I)^+ \in \mathbb{B}_{c, 0}(d)$, 
  	or $s_1=s_{i1}=0$ for all $k<i\le d$. 
  	
  	Suppose that 
  	$s_1=s_{i1}=0$ for all $k<i\le d$, so that 
  	$y_1=-(d-1)/2$. 
  	For $2\le j\le k$, 
  	from (\ref{ineq:chi1}) we have 
  	\begin{align*}
  		(\chi-\sigma_I+\rho)_{e_j} \ge -\frac{d-3}{2}
  		\end{align*}
  	and the equality holds only if $j=2$, $s_2=0$ 
  	and $s_{i2}=0$ for $k<i\le d$. 
  	Moreover for $k<i\le d$, the inequality (\ref{ineq:chi2}) is improved 
  	as 
  	\begin{align*}
  		(\chi-\sigma_I+\rho)_{e_i}
  	=-\frac{d}{2}-\frac{1}{2}+i +x_i-\sum_{j=2}^k s_{ij}>-\frac{d-3}{2}
  	+i-k-1 \ge -\frac{d-3}{2}.
  	\end{align*}
  	It follows that we have either
  	$y_2>-(d-3)/2$, i.e. 
  	 $(\chi-\sigma_I)^+ \in \mathbb{B}_{c, 1}(d)$,
  	or $s_2=s_{i2}=0$ for all $k<i\le d$. 
  	
  	Repeating the above argument, we conclude that 
  	$(\chi-\sigma_I)^+ \in \mathbb{B}_{c, <k}(d)$, 
  	or $s_1=\cdots =s_k=0$ and $s_{ij}=0$ for all $1\le j\le k$
  	and $k<i\le d$. 
  	In the latter case, we have $I=\emptyset$ and 
  	$(\chi-\sigma_I)^+=\chi \in \mathbb{B}_{c, k}(d)$. 
  	\end{proof}
  	
  	\begin{lem}\label{lem:gen}
  		The subcategory 
  		$\mathbb{W}_c(d) \subset D^b(\gG_{a, b}(d))$ is generated by 
  		$\mathbb{W}(k) \ast (\mathbb{W}_{c-1}(d-k) \otimes \chi_0)$
  		for $0\le k\le d$. 
  		\end{lem}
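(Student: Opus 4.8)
The plan is to rewrite both sides in terms of the explicit generators $V(\chi)\otimes\oO_{\gG_{a,b}(\bullet)}$ and then run an ascending induction on the index $k$, driven entirely by the triangular structure recorded in Proposition~\ref{prop:Hall}. First I would reduce the right-hand side. Since $\mathbb{W}(k)$ is generated by $\oO_{\mM_Q(k)}$ (Lemma~\ref{lem:genO}) and the categorified Hall product (\ref{caH1}) is an exact bifunctor, the subcategory $\cC$ generated by $\mathbb{W}(k)\ast(\mathbb{W}_{c-1}(d-k)\otimes\chi_0)$ over $0\le k\le d$ is already generated by the objects $\oO_{\mM_Q(k)}\ast(V(\chi)\otimes\oO_{\gG_{a,b}(d-k)})$. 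Here I use that tensoring by the determinant character shifts the highest weight, $V(\mu)\otimes\chi_0\cong V(\mu+\chi_0)$, so that $\mathbb{W}_{c-1}(d-k)\otimes\chi_0$ is split generated by $V(\chi)\otimes\oO_{\gG_{a,b}(d-k)}$ for $\chi\in\mathbb{B}_{c-1}(d-k)+\chi_0=\mathbb{B}_{c,0}(d-k)$.

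The inclusion $\cC\subseteq\mathbb{W}_c(d)$ is then immediate from Proposition~\ref{prop:Hall}: for $\chi\in\mathbb{B}_{c,0}(d-k)$ the object $\oO_{\mM_Q(k)}\ast(V(\chi)\otimes\oO)$ carries a finite filtration whose graded pieces are shifts of $V(\chi')\otimes\oO_{\gG_{a,b}(d)}$ with $\chi'\in\mathbb{B}_c(d)$, hence lies in $\mathbb{W}_c(d)$; for $k=0$ the Hall product with the zero-dimensional factor is the identity and the claim is tautological.

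For the reverse inclusion it suffices, using the disjoint decomposition $\mathbb{B}_c(d)=\bigsqcup_{k=0}^d\mathbb{B}_{c,k}(d)$, to show $V(\chi)\otimes\oO_{\gG_{a,b}(d)}\in\cC$ for every $\chi\in\mathbb{B}_{c,k}(d)$, which I would prove by induction on $k$. The base case $k=0$ is exactly the $k=0$ term of $\cC$, namely $\mathbb{W}_{c-1}(d)\otimes\chi_0$, which is split generated by these objects. For the inductive step, given $\chi\in\mathbb{B}_{c,k}(d)$ let $\bar\chi\in\mathbb{B}_{c,0}(d-k)$ be its preimage under the bijection (\ref{bij:k}). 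The generator $G\cneq\oO_{\mM_Q(k)}\ast(V(\bar\chi)\otimes\oO)$ lies in $\cC$, and by Proposition~\ref{prop:Hall} it admits a finite filtration $0=G_0\subset G_1\subset\cdots\subset G_m=G$ in which the copy $V(\chi)\otimes\oO_{\gG_{a,b}(d)}$ (in cohomological degree $0$, arising from the term $I=\emptyset$ of Proposition~\ref{prop:resol}) occurs exactly once, all remaining graded pieces being shifts of $V(\chi')\otimes\oO$ with $\chi'\in\mathbb{B}_{c,<k}(d)$.

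The main obstacle is to extract this single copy of $V(\chi)\otimes\oO$ from the filtration, and the point requiring care is precisely the multiplicity-one assertion of Proposition~\ref{prop:Hall}. Since $\cC$ is thick triangulated and, by the inductive hypothesis together with closure under shifts, every piece $V(\chi')\otimes\oO$ with $\chi'\in\mathbb{B}_{c,<k}(d)$ already lies in $\cC$, I would peel these off using the two-out-of-three property: say $P_{i_0}=V(\chi)\otimes\oO$ is the distinguished graded piece. Descending from $G_m=G\in\cC$ through the triangles $G_{i-1}\to G_i\to P_i$ with $P_i\in\cC$ for $i>i_0$ gives $G_{i_0}\in\cC$; meanwhile $G_{i_0-1}$ is an iterated extension of pieces in $\cC$, hence lies in $\cC$; the triangle $G_{i_0-1}\to G_{i_0}\to P_{i_0}$ then forces $V(\chi)\otimes\oO\in\cC$. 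Because $V(\chi)$ appears with multiplicity exactly one, the peeling terminates with a single uncancelled copy, completing the induction and hence showing $\mathbb{W}_c(d)\subseteq\cC$.
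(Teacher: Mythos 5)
Your proposal is correct and follows essentially the same route as the paper: one inclusion via Proposition~\ref{prop:Hall}, then generation by induction on $k$ through the stratification $\mathbb{B}_c(d)=\bigsqcup_k \mathbb{B}_{c,k}(d)$, peeling the filtration of $\oO_{\mM_Q(k)}\ast(V(\chi)\otimes\oO)$ down to the single copy of $V(\chi)\otimes\oO_{\gG_{a,b}(d)}$. You make explicit the role of the multiplicity-one statement in the peeling step, which the paper leaves implicit, but the argument is the same.
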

  	\begin{proof}
  		Since $\mathbb{W}_{c-1}(d-k) \otimes \chi_0=\mathbb{W}_{c, 0}(d-k)$
  		and $\mathbb{W}(k)$ is generated by $\oO_{\mM_Q(k)}$ by 
  		Lemma~\ref{lem:genO}, we have 
  		\begin{align}\label{incl:W}
  			\mathbb{W}(k) \ast (\mathbb{W}_{c-1}(d-k) \otimes \chi_0)
  			\subset \mathbb{W}_c(d), \ 0\le k\le d
  			\end{align}
  		by Proposition~\ref{prop:Hall}. 
  		It is enough to show that for 
  		any $\chi \in \mathbb{B}_c(d)$
  		the object $V(\chi) \otimes \oO_{\gG_{a, b}}(d)$
  		 is generated by the 
  		LHS in (\ref{incl:W}) for $0\le k\le d$. 
  		If $\chi \in \mathbb{B}_{c, 0}(d)$, 
  		then $V(\chi) \otimes \oO_{\gG_{a, b}}(d)$
  		is an object in the LHS in (\ref{incl:W}) for $k=0$. 
  		For $\chi \in \mathbb{B}_{c, k}(d)$ with $k>0$, 
  		by Proposition~\ref{prop:Hall}
  		$V(\chi) \otimes \oO_{\gG_{a, b}}(d)$ is generated by 
  		$\mathbb{W}(k) \ast \mathbb{W}_{c, 0}(d-k)=\mathbb{W}(k) \ast (\mathbb{W}_{c-1}(d-k) \otimes \chi_0)$
  		and $V(\chi') \otimes \oO_{\gG_{a, b}(d)}$ for $\chi' \in 
  		\mathbb{B}_{c, <k}(d)$. 
  		Therefore by the induction of $k$, 
  		$V(\chi) \otimes \oO_{\gG_{a, b}}(d)$ is generated by the LHS in (\ref{incl:W}). 
  				\end{proof}
	
	\begin{prop}\label{prop:gen}
		The subcategory 
		$\mathbb{W}_c(d) \subset D^b(\gG_{a, b}(d))$
		is generated by the subcategories 
		\begin{align}\label{Wdi}
			\mathbb{W}(d_{\bullet}) \cneq 
			\mathbb{W}(d_1) \ast (\mathbb{W}(d_2) \otimes \chi_0) \ast
			\cdots \ast (\mathbb{W}(d_l) \otimes \chi_0^{l-1}) \ast 
			(\mathbb{W}_b(d-d_1-\cdots -d_l) \otimes \chi_0^{l})
			\end{align}
		for $l=c-b >0$ and 
		 $(d_1, \ldots, d_l) \in \mathbb{Z}_{\ge 0}^l$. 
				\end{prop}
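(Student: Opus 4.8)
The plan is to deduce the proposition from Lemma~\ref{lem:gen} by a single induction that peels off one Hall factor at a time, iterating exactly $l=c-b$ times while tracking the $\chi_0$-twists through the compatibility (\ref{ast:chi0}). First I would introduce, for each $0\le m\le l$, the thick triangulated subcategory $\mathbb{W}^{(m)}(d)\subset D^b(\gG_{a,b}(d))$ generated by the subcategories
\begin{align*}
	\mathbb{W}(d_1) \ast (\mathbb{W}(d_2) \otimes \chi_0) \ast \cdots \ast (\mathbb{W}(d_m) \otimes \chi_0^{m-1}) \ast (\mathbb{W}_{c-m}(d-d_1-\cdots -d_m) \otimes \chi_0^m)
\end{align*}
over all $(d_1,\ldots,d_m)\in\mathbb{Z}_{\ge 0}^m$, with the convention that a factor $\mathbb{W}_{c-m}(e)$ with $e<0$ is zero (so only tuples with $d_1+\cdots+d_m\le d$ contribute). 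The assertion to prove by induction on $m$ is that $\mathbb{W}^{(m)}(d)=\mathbb{W}_c(d)$ for all $d$. The base case $m=0$ is the tautology $\mathbb{W}^{(0)}(d)=\mathbb{W}_c(d)$, and the terminal case $m=l$ is exactly the proposition, since $c-l=b$ turns the last factor into $\mathbb{W}_b(d-d_1-\cdots -d_l)\otimes\chi_0^l$ and reproduces (\ref{Wdi}).

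For the inductive step, suppose $\mathbb{W}^{(m)}(d)=\mathbb{W}_c(d)$ for some $m<l$. Since $m<l=c-b$ we have $c-m>b$, so Lemma~\ref{lem:gen} applies to the parameter $c-m$ and shows that $\mathbb{W}_{c-m}(d-d_1-\cdots-d_m)$ is generated by $\mathbb{W}(d_{m+1}) \ast (\mathbb{W}_{c-m-1}(d-d_1-\cdots-d_{m+1}) \otimes \chi_0)$ for $0\le d_{m+1}\le d-d_1-\cdots-d_m$. Tensoring this generation statement by the line bundle $\chi_0^m$ and applying (\ref{ast:chi0}), the last factor $\mathbb{W}_{c-m}(\cdots)\otimes\chi_0^m$ becomes generated by
\begin{align*}
	(\mathbb{W}(d_{m+1}) \otimes \chi_0^m) \ast (\mathbb{W}_{c-m-1}(d-d_1-\cdots-d_{m+1}) \otimes \chi_0^{m+1}).
\end{align*}
Substituting this back into the generators of $\mathbb{W}^{(m)}(d)$ and using associativity of the Hall product identifies these generators with those of $\mathbb{W}^{(m+1)}(d)$, giving $\mathbb{W}^{(m)}(d)=\mathbb{W}^{(m+1)}(d)=\mathbb{W}_c(d)$ and completing the induction.

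Two mechanical points justify the substitution, and I would record them once. The Hall product $\ast=p_{\lambda\ast}q_\lambda^{\ast}$ is a triangulated functor in each variable ($q_\lambda^{\ast}$ exact, $p_{\lambda}$ proper), hence it carries generators of an input category to generators of its image; and tensoring by $\chi_0^m$ is an autoequivalence, so it too preserves generation. Together they let me replace the last Hall factor by its Lemma~\ref{lem:gen} generators inside the larger product without disturbing the outer factors. The only delicate point is the bookkeeping, and this is where I would focus the care: one must check that the recursion parameter $c-m$ stays strictly above $b$ so that Lemma~\ref{lem:gen} remains applicable, which is precisely what forces the iteration to stop at $m=l$ where $c-l=b$; and one must verify that the powers of $\chi_0$ accumulate as in (\ref{Wdi}), each application of (\ref{ast:chi0}) advancing the twist on the freshly split-off $\mathbb{W}(d_{m+1})$ factor to $\chi_0^m$ and on the new tail to $\chi_0^{m+1}$, matching the target pattern exactly.
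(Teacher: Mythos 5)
Your proposal is correct and is essentially the paper's own argument: the paper proves the statement by induction on $c$, applying Lemma~\ref{lem:gen} once to split off the first Hall factor and invoking the inductive hypothesis for $\mathbb{W}_{c-1}(d-d_1)$ twisted by $\chi_0$ via (\ref{ast:chi0}), which is exactly your induction on $m$ unrolled from the other end. Your explicit remarks that $c-m>b$ keeps Lemma~\ref{lem:gen} applicable and that exactness of $p_{\lambda\ast}q_\lambda^{\ast}$ and the autoequivalence $\otimes\chi_0^m$ preserve generation are the same (implicit) justifications the paper relies on.
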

	\begin{proof}
		Suppose that the proposition holds for $c-1$.
		Then for any $d_1 \ge 0$, the category 
		$\mathbb{W}_{c-1}(d-d_1)$ is generated by 
		\begin{align*}
			\mathbb{W}(d_2) \ast (\mathbb{W}(d_3) \otimes \chi_0) \ast
			\cdots \ast (\mathbb{W}(d_l) \otimes \chi_0^{l-2}) \ast 
			(\mathbb{W}_b(d-d_1-\cdots -d_l) \otimes \chi_0^{l-1})
		\end{align*}
	for $(d_2, \ldots, d_l) \in \mathbb{Z}_{\ge 0}^{l-1}$. 
	Then by Lemma~\ref{lem:gen}, 
	$\mathbb{W}_c(d)$ is generated by 
	\begin{align*}
&\mathbb{W}(d_1) \ast	\left\{	\left(\mathbb{W}(d_2) \ast (\mathbb{W}(d_3) \otimes \chi_0) \ast
		\cdots \ast (\mathbb{W}(d_l) \otimes \chi_0^{l-2}) \ast 
		(\mathbb{W}_b(d-d_1-\cdots -d_l) \otimes \chi_0^{l-1}) \right) \otimes \chi_0
		\right\} \\
		&=	\mathbb{W}(d_1) \ast (\mathbb{W}(d_2) \otimes \chi_0) \ast
		\cdots \ast (\mathbb{W}(d_l) \otimes \chi_0^{l-1}) \ast 
		(\mathbb{W}_b(d-d_1-\cdots -d_l) \otimes \chi_0^{l})
		\end{align*}
	for
	$(d_1, d_2, \ldots, d_l) \in \mathbb{Z}_{\ge 0}^l$. 
	Then the proposition holds by the induction of $c$. 
		\end{proof}
	
	\subsection{Semiorthogonal decomposition}
	\begin{prop}\label{prop:sod}
		For each $0\le k\le d$, the functor 
		\begin{align}\label{ast:ff}
		\ast \colon \mathbb{W}(k) \boxtimes 
		(\mathbb{W}_{c-1}(d-k) \otimes \chi_0)) \to 
		\mathbb{W}_c(d)
		\end{align}
	is fully-faithful, such that we have
	the semiorthogonal decomposition 
	\begin{align*}
		\mathbb{W}_c(d)=
		\langle \mathbb{W}(d) \ast (\mathbb{W}_{c-1}(0) \otimes \chi_0), \ 
		\mathbb{W}(d-1) \ast (\mathbb{W}_{c-1}(1) \otimes \chi_0), \cdots, 
		\mathbb{W}_{c-1}(d) \otimes \chi_0 
		  \rangle. 
		\end{align*}		
		\end{prop}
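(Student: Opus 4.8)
The plan is to separate the claim into generation, full-faithfulness of each functor (\ref{ast:ff}), and semiorthogonality of the essential images, the first of which is already available: after rewriting $\mathbb{W}_{c-1}(d-k)\otimes\chi_0=\mathbb{W}_{c,0}(d-k)$ as in the proof of Lemma~\ref{lem:gen}, that lemma shows that the subcategories $\mathbb{W}(k)\ast(\mathbb{W}_{c-1}(d-k)\otimes\chi_0)$ for $0\le k\le d$ split generate $\mathbb{W}_c(d)$. Throughout I would fix the one parameter subgroup $\lambda=\lambda_k$ of (\ref{set:lambda2}) with $d_1=k$, so that $\gG_{a,b}(d)^{\lambda=0}=\mM_Q(k)\times\gG_{a,b}(d-k)$ and (\ref{ast:ff}) is the categorified Hall product $p_{\lambda\ast}q_\lambda^\ast$. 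Since $\mathbb{W}(k)$ is split generated by $\oO_{\mM_Q(k)}$ (Lemma~\ref{lem:genO}) and $\mathbb{W}_{c-1}(d-k)\otimes\chi_0$ by the objects $V(\chi)\otimes\oO$ with $\chi\in\mathbb{B}_{c,0}(d-k)$, every Hom computation reduces to the generators $\oO_{\mM_Q(k)}\boxtimes(V(\chi)\otimes\oO)$, and these all lie in the single $\lambda$-weight $0$ on $\gG_{a,b}(d)^{\lambda=0}$.

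For full-faithfulness of (\ref{ast:ff}) I would argue in two stages. First, Lemma~\ref{lem:vanish}(ii) gives that $q_\lambda^\ast$ is fully-faithful on the $\lambda$-weight $0$ part, which contains the source, so it suffices to show that $p_{\lambda\ast}$ induces isomorphisms on the relevant Hom groups. Using that $p_\lambda$ is proper, I would pass to the adjunction
\begin{align*}
\Hom_{\gG_{a,b}(d)}\bigl(p_{\lambda\ast}q_\lambda^\ast E_1,\, p_{\lambda\ast}q_\lambda^\ast E_2\bigr)\cong \Hom_{\gG_{a,b}(d)^{\lambda\ge 0}}\bigl(q_\lambda^\ast E_1,\, p_\lambda^{!}p_{\lambda\ast}q_\lambda^\ast E_2\bigr),
\end{align*}
and analyze $p_\lambda^{!}p_{\lambda\ast}$ through the filtration coming from the attracting geometry, whose associated graded pieces are obtained from $q_\lambda^\ast E_2$ by tensoring with exterior powers of the conormal bundle of the attracting locus, and hence occupy a bounded negative range of $\lambda$-weights besides the weight $0$ identity term. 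The point is then that the defining condition $0\le x_i\le c-1$ of $\mathbb{B}_c(d)$ places $E_1,E_2$ in a weight window narrow enough that, by Lemma~\ref{lem:vanish}(i), all pieces except the weight $0$ term are annihilated by $\Hom(q_\lambda^\ast E_1,-)$. This collapses the right-hand side to $\Hom_{\gG_{a,b}(d)^{\lambda=0}}(E_1,E_2)$, which gives full-faithfulness.

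For semiorthogonality I would prove $\Hom(S_k,S_{k'})=0$ whenever $k<k'$, where $S_k$ denotes the image of (\ref{ast:ff}); this is exactly the vanishing required for the stated order, in which the summand with larger $k$ is placed further to the left. Writing an object of $S_{k'}$ as $p_{\lambda_{k'} \ast}q_{\lambda_{k'}}^\ast Y$ for $\lambda_{k'}$ the subgroup (\ref{set:lambda2}) with $d_1=k'$, and using the adjunction $p_{\lambda_{k'}}^{\ast}\dashv p_{\lambda_{k'} \ast}$, I would reduce the Hom to a group on $\gG_{a,b}(d)^{\lambda_{k'}\ge 0}$ and apply Lemma~\ref{lem:vanish}(i). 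The weight separation needed there is supplied by the stratification $\mathbb{B}_{c,k'}(d)=\{x_1=\cdots=x_{k'}=0,\ x_{k'+1}>0\}$: expanding the generators of $S_k$ and $S_{k'}$ via Proposition~\ref{prop:resol} and Proposition~\ref{prop:Hall}, the leading weights of $S_{k'}$ vanish in the first $k'$ coordinates whereas those of $S_k$ need only vanish in the first $k<k'$ coordinates, and the inequalities (\ref{ineq:chi1}) and (\ref{ineq:chi2}) already established in the proof of Proposition~\ref{prop:Hall} are what make this separation precise.

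Finally I would combine the three ingredients: generation from Lemma~\ref{lem:gen}, full-faithfulness of each (\ref{ast:ff}), and the semiorthogonality above together constitute the asserted decomposition, with $\mathbb{W}(d)\ast(\mathbb{W}_{c-1}(0)\otimes\chi_0)$ leftmost and $\mathbb{W}_{c-1}(d)\otimes\chi_0$ rightmost. I expect the main obstacle to be the full-faithfulness step, specifically the analysis of $p_\lambda^{!}p_{\lambda\ast}$ together with the verification that the window range $\mathbb{B}_c(d)$ is exactly calibrated against the invariants (\ref{etai}) so as to isolate the diagonal contribution; this is the same mechanism responsible for $V(\chi)\otimes\oO$ appearing with multiplicity one in Proposition~\ref{prop:Hall}.
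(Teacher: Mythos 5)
Your architecture is right, and two of the three pieces match the paper exactly: generation is indeed Lemma~\ref{lem:gen}, and your semiorthogonality argument for $k<k'$ (move $p_{\lambda'}^{\ast}$ onto the first slot by the adjunction $p_{\lambda'}^{\ast}\dashv p_{\lambda'\ast}$, expand $p_{\lambda\ast}q_{\lambda}^{\ast}(\oO\boxtimes(V(\chi)\otimes\oO))$ by Proposition~\ref{prop:Hall}, and kill everything with Lemma~\ref{lem:vanish}(i) because all $T$-weights of the resulting Schur bundles pair strictly positively with $\lambda'$ while the target has weight $0$) is the paper's argument. The gap is in full-faithfulness. You propose to compute $\Hom(p_{\lambda\ast}q_{\lambda}^{\ast}E_1,\,p_{\lambda\ast}q_{\lambda}^{\ast}E_2)$ through the right adjunction and a description of $p_{\lambda}^{!}p_{\lambda\ast}$ as the identity plus twists by exterior powers of the conormal bundle of the attracting locus. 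That description is not correct here: $p_{\lambda}\colon[Y^{\lambda\ge 0}/G^{\lambda\ge 0}]\to[Y/G]$ is not a closed immersion --- the underlying map of schemes is a linear embedding, but the group changes from the parabolic $G^{\lambda\ge 0}$ to $G$ --- so $p_{\lambda}^{!}p_{\lambda\ast}$ also carries contributions from the fibers $G/G^{\lambda\ge 0}$ and does not admit the Koszul-type filtration you invoke. Moreover the ``narrow window'' of $\mathbb{B}_c(d)$ is not the operative mechanism: your source objects sit in $\lambda$-weight exactly $0$, and what must be shown is that every correction term has strictly nonzero $\lambda$-weight and that the weight-$0$ part is the identity with multiplicity one; asserting this is essentially restating the problem.

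The paper sidesteps the analysis of $p_{\lambda}^{!}p_{\lambda\ast}$ by running the adjunction the same way as in the semiorthogonality step, even when $k=k'$. One writes
\begin{align*}
\Hom(p_{\lambda\ast}q_{\lambda}^{\ast}(\oO\boxtimes(V(\chi)\otimes\oO)),\,p_{\lambda\ast}q_{\lambda}^{\ast}(\oO\boxtimes(V(\chi')\otimes\oO)))\cong
\Hom(p_{\lambda}^{\ast}p_{\lambda\ast}q_{\lambda}^{\ast}(\oO\boxtimes(V(\chi)\otimes\oO)),\,q_{\lambda}^{\ast}(\oO\boxtimes(V(\chi')\otimes\oO))),
\end{align*}
and the inner object $p_{\lambda\ast}q_{\lambda}^{\ast}(\cdots)$ is already resolved by Proposition~\ref{prop:Hall} into $V(\chi)\otimes\oO$ (appearing once) together with $V(\chi'')\otimes\oO$ for $\chi''\in\mathbb{B}_{c,<k}(d)$, all of whose $T$-weights pair strictly positively with $\lambda$; the latter die against the weight-$0$ target by Lemma~\ref{lem:vanish}(i). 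For the surviving term one uses that $\gG_{a,b}(d)^{\lambda\ge 0}$ is the stack of short exact sequences of $Q_{a,b}$-representations, so $p_{\lambda}^{\ast}(V(\chi)\otimes\oO)$ is filtered with a unique $\lambda$-weight-$0$ graded piece $q_{\lambda}^{\ast}(\oO\boxtimes(V(\chi)\otimes\oO))$, and Lemma~\ref{lem:vanish}(ii) then identifies the Hom with $\Hom(\oO\boxtimes(V(\chi)\otimes\oO),\oO\boxtimes(V(\chi')\otimes\oO))$. If you insist on the $p_{\lambda}^{!}$ route you would first have to prove the analogue of Proposition~\ref{prop:Hall} for $p_{\lambda}^{!}p_{\lambda\ast}$, which is not easier than the statement you are trying to prove.
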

	\begin{proof}
		The generation is proved in Lemma~\ref{lem:gen}. 
		It is enough to show that the functor (\ref{ast:ff}) is fully-faithful, 
		and the images of the above functors are semiorthogonal. 
		
		Let us take $k\le k'$ and 
		$\chi \in \mathbb{B}_{c-1, 0}(d-k)$, 
		$\chi' \in \mathbb{B}_{c-1, 0}(d-k')$. 
		Let $\lambda$, $\lambda'$ be the 
		one parameter subgroups $\C \to T \subset \GL(V)$
		given by 
		\begin{align*}
			\lambda(t)=(\overbrace{t, \ldots, t}^{k}, 1, \ldots, 1), \ 
				\lambda'(t)=(\overbrace{t, \ldots, t}^{k'}, 1, \ldots, 1). 
			\end{align*}
		We have the diagrams of attracting loci 
		\begin{align}\label{dia:quiver3}
			\xymatrix{
				\gG_{a, b}(d)^{\lambda \ge 0} \ar[r]^-{p_{\lambda}} \ar[d]_-{q_{\lambda}} & \gG_{a, b}(d) \\
			\mM_Q(k) \times \gG_{a, b}(d-k), & 	
			} \quad 
		\xymatrix{
			\gG_{a, b}(d)^{\lambda \ge 0} \ar[r]^-{p_{\lambda'}} \ar[d]_-{q_{\lambda'}} & \gG_{a, b}(d) \\
			\mM_Q(k') \times \gG_{a, b}(d-k'). & 	
		}		
		\end{align}
		Then we have 
		\begin{align}\notag
			&\Hom(\oO \ast (V(\chi) \otimes \oO), \oO \ast (V(\chi') \otimes \oO)) \\
	\notag	&\cong \Hom(p_{\lambda\ast}q_{\lambda}^{\ast}(\oO \boxtimes 
			(V(\chi) \otimes \oO), p_{\lambda'\ast}q_{\lambda'}^{\ast}(\oO \boxtimes 
			(V(\chi') \otimes \oO)) \\
		\label{isom:V}	&\cong \Hom(p_{\lambda'}^{\ast}p_{\lambda\ast}q_{\lambda}^{\ast}(\oO \boxtimes 
			(V(\chi) \otimes \oO), q_{\lambda'}^{\ast}(\oO \boxtimes 
			(V(\chi') \otimes \oO)).
			\end{align}
		The object $p_{\lambda\ast}q_{\lambda}^{\ast}(\oO \boxtimes 
		(V(\chi) \otimes \oO)$
		is generated by $V(\chi) \otimes \oO$
		and $V(\chi'') \otimes \oO$ for $\chi'' \in \mathbb{B}_{c, <k}(d)$
		by Proposition~\ref{prop:Hall}. 
		If $k<k'$, then 
		any $T$-weight of $V(\chi)$ and $V(\chi'')$ pairs positively with 
		$\lambda'$. 
		Since any $T$-weight of $V(\chi')$ pairs zero with $\lambda'$, 
		it follows that (\ref{isom:V}) is zero by Lemma~\ref{lem:vanish} (i).
		Therefore the images of the functors (\ref{prop:sod}) are semiorthogonal. 
		
		Suppose that $k=k'$. 
		Since any $T$-weight of $V(\chi'')$ pairs positively with $\lambda$, 
		we have 
		\begin{align*}
			(\ref{isom:V})&\cong 
		\Hom(p_{\lambda'}^{\ast}(V(\chi) \otimes \oO), q_{\lambda}^{\ast}(\oO \boxtimes 
		(V(\chi') \otimes \oO))) \\
		&\cong \Hom(q_{\lambda'}^{\ast}(\oO \boxtimes 
		(V(\chi) \otimes \oO)), q_{\lambda}^{\ast}(\oO \boxtimes 
		(V(\chi') \otimes \oO))) \\
		&\cong \Hom(\oO \boxtimes 
		(V(\chi) \otimes \oO), \oO \boxtimes 
		(V(\chi') \otimes \oO)). 		
			\end{align*}
	Here the 
	second isomorphism follows since
	$\gG_{a, b}(d)^{\lambda \ge 0}$ is the stack of exact sequences 
	\begin{align*}
		0 \to \mathbb{V}^{\lambda >0} \to \mathbb{V} \to \mathbb{V}^{\lambda=0} \to 0
		\end{align*}
	of $Q_{a, b}$-representations,  
	so the only Schur power power from $\mathbb{V}^{\lambda=0}$ survives by 
	Lemma~\ref{lem:vanish} (i).
	The 	
	last isomorphism follows from Lemma~\ref{lem:vanish} (ii). 
	Therefore the functor (\ref{ast:ff}) is fully-faithful. 
		\end{proof}
	
	For $d_{\bullet}=(d_1, \ldots, d_l) \in \mathbb{Z}_{\ge 0}^l$
	and $d_{\bullet}'=(d_1', \ldots, d_l') \in \mathbb{Z}_{\ge 0}^l$, 
	we take the following lexicographic order: 
	$d_{\bullet}' \succ d_{\bullet}$ if and only if 
	there is some $m$ so that $d_i=d_i'$ for $i<m$ and $d_m>d_m'$. 
	\begin{thm}\label{thm:SOD}
		We take $c>b$ with $l \cneq c-b>0$. Then for 
		each $d_{\bullet}=(d_1, \ldots, d_l) \in \mathbb{Z}_{\ge 0}^l$, 
		the categorified Hall product (\ref{caH2}) 
		induces the fully-faithful functor 
		\begin{align*}
			\ast \colon 
			\mathbb{W}(d_1) \boxtimes (\mathbb{W}(d_2) \otimes \chi_0)
			\boxtimes \cdots \boxtimes (\mathbb{W}(d_l) \otimes \chi_0^{l-1})
			\boxtimes (\mathbb{W}_b(d-\sum_{i=1}^l d_i) \otimes \chi_0^l)
			\to \mathbb{W}_c(d)
			\end{align*}
		such that, 
		by setting 
		$\cC(d_{\bullet}) \subset D^b(\gG_{a, b}(d))$ to be the 
		essential image of the above functor, 
		we have the semiorthogonal decomposition 
		\begin{align*}
			\mathbb{W}_c(d)=
			\langle \cC(d_{\bullet}) : 
			d_{\bullet} \in \mathbb{Z}_{\ge 0}^l \rangle. 
			\end{align*}
		Here $\Hom(\cC(d'_{\bullet}), \cC(d_{\bullet}))=0$
		for $d_{\bullet}' \succ d_{\bullet}$. 
		\end{thm}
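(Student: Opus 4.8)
The plan is to establish both the full-faithfulness and the semiorthogonality by induction on $l=c-b\ge 1$, with Proposition~\ref{prop:sod} serving as the one-step engine and Proposition~\ref{prop:gen} supplying generation. Generation requires no further work: Proposition~\ref{prop:gen} already identifies $\mathbb{W}_c(d)$ with the subcategory generated by the $\mathbb{W}(d_\bullet)$ of (\ref{Wdi}), and these coincide with the essential images $\cC(d_\bullet)$ once the defining functors are shown to be fully-faithful. For the base case $l=1$ one has $c=b+1$ and $\mathbb{W}_{c-1}=\mathbb{W}_b$, so the assertion is verbatim Proposition~\ref{prop:sod}: the product $\ast\colon \mathbb{W}(d_1)\boxtimes(\mathbb{W}_b(d-d_1)\otimes\chi_0)\to\mathbb{W}_c(d)$ is fully-faithful and the images form a semiorthogonal decomposition ordered by $d_1$ descending, which is exactly the order $\succ$ on length-one tuples.

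For the inductive step I would assume the theorem for $c-1$, producing subcategories $\cC'(d_2,\ldots,d_l)\subset\mathbb{W}_{c-1}(d-d_1)$ for each $d_1$. The first move is to refactor the $l$-fold Hall product: by its associativity together with the twisting identity (\ref{ast:chi0}), the functor defining $\cC(d_\bullet)$ equals $\mathbb{W}(d_1)\ast\big((-)\otimes\chi_0\big)$ applied to the $(l-1)$-fold product defining $\cC'(d_2,\ldots,d_l)$; in other words $\cC(d_\bullet)=\mathbb{W}(d_1)\ast(\cC'(d_2,\ldots,d_l)\otimes\chi_0)$ inside the outer component $\mathbb{W}(d_1)\ast(\mathbb{W}_{c-1}(d-d_1)\otimes\chi_0)$. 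Full-faithfulness of the $l$-fold functor then follows by composing two fully-faithful functors: the inner one (the inductive hypothesis) onto $\cC'(d_2,\ldots,d_l)$, and the outer one $\ast\colon \mathbb{W}(d_1)\boxtimes(\mathbb{W}_{c-1}(d-d_1)\otimes\chi_0)\to\mathbb{W}_c(d)$ of Proposition~\ref{prop:sod}.

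For semiorthogonality I would verify $\Hom(\cC(d_\bullet'),\cC(d_\bullet))=0$ whenever $d_\bullet'\succ d_\bullet$, splitting on the first coordinate. If $d_1\ne d_1'$ then the first differing index is the first one, so $d_1>d_1'$, and the vanishing is inherited directly from the outer semiorthogonal decomposition of Proposition~\ref{prop:sod}, since $\cC(d_\bullet')$ and $\cC(d_\bullet)$ sit in the $d_1'$-th and $d_1$-th outer components. If $d_1=d_1'$, both lie in a single outer component; using full-faithfulness of the outer product on all of $\mathbb{W}(d_1)\boxtimes(\mathbb{W}_{c-1}(d-d_1)\otimes\chi_0)$, I would identify $\Hom(\cC(d_\bullet'),\cC(d_\bullet))$ with the box-product Hom between $\mathbb{W}(d_1)\boxtimes(\cC'(d_2',\ldots,d_l')\otimes\chi_0)$ and $\mathbb{W}(d_1)\boxtimes(\cC'(d_2,\ldots,d_l)\otimes\chi_0)$. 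By the Künneth factorization of external Homs this contains the factor $\Hom(\cC'(d_2',\ldots,d_l'),\cC'(d_2,\ldots,d_l))$, which vanishes by the inductive hypothesis because $d_1=d_1'$ makes $d_\bullet'\succ d_\bullet$ equivalent to $(d_2',\ldots,d_l')\succ(d_2,\ldots,d_l)$ for the truncated tuples.

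The principal difficulty is organizational rather than conceptual: one must confirm that the two-layer ordering --- outer by $d_1$ descending from Proposition~\ref{prop:sod}, inner by $(d_2,\ldots,d_l)$ from the inductive hypothesis --- reassembles precisely into the order $\succ$ on full tuples, and that passing $\Hom$ through the outer Hall product in the case $d_1=d_1'$ is legitimate, i.e. that full-faithfulness of $\ast$ on the entire box product really does reduce the computation to the external Künneth vanishing. The delicate bookkeeping lies in matching the orientation conventions of the two semiorthogonal decompositions so that "first differing coordinate has $d_m>d_m'$" consistently records vanishing of $\Hom$ from $\cC(d_\bullet')$ to $\cC(d_\bullet)$; once this is pinned down, the induction closes.
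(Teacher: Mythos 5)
Your proposal is correct and follows essentially the same route as the paper: the paper's proof is precisely an induction on $c$ with Proposition~\ref{prop:sod} as the base case and one-step engine, organized as in the proof of Proposition~\ref{prop:gen}. The details you supply — refactoring the $l$-fold product via associativity and (\ref{ast:chi0}), splitting the semiorthogonality check on whether $d_1=d_1'$, and reducing the equal case to the inner factor via full-faithfulness of the outer product and the K\"unneth formula — are exactly the details the paper leaves implicit, and your bookkeeping of the order $\succ$ matches the paper's conventions.
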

	\begin{proof}
		The result is proved for $c=b+1$ in Proposition~\ref{prop:sod}. 
		Then similarly to the proof of Proposition~\ref{prop:gen}, the theorem 
		easily follows by the induction of $c$. 
		\end{proof}
	
	By applying Theorem~\ref{thm:SOD} to $c=a$ and $r=a-b$, 
	we obtain the following: 
	\begin{cor}\label{cor:sod}
		There is a semiorthogonal 
		decomposition of the form 
		\begin{align*}
			D^b(G_{a, b}^+(d))
			=\left\langle D^b(M_Q(d_1)) \boxtimes \cdots 
			\boxtimes D^b(M_Q(d_r)) \boxtimes D^b(G_{a, b}^-(d-\sum_{i=1}^r d_i)) : (d_1, \ldots, d_r) \in \mathbb{Z}_{\ge 0}^r \right\rangle. 
			\end{align*}
		\end{cor}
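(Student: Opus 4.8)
The plan is to obtain Corollary~\ref{cor:sod} as a direct specialization of Theorem~\ref{thm:SOD}, translating each window subcategory into the derived category of the relevant moduli space by means of the equivalences already established in the excerpt. Concretely, I would set $c=a$, so that $l=c-b=a-b=r$, and invoke Theorem~\ref{thm:SOD} to produce the semiorthogonal decomposition $\mathbb{W}_a(d)=\langle \cC(d_\bullet) : d_\bullet \in \mathbb{Z}_{\ge 0}^r \rangle$, where each $\cC(d_\bullet)$ is the essential image of the fully-faithful categorified Hall product. The ingredients for the translation are Proposition~\ref{prop:WGD}, giving equivalences $\mathbb{W}_a(d) \simto D^b(G_{a,b}^+(d))$ and $\mathbb{W}_b(d') \simto D^b(G_{a,b}^-(d'))$ via the quotient functor, together with Lemma~\ref{lem:pull}, giving $\pi_Q^{\ast}\colon D^b(M_Q(d_i)) \simto \mathbb{W}(d_i)$.

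First I would transport the decomposition. Since the composition $\mathbb{W}_a(d) \hookrightarrow D^b(\gG_{a,b}(d)) \twoheadrightarrow D^b(G_{a,b}^+(d))$ is an equivalence by Proposition~\ref{prop:WGD}, the semiorthogonal decomposition of $\mathbb{W}_a(d)$ from Theorem~\ref{thm:SOD} is carried to a semiorthogonal decomposition of $D^b(G_{a,b}^+(d))$, still indexed by $d_\bullet \in \mathbb{Z}_{\ge 0}^r$ and with the same lexicographic Hom-vanishing $\Hom(\cC(d_\bullet'),\cC(d_\bullet))=0$ for $d_\bullet' \succ d_\bullet$.

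Next I would identify each summand. Because the Hall product functor of Theorem~\ref{thm:SOD} is fully-faithful, $\cC(d_\bullet)$ is equivalent to its source
\[
\mathbb{W}(d_1) \boxtimes (\mathbb{W}(d_2) \otimes \chi_0) \boxtimes \cdots \boxtimes (\mathbb{W}(d_r) \otimes \chi_0^{r-1}) \boxtimes (\mathbb{W}_b(d - \sum_{i} d_i) \otimes \chi_0^{r}).
\]
Tensoring by the line bundle $\chi_0^{j}$ is an autoequivalence of each factor, so this source is equivalent to $\mathbb{W}(d_1) \boxtimes \cdots \boxtimes \mathbb{W}(d_r) \boxtimes \mathbb{W}_b(d-\sum_i d_i)$. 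Applying Lemma~\ref{lem:pull} to each $\mathbb{W}(d_i)$ and Proposition~\ref{prop:WGD} to $\mathbb{W}_b(d-\sum_i d_i)$, and taking the external product of these equivalences, identifies this box product with $D^b(M_Q(d_1)) \boxtimes \cdots \boxtimes D^b(M_Q(d_r)) \boxtimes D^b(G_{a,b}^-(d-\sum_i d_i))$, which is precisely the claimed summand.

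The main point requiring care — rather than a genuine obstacle — is this final box-product identification: one must check that the external product of the individual window equivalences is again an equivalence onto the box product subcategory defined by split-generation. This is formal, since each factor equivalence is induced by a standard functor (pullback along the good moduli space morphism $\pi_Q$, respectively restriction to the semistable locus $G_{a,b}^-(d')$), and external products of equivalences of this type remain equivalences; the twists by $\chi_0^{j}$ only reshuffle objects by an autoequivalence and do not affect the outcome. Assembling these identifications over all $d_\bullet \in \mathbb{Z}_{\ge 0}^r$ then yields the asserted semiorthogonal decomposition of $D^b(G_{a,b}^+(d))$.
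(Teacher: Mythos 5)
Your proposal is correct and follows exactly the paper's own route: the paper's proof of this corollary is precisely the one-line citation of Theorem~\ref{thm:SOD} (with $c=a$, $l=r$), Proposition~\ref{prop:WGD}, and Lemma~\ref{lem:pull}, and your expansion of the translation steps (transporting the decomposition through the window equivalence, absorbing the $\chi_0^j$ twists as autoequivalences, and identifying the box-product summands factorwise) fills in the details the paper leaves implicit without deviating from its argument.
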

	\begin{proof}
		The corollary follows from Theorem~\ref{thm:SOD}, 
		Proposition~\ref{prop:WGD}
		and Lemma~\ref{lem:pull}. 
		\end{proof}
	
	\subsection{The case of multiple quivers}
	For $m \ge 1$, let $Q^{(m)}$ be the quiver with 
	$m$-vertices $\{1, \ldots, m\}$
	with one loop at each vertex. 
	We also define $Q_{a, b}^{(m)}$ to be the quiver 
	with vertices $\{\infty, 1, \ldots, m\}$
	with one loop at each vertex $\{1, \ldots, m\}$, 
	$a$-arrows from $\infty$ to each $i \in \{1, \ldots, m\}$, 
	$b$-arrows from each $i \in \{1, \ldots, m\}$ to $\infty$. 
	See the following picture for $Q_{2, 1}^{(3)}$: 
	\[
	Q_{2, 1}^{(3)}=
	\begin{tikzcd}
	&	& & \bullet_1 	\arrow[out=50, in=310, loop]	\arrow[llldd, bend left=5] \\ 
	& & & \\		
		\bullet_{\infty}
		\arrow[rrruu,bend left=20]
		\arrow[rrruu,bend left=10]
			\arrow[rrr,bend left=20]
		\arrow[rrr,bend left=10]
				\arrow[rrrdd,bend right=20]
					\arrow[rrrdd,bend right=10]
		& & &
		\bullet_2 \arrow[lll, bend left=5]
		\arrow[out=50, in=310, loop] \\
		& & & \\
	&	& &
		\bullet_3 \arrow[llluu, bend right=5]
			\arrow[out=50, in=310, loop]
		\end{tikzcd}
	\]
	
	Let $d^{(\ast)}=(d^{(1)}, \ldots, d^{(m)}) \in \mathbb{Z}_{\ge 0}^{m}$
	be a dimension vector of a $Q^{(m)}$-representation. 
	We set
	\begin{align*}
		\lvert d^{(\ast)} \rvert \cneq 
		d^{(1)}+\cdots+ d^{(m)}. 
		\end{align*}
	The moduli stack of $Q^{(m)}$-representations with 
	dimension vector $d^{(\ast)}$ is given by 
	\begin{align*}
		\mM_{Q^{(m)}}(d^{(\ast)})\cneq \prod_{j=1}^m 
		\mM_Q(d^{(j)})=\prod_{j=1}^{m}[\End(V^{(j)})/\GL(V^{(j)})] 
		\end{align*}
	where $\dim V^{(j)}=d^{(j)}$. 
	The $\C$-rigidified moduli stack of 
	$Q_{a, b}^{(m)}$-representations with dimension 
	vector $(1, d^{(\ast)})$ is given by 
	\begin{align*}
		\gG_{a, b}^{(m)}(d^{(\ast)}) \cneq \prod_{j=1}^m 
		\gG_{a, b}(d^{(j)}). 
		\end{align*}
	Let $\chi_0$ be the determinant character for 
	$\prod_{j=1}^m \GL(V^{(j)})$, i.e. 
	\begin{align*}
		\chi_0 \colon \prod_{j=1}^m \GL(V^{(j)}) \to \C, \ 
		\{g^{(j)}\}_{1\le j\le m} \mapsto \prod_{j=1}^m \det(g^{(j)}). 
		\end{align*}
	Then the $\chi_0^{\pm}$-stable loci are
	\begin{align*}
		G_{a, b}^{(m)\pm}(d^{(\ast)}) \cneq
		\prod_{j=1}^m G_{a, b}^{\pm}(d^{(j)}) \subset \gG_{a, b}^{(m)}(d^{(\ast)}).  
		\end{align*}
	We take the Weyl-invariant norm $\lvert \ast \rvert$
	for the cocharacter lattice of 
	$\prod_{j=1}^m\GL(V^{(j)})$ to be
	\begin{align*}
		\lvert (\lambda^{(1)}, \ldots, \lambda^{(m)}) \rvert^2
		=\lvert \lambda^{(1)} \rvert^2+ \cdots + \lvert \lambda^{(m)} \rvert^2. 
		\end{align*}
	We have the KN stratification of $\gG_{a, b}^{(m)}(d^{(\ast)})$
	with respect to $(\chi_0^{\pm 1}, \lvert \ast \rvert)$
	\begin{align}\label{KN:Gm}
		\gG_{a, b}^{(m)}(d^{(\ast)})=
		\mathscr{S}^{\pm}_0 \sqcup \cdots \sqcup \mathscr{S}^{\pm}_{\lvert d^{(\ast)} \rvert -1}
		\sqcup G_{a, b}^{(m)\pm}(d^{(\ast)}). 
		\end{align}
	A strata $\mathscr{S}_i^{\pm}$ corresponds to 
	$Q_{a, b}^{(m)}$-representations 
	such that the images of 
	arrows from the vertex $\infty$
	(resp.~duals of arrows going to $\infty$)
	generates $i$-dimensional $Q^{(m)}$-representations (see~\cite[Lemma~5.1.9]{TocatDT}). 
	Explicitly, by setting 
	\begin{align*}
		\gG_{a, b}(d^{(j)})=\sS_0^{(j)\pm} \sqcup \cdots \sqcup \sS^{(j)\pm}_{d^{(j)}-1} \sqcup 
		G_{a, b}^{\pm}(d^{(j)})
		\end{align*}
	to be KN stratifications (\ref{KN:Gst}) for 
	$d^{(j)}$, 
	we have 
	\begin{align*}
		\mathscr{S}_i^{\pm}=\coprod_{i_1+\cdots+i_m=i} \prod_{j=1}^m \sS_{i_j}^{(j)\pm}. 
		\end{align*}
	The corresponding 
	one parameter subgroup is given by 
	$\{\lambda_{i_j}^{\pm}\}_{1\le j\le m}$, 
	where $\lambda_{i_j}^{\pm}$ is given in (\ref{lambdai}), 
	with slope (\ref{slope}) given by 
	$\mu_i^{\pm}=\sqrt{\lvert d^{(\ast)}\rvert-i}$. 
	
		We set
	\begin{align}\label{box:product}
		&\mathbb{W}(d^{(\ast)}) \cneq 
		\boxtimes_{j=1}^m \mathbb{W}(d^{(j)}) \subset 
		D^b(\mM_{Q^{(m)}}(d^{(\ast)})), \\ 
	\notag	&\mathbb{W}_c(d^{(\ast)}) \cneq 
		\boxtimes_{j=1}^m \mathbb{W}_c(d^{(j)}) \subset 
		D^b(\gG_{a, b}^{(m)}(d^{(\ast)})). 
		\end{align}
	By Proposition~\ref{prop:WGD}, the following composition functors 
	are equivalences:
	\begin{align*}
		&\mathbb{W}_a(d^{(\ast)})
		\subset D^b(\gG_{a, b}^{(m)}(d^{(\ast)})) \twoheadrightarrow 
		D^b(G_{a, b}^{(m)+}(d^{(\ast)})), \\
		&\mathbb{W}_b(d^{(\ast)})
		\subset D^b(\gG_{a, b}^{(m)}(d^{(\ast)})) \twoheadrightarrow 
		D^b(G_{a, b}^{(m)-}(d^{(\ast)})). 
		\end{align*}
	
	For a decomposition 
	$d^{(\ast)}=d_1^{(\ast)}+d_2^{(\ast)}$, 
	let $\lambda \colon \C \to \prod_{j=1}^m \GL(V^{(j)})$ be the 
	one parameter subgroup given by 
	\begin{align*}
		\lambda=(\lambda^{(1)}, \ldots, \lambda^{(m)}), \ 
		\lambda^{(j)}(t)=(\overbrace{t, \ldots, t}^{d_1^{(j)}}, \overbrace{1, \ldots, 1}^{d_2^{(j)}}).
		\end{align*}
	We have the diagram of attracting loci 
	\begin{align}\label{dia:Glm}
		\xymatrix{\prod_{j=1}^m \gG_{a, b}(d^{(j)})^{\lambda^{(j)} \ge 0} 
			\ar@{=}[r] &
\gG_{a, b}^{(m)}(d^{(\ast)})^{\lambda \ge 0}
\ar[r]^-{q_{\lambda}} \ar[d]_-{p_{\lambda}} & \gG_{a, b}^{(m)}(d^{(\ast)}) \\
\mM_{Q^{(m)}}(d_1^{(\ast)})
\times 	\gG_{a, b}^{(m)}(d_2^{(\ast)}) \ar@{=}[r] & 
\gG_{a, b}^{(m)}(d^{(\ast)})^{\lambda = 0}.  &	
}
		\end{align}
	which gives the categorified 
	Hall product 
	\begin{align}\label{caHall:mult}
		\ast=q_{\lambda\ast}p_{\lambda}^{\ast} \colon 
		D^b(\mM_{Q^{(m)}}(d_1^{(\ast)})) \boxtimes 
		D^b(\gG_{a, b}^{(m)}(d_2^{(\ast)})) \to 
			D^b(\gG_{a, b}^{(m)}(d^{(\ast)})). 
		\end{align}
	From Theorem~\ref{thm:SOD}, we have the following: 
	\begin{thm}\label{thm:SOD:mult}
		We take $c>b$ with $l \cneq c-b>0$. Then for 
	each $d_{\bullet}=(d_1, \ldots, d_l) \in \mathbb{Z}_{\ge 0}^l$, 
	the categorified Hall product induces 
	the fully-faithful functor 
	\begin{align*}
		\ast \colon 
		\bigoplus_{\begin{subarray}{c}
				(d_1^{(\ast)}, \ldots, d_r^{(\ast)}) \\
				\lvert d_i^{(\ast)} \rvert=d_i
			\end{subarray}}
		\mathbb{W}(d_1^{(\ast)}) \boxtimes (\mathbb{W}(d_2^{(\ast)}) \otimes \chi_0)
		\boxtimes \cdots \boxtimes (\mathbb{W}(d_l^{(\ast)}) \otimes \chi_0^{l-1})
		&\boxtimes (\mathbb{W}_b(d^{(\ast)}-\sum_{i=1}^l d^{(\ast)}_i) \otimes \chi_0^l)\\
		&\to \mathbb{W}_c(d^{(\ast)})
	\end{align*}
	such that, 
	by setting 
	$\cC(d_{\bullet}) \subset D^b(\gG_{a, b}^{(m)}(d^{(\ast)}))$ to be the 
	essential image of the above functor, 
	we have the semiorthogonal decomposition 
	\begin{align*}
		\mathbb{W}_c(d^{(\ast)})=
		\langle \cC(d_{\bullet}) : 
		d_{\bullet} \in \mathbb{Z}_{\ge 0}^l \rangle. 
	\end{align*}
	Here $\Hom(\cC(d'_{\bullet}), \cC(d_{\bullet}))=0$
	for $d_{\bullet}' \succ d_{\bullet}$. 
\end{thm}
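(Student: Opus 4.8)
The plan is to deduce the statement formally from the single-quiver case Theorem~\ref{thm:SOD}, exploiting the product structure $\gG_{a, b}^{(m)}(d^{(\ast)})=\prod_{j=1}^m \gG_{a, b}(d^{(j)})$. The crucial observation is that the attracting-loci diagram (\ref{dia:Glm}) is the product over $j$ of the single-quiver diagrams (\ref{dia:quiver2}): both $\gG_{a, b}^{(m)}(d^{(\ast)})^{\lambda\ge 0}$ and $\gG_{a, b}^{(m)}(d^{(\ast)})^{\lambda=0}$ split as products, and the morphisms $p_{\lambda}$, $q_{\lambda}$ are external products of the $p_{\lambda^{(j)}}$, $q_{\lambda^{(j)}}$. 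Hence the categorified Hall product (\ref{caHall:mult}) is the external product over $j=1, \ldots, m$ of the single-quiver Hall products, and by (\ref{box:product}) the subcategories $\mathbb{W}(d^{(\ast)})$ and $\mathbb{W}_c(d^{(\ast)})$ are the corresponding external products of single-quiver window subcategories. First I would record a K\"unneth formula for $\Hom$-spaces in the box product $D^b(\gG_{a, b}^{(m)}(d^{(\ast)}))=\boxtimes_{j=1}^m D^b(\gG_{a, b}(d^{(j)}))$, which is available because the ambient stack is a genuine product over $\mathbb{C}$ and the relevant objects are the explicit generators $V(\chi)\otimes \oO$.

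Next, for each $j$ I would apply Theorem~\ref{thm:SOD} to $\mathbb{W}_c(d^{(j)})$, obtaining the semiorthogonal decomposition indexed by $d_{\bullet}^{(j)}=(d_1^{(j)}, \ldots, d_l^{(j)})\in \mathbb{Z}_{\ge 0}^l$, whose $d_{\bullet}^{(j)}$-summand is the essential image $\cC(d_{\bullet}^{(j)})$ of a fully-faithful Hall-product functor. Taking the external product over $j$ and applying the K\"unneth formula then yields a \emph{fine} semiorthogonal decomposition of $\mathbb{W}_c(d^{(\ast)})$ with pieces $\boxtimes_{j=1}^m \cC(d_{\bullet}^{(j)})$, indexed by tuples $(d_{\bullet}^{(1)}, \ldots, d_{\bullet}^{(m)})$ (equivalently, by sequences of dimension vectors $(d_1^{(\ast)}, \ldots, d_l^{(\ast)})$). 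Each such piece is the essential image of the external product of the corresponding single-quiver functors, hence fully-faithful by K\"unneth.

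Then I would regroup the fine decomposition according to the totals $d_i\cneq \sum_{j=1}^m d_i^{(j)}$, so that $\cC(d_{\bullet})$ is generated by all $\boxtimes_{j=1}^m \cC(d_{\bullet}^{(j)})$ with $\sum_j d_i^{(j)}=d_i$; this is precisely the essential image of the direct-sum functor in the statement. Two purely combinatorial facts are needed, each proved by summing inequalities coordinate-by-coordinate over $j$. Writing $T, T'$ for the totals of refinements $(d_{\bullet}^{(j)})_j$ and $(d_{\bullet}'^{(j)})_j$: (a) if $T'\succ T$ then some $j$ satisfies $d_{\bullet}'^{(j)}\succ d_{\bullet}^{(j)}$, which with K\"unneth and the single-quiver vanishing gives $\Hom(\cC(d'_{\bullet}), \cC(d_{\bullet}))=0$; and (b) if two distinct refinements share the same totals, then some $j$ has $d_{\bullet}^{(j)}\succ d_{\bullet}'^{(j)}$ \emph{and} some $j'$ has $d_{\bullet}'^{(j')}\succ d_{\bullet}^{(j')}$, so the two fine pieces are mutually orthogonal and the direct-sum functor defining $\cC(d_{\bullet})$ is fully-faithful. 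Both (a) and (b) follow from the elementary observation that if $a_j\succeq b_j$ for all $j$ and $\sum_j a_j=\sum_j b_j$ coordinatewise, then $a_j=b_j$ for all $j$. Generation of $\mathbb{W}_c(d^{(\ast)})$ by the $\cC(d_{\bullet})$ is inherited from the fine decomposition.

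The identification of the Hall product as an external product and the verifications of (a), (b) are routine. The one point requiring genuine care — the main obstacle — is the K\"unneth formula for $\Hom$ in $\boxtimes_{j}D^b(\gG_{a, b}(d^{(j)}))$ in this non-proper quotient-stack setting, and the attendant fact that an external product of semiorthogonal decompositions of $D^b$ of smooth quotient stacks is again a semiorthogonal decomposition. Once that is in place, the reindexing lemmas (a) and (b) upgrade the fine decomposition into the coarse one indexed by the totals $d_{\bullet}$, and the theorem follows.
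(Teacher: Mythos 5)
Your proposal is correct and follows essentially the same route as the paper: the paper's proof likewise identifies the multi-quiver categorified Hall product as the external product of the single-quiver ones via a commutative diagram, takes the box product of the semiorthogonal decompositions from Theorem~\ref{thm:SOD} to get the fine decomposition indexed by $d_{\bullet}^{(\ast)}\in\mathbb{Z}_{\ge 0}^{lm}$, and then regroups by the totals $\lvert d_{\bullet}^{(\ast)}\rvert$ using exactly your two combinatorial observations (mutual orthogonality of distinct refinements with equal totals, and the existence of a factor realizing $\succ$ when the totals satisfy $\succ$). The only difference is one of emphasis: you flag the K\"unneth/external-product-of-SODs step as the point needing care, while the paper treats it as implicit in its definition of $\boxtimes$; this does not change the argument.
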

\begin{proof}
	The categorified Hall product (\ref{caHall:mult})
	fits into the commutative diagram 
	\begin{align*}
		\xymatrix{
		D^b(\mM_{Q^{(m)}}(d_1^{(\ast)})) \boxtimes 
	D^b(\gG_{a, b}^{(m)}(d_2^{(\ast)})) \ar[r]^-{\ast} \ar[d]_-{\sim} 
	& 	D^b(\gG_{a, b}^{(m)}(d^{(\ast)})) \ar[d]^-{\sim} \\
	\boxtimes_{j=1}^m
	\left(D^b(\mM_{Q}(d_1^{(j)})) \boxtimes 
	D^b(\gG_{a, b}(d_2^{(j)}))	\right) \ar[r]^-{\boxtimes \ast} & 
	\boxtimes_{j=1}^m 	D^b(\gG_{a, b}(d^{(j)})).
	}
		\end{align*}
	Here the left vertical arrow is the exchange of factors. 
	Therefore from Theorem~\ref{thm:SOD}, 
	for each $(d_1^{(\ast)}, \ldots, d_l^{(\ast)})$ 
	the functor 
		\begin{align*}
		\ast \colon 
		\mathbb{W}(d_1^{(\ast)}) \boxtimes (\mathbb{W}(d_2^{(\ast)}) \otimes \chi_0)
		\boxtimes \cdots \boxtimes (\mathbb{W}(d_l^{(\ast)}) \otimes \chi_0^{l-1})
		\boxtimes (\mathbb{W}_b(d^{(\ast)}-\sum_{i=1}^l d^{(\ast)}_i) \otimes \chi_0^l)
		\to D^b(\gG_{a, b}^{(m)}(d^{(\ast)}))
	\end{align*}
is fully-faithful such that, by setting 
$\cC(d_{\bullet}^{(\ast)})$ to be essential image of the above functor, we have 
the semiorthogonal decomposition of the form 
\begin{align*}
	\mathbb{W}_c(d^{(\ast)})=\langle \cC(d_{\bullet}^{(\ast)}) : 
	d_{\bullet}^{(\ast)} \in \mathbb{Z}_{\ge 0}^{lm} \rangle. 
	\end{align*}
For $d_{\bullet}^{(\ast)}\in \mathbb{Z}_{\ge 0}^{lm}$, we set 
$\lvert d_{\bullet}^{(\ast)}\rvert \cneq (\lvert d_1^{(\ast)} \rvert, \ldots, 
\lvert d_l^{(\ast)} \rvert) \in \mathbb{Z}_{\ge 0}^l$. 
Then for $d_{\bullet}^{(\ast)} \neq d_{\bullet}^{(\ast)'}$ with 
$\lvert d_{\bullet}^{(\ast)} \rvert = \lvert d_{\bullet}^{(\ast)'}\rvert$, 
the subcategories 
$ \cC(d_{\bullet}^{(\ast)})$
and $\cC(d_{\bullet}^{(\ast)'})$ are orthogonal. 
Indeed in this case 
there exist $j, j'$ such that 
$d_{\bullet}^{(j)} \prec d_{\bullet}^{(j)'}$
and $d_{\bullet}^{(j)} \succ d_{\bullet}^{(j')'}$
so the orthogonality follows from 
the last statement of Theorem~\ref{thm:SOD}. 
Similarly if $\lvert d_{\bullet}^{(\ast)'} \rvert \succ \lvert d_{\bullet}^{(\ast)}\rvert$, 
then there is $j$ such that 
$\lvert d_{\bullet}^{(j)'} \rvert \succ \lvert d_{\bullet}^{(j)}\rvert$
so we have 
$\Hom(\cC(d_{\bullet}^{(\ast)'}), \cC(d_{\bullet}^{(\ast)}))=0$. 
Therefore the theorem holds. 
	\end{proof}

	\subsection{Some versions for categories of factorizations}\label{subsec:someversion}
	We will use some variants of Theorem~\ref{thm:SOD:mult} for 
	categories of factorizations on some formal completions.  
	Let 
	\begin{align*}
			\mM_{Q^{(m)}}(d^{(\ast)}) \to M_{Q^{(m)}}(d^{(\ast)})
		\end{align*}
	be the good moduli space. 
	Let 
	$R$ be a complete local $\mathbb{C}$-algebra 
	with closed point $0 \in \Spec R$. 
	We denote by $\widehat{M}_{Q^{(m)}}(d^{(\ast)})_R$ the 
	following formal completion of $M_{Q^{(m)}}(d^{(\ast)}) \times \Spec R$
	\begin{align*}
		\widehat{M}_{Q^{(m)}}(d^{(\ast)})_R
		 \cneq \Spec \widehat{\oO}_{M_{Q^{(m)}}(d^{(\ast)}) \times \Spec R, (0, 0)},
		\end{align*}
	and take the following formal fibers
	\begin{align}\label{dia:formal}
		\xymatrix{
		\widehat{\gG}^{(m)}_{a, b}(d^{(\ast)})_R \ar[r] \ar[d]  \diasquare
		& \gG_{a, b}^{(m)}(d^{(\ast)}) \times \Spec R \ar[d] \\
		\widehat{\mM}_{Q^{(m)}}(d^{(\ast)})_R \ar[r] \ar[d] \diasquare
		& \mM_{Q^{(m)}}(d^{(\ast)}) \times \Spec R \ar[d] \\
		\widehat{M}_{Q^{(m)}}(d^{(\ast)})_R \ar[r]
		& M_{Q^{(m)}}(d^{(\ast)}) \times \Spec R. 
	}
		\end{align}
	Here the upper right vertical arrow is the projection.  
	We consider an auxiliary $\C$-action on 
	$\gG_{a, b}^{(m)}(d^{(\ast)})$ acting on 
	maps from $\infty$ to each vertex $\{1, \ldots, m\}$ with weight one, 
	and acts on $\Spec R$ trivially. 
	Then it induces the $\C$-action on 
	$\widehat{\gG}_{a, b}^{(m)}(d^{(\ast)})_R$. 
	We take a regular function 
	\begin{align}\label{funct:formal}
		w \colon \widehat{\gG}_{a, b}^{(m)}(d^{(\ast)})_R
		\to \mathbb{A}^1
		\end{align}
	which is of weight one with respect to the above 
	$\C$-action. 
	We consider the triangulated 
	category of $\C$-equivariant 
	factorizations $\MF^{\C}(\widehat{\gG}_{a, b}^{(m)}(d^{(\ast)})_R, w)$. 
	
	The diagram (\ref{dia:Glm}) 
	extends to the diagram 
	\begin{align}\label{dia:formal0}
	\xymatrix{\gG_{a, b}^{(m)}(d^{(\ast)})^{\lambda \ge 0} \times \Spec R
		 \ar[r] \ar[d]  
		& \gG_{a, b}^{(m)}(d^{(\ast)}) \times \Spec R \ar[dd] \\
		\mM_{Q^{(m)}}(d_1^{(\ast)})
		\times \gG_{a, b}^{(m)}(d_2^{(\ast)})\times \Spec R
		\ar[d] 
		&  \\
		M_{Q^{(m)}}(d_1^{(\ast)}) \times
		M_{Q^{(m)}}(d_2^{(\ast)}) \times \Spec R
		 \ar[r]^-{\oplus}
		& M_{Q^{(m)}}(d^{(\ast)}) \times \Spec R. 
	}
\end{align}
Here the right vertical arrow and the left bottom vertical arrow
are compositions in the right vertical arrow in (\ref{dia:formal}). 	
By taking pull-back via the bottom horizontal arrow in (\ref{dia:formal}), 
and taking account of the function (\ref{funct:formal}), 
we obtain the diagram	
	\begin{align}\label{dia:formal2}
	\xymatrix{\widehat{\gG}_{a, b}^{(m)}(d^{(\ast)})^{\lambda \ge 0}_R
		\ar[r] \ar[d]   
		& \widehat{\gG}_{a, b}^{(m)}(d^{(\ast)})_R \ar[dd] \ar[rd]^-{w} &\\
		\widehat{\mM}_{Q^{(m)}}(d_1^{(\ast)})_R \widehat{\times}_R
		\widehat{\gG}_{a, b}^{(m)}(d_2^{(\ast)})_R 
		\ar[d] \ar[rr]^{(0, w')}
		& & \mathbb{A}^1 \\
		\widehat{M}_{Q^{(m)}}(d_1^{(\ast)})_R \widehat{\times}_R
		\widehat{M}_{Q^{(m)}}(d_2^{(\ast)})_R
		\ar[r]^-{\oplus}
		& \widehat{M}_{Q^{(m)}}(d^{(\ast)})_R. &
	}
\end{align}
	Here in the above diagram, the function 
	$w$ descends to a function of the form 
	$(0, w')$ in the 
	middle horizontal arrow by the $\C$-weight one condition of $w$. 
	By the abuse of notation, we also denote $w'$ by $w$. 
	From the above diagram, the Hall product (\ref{caHall:mult}) induces 
	the one for formal completions
	\begin{align}\label{Hprod:hat}
		\ast \colon 
		D^b(\widehat{\mM}_{Q^{(m)}}(d_1^{(\ast)})_R) \widehat{\boxtimes}_R
	\MF^{\C}(\widehat{\gG}_{a, b}^{(m)}(d_2^{(\ast)})_R, w)	
	\to \MF^{\C}(\widehat{\gG}_{a, b}^{(m)}(d^{(\ast)})_R, w). 
		\end{align}
	
	The window subcategories 
\begin{align}\label{abuse}
	\widehat{\mathbb{W}}(d^{(\ast)}) \subset 
	D^b(\widehat{\mM}_{Q^{(m)}}(d^{(\ast)})_R), \ 
	\widehat{\mathbb{W}}_c(d^{(\ast)}) \subset \MF^{\C}(\widehat{\gG}_{a, b}^{(m)}(d^{(\ast)})_R, w)
\end{align}
are also defined similarly to (\ref{box:product}): 
when $m=1$ and $d^{(\ast)}=d$, they 
are the smallest thick 
triangulated subcategories which contain
$\oO_{\widehat{\mM}_Q(d)_R}$, 
factorizations with entries 
direct sums of
$V(\chi) \otimes \oO_{B\C}(j)\otimes\oO$
for $\chi \in \mathbb{B}_c(d)$ and $j\in \mathbb{Z}$, respectively. 
Here $\oO_{B\C}(j)$ is the one dimensional $\C$-representation with weight 
$j$. For $m>1$, they are defined to be the box-products of 
window subcategories of
each factors as in (\ref{box:product}).
	The result of Theorem~\ref{thm:SOD:mult} immediately 
implies the following variant of it 
(for example see the argument of~\cite[Corollary~4.22]{Toconi}):
\begin{thm}\label{thm:SOD2}
	We take $c>b$ with $l \cneq c-b>0$. 
	For each $d_{\bullet}=(d_1, \ldots, d_l) \in \mathbb{Z}_{\ge 0}^l$, 
the functors (\ref{Hprod:hat}) induce the fully-faithful functor 
	\begin{align*}
		\ast \colon 	\bigoplus_{\begin{subarray}{c}
				(d_1^{(\ast)}, \ldots, d_r^{(\ast)}) \\
				\lvert d_i^{(\ast)} \rvert=d_i
		\end{subarray}}
		\widehat{\mathbb{W}}(d_1^{(\ast)}) \widehat{\boxtimes}_R
		(\widehat{\mathbb{W}}(d_2^{(\ast)}) \otimes \chi_0)
		\widehat{\boxtimes}_R \cdots \widehat{\boxtimes}_R (\widehat{\mathbb{W}}(d_l^{(\ast)}) \otimes \chi_0^{l-1})
		&\widehat{\boxtimes}_R (\widehat{\mathbb{W}}_b(d-\sum_{i=1}^l d_i) \otimes \chi_0^l) \\
		&\to \widehat{\mathbb{W}}_c(d^{(\ast)})
	\end{align*}
	such that, 
	by setting 
	$\widehat{\cC}(d_{\bullet}) \subset 
	\MF^{\C}(\widehat{\gG}_{a, b}^{(m)}(d^{(\ast)})_R, w)$ to be the 
	essential image of the above functor, 
	we have the semiorthogonal decomposition 
	\begin{align*}
		\widehat{\mathbb{W}}_c(d^{(\ast)})=
		\langle \widehat{\cC}(d_{\bullet}) : 
		d_{\bullet} \in \mathbb{Z}_{\ge 0}^l \rangle. 
	\end{align*}
	Here $\Hom(\widehat{\cC}(d'_{\bullet}), \widehat{\cC}(d_{\bullet}))=0$
	for $d_{\bullet}' \succ d_{\bullet}$. 
\end{thm}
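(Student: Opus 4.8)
The plan is to deduce Theorem~\ref{thm:SOD2} from the already established Theorem~\ref{thm:SOD:mult} in three stages: first pass to the relative situation over $\Spec R$ by flat base change, then pass to the formal completion along the good moduli space as in diagram (\ref{dia:formal}), and finally incorporate the potential $w$ to land in categories of factorizations. The guiding principle is that every ingredient entering Theorem~\ref{thm:SOD:mult} --- the categorified Hall product functors, the window subcategories, and the Hom-vanishing used to establish semiorthogonality --- is defined through the attracting-loci diagrams and the weights of the one parameter subgroups $\lambda$ appearing in (\ref{set:lambda2}) and (\ref{lambdai}). All of these data are insensitive to the three operations above, since they commute with flat base change, with completion, and with the auxiliary $\C$-action defining the factorization grading.

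First I would establish the statement for the derived categories of the formal stacks, i.e.\ the version of Theorem~\ref{thm:SOD:mult} with $\gG_{a, b}^{(m)}(d^{(\ast)})$ replaced by $\widehat{\gG}_{a, b}^{(m)}(d^{(\ast)})_R$ and $w=0$. The generation statement (the analogues of Lemma~\ref{lem:gen} and Proposition~\ref{prop:gen}) is purely representation-theoretic: it rests on the Borel--Weil--Bott computation of Proposition~\ref{prop:Hall}, which only records which $V(\chi) \otimes \oO$ occur and is unchanged by base change or completion. The fully-faithfulness and semiorthogonality (the analogue of Proposition~\ref{prop:sod}) follow from the weight-vanishing Lemma~\ref{lem:vanish}; since the attracting locus $\widehat{\gG}_{a, b}^{(m)}(d^{(\ast)})^{\lambda \ge 0}_R$ in (\ref{dia:formal2}) is the flat pullback of the one in (\ref{dia:Glm}), and the fixed locus $\widehat{\mM}_{Q^{(m)}}(d_1^{(\ast)})_R \widehat{\times}_R \widehat{\gG}_{a, b}^{(m)}(d_2^{(\ast)})_R$ carries the same $\lambda$-weight grading, the same Hom computations go through verbatim.

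Next I would incorporate the potential. By the construction in diagram (\ref{dia:formal2}), the $\C$-weight one function $w$ restricts to the $\lambda \ge 0$ locus and descends, along $q_{\lambda}$, to a function which on the $\lambda=0$ locus has the form $(0, w')$; consequently the Hall product (\ref{Hprod:hat}) is well defined at the level of factorizations, and the window subcategories $\widehat{\mathbb{W}}_c(d^{(\ast)}) \subset \MF^{\C}(\widehat{\gG}_{a, b}^{(m)}(d^{(\ast)})_R, w)$ are split generated by the factorizations specified in (\ref{abuse}). The point is then that the semiorthogonal decomposition established in the previous step is compatible with the factorization structure: the $\C$-action providing the factorization differential commutes with all the $\lambda$'s, so the weight-vanishing Lemma~\ref{lem:vanish} has a factorization analogue proved by the identical computation, while the generators of $\widehat{\mathbb{W}}_c$ listed in (\ref{abuse}) are factorizations whose entries are precisely the $V(\chi) \otimes \oO$ generating the corresponding $D^b$-window, so the generation combinatorics carries over unchanged. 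This transfer of a $D^b$-semiorthogonal decomposition to the corresponding factorization categories is precisely the mechanism used in~\cite[Corollary~4.22]{Toconi}, and the argument applies once the functors (\ref{Hprod:hat}) are in place.

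The main obstacle I anticipate is exactly this last transfer: a priori, passing from $D^b$ of the ambient smooth stack to $\MF^{\C}$ of the potential can alter both Hom-groups and the notion of generation, so one must check that the semiorthogonal structure is genuinely preserved rather than merely formally transported. The resolution is the observation that the factorization grading is induced by the auxiliary $\C$-action of (\ref{funct:formal}), which commutes with each $\lambda$ entering the Kempf--Ness / $\Theta$-stratification; hence the $\lambda$-weight decompositions underlying Lemma~\ref{lem:vanish} are refined, not destroyed, by the factorization grading, and both the fully-faithfulness of $\ast$ and the orthogonality $\Hom(\widehat{\cC}(d'_{\bullet}), \widehat{\cC}(d_{\bullet}))=0$ for $d'_{\bullet} \succ d_{\bullet}$ survive. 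The induction on $c$ upgrading the case $c=b+1$ to general $c$ is then formally identical to the deduction of Theorem~\ref{thm:SOD} from Proposition~\ref{prop:sod} via Proposition~\ref{prop:gen}.
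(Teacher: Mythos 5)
Your proposal is correct and follows essentially the same route as the paper: the paper's proof consists of the single remark that Theorem~\ref{thm:SOD:mult} "immediately implies" the statement via the argument of \cite[Corollary~4.22]{Toconi}, i.e.\ base change to the formal completion plus the standard transfer of a semiorthogonal decomposition of $D^b$ of the ambient smooth stack to the corresponding categories of ($\C$-equivariant) factorizations. Your elaboration of why the weight-vanishing and generation arguments survive completion and the factorization grading is exactly the content being invoked.
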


\section{Quot formula of relative dimension one}
\subsection{Relative Quot schemes}
Let $S$ be a smooth quasi-projective scheme 
and 
\begin{align*}
	\pi \colon \cC \to S
	\end{align*} 
be a smooth projective morphism of relative dimension one. 
The stack of $S$-relative zero-dimensional sheaves of length $d$ is 
given by the 2-functor
\begin{align*}
	\mM_{\cC/S}(d) \colon (Sch/S) \to (Groupoid)
	\end{align*}
which sends $T \to S$ to the groupoid of 
$T$-flat sheaves $\pP \in \Coh(\cC_T)$
such that for any $t \in T$, the object
$\pP_t \in \Coh(\cC_t)$ is zero-dimensional of length $d$. 
The stack $\mM_{\cC/S}(d)$ is a smooth Artin stack 
such that the structure morphism 
\begin{align*}
	\mM_{\cC/S}(d) \to S
	\end{align*}
is smooth whose fiber at $s \in S$ is the 
stack of zero-dimensional sheaves of length $d$ on the smooth curve 
$\cC_s$. 

For $\eE \in \Coh(\cC)$ with $r \cneq \rank(\eE) \ge 0$, 
let
$\mM_{\cC/S}(\eE, d)$ be the 2-functor 
\begin{align}\label{2funct}
	\mM_{\cC/S}(\eE, d) \colon 
	(Sch/S) \to (Groupoid)
\end{align}
by sending $T$ to the groupoid of pairs 
$(\pP, u)$ where $\pP$ is a $T$-valued point 
of $\mM_{\cC/S}(d)$ and $u \colon \eE_T \to \pP$ is a morphism. 
The set of isomorphisms is given by commutative diagrams 
\begin{align*}
	\xymatrix{
\eE_T \ar[r]^-{u} \ar@{=}[d] & \pP \ar[d]^-{\cong} \\
\eE_T \ar[r]^-{u'} & \pP'. 	
}
	\end{align*}
The 2-functor (\ref{2funct}) is an Artin stack
with morphisms
\begin{align}\label{factor:E0}
	\mM_{\cC/S}(\eE, d) \to \mM_{\cC/S}(d) \to S 
	\end{align}
where the first arrow is forgetting $u$. 

The stack $\mM_{\cC/S}(\eE, d)$ contains an open substack
\begin{align*}
	\Quot_{\cC/S}(\eE, d) \subset \mM_{\cC/S}(\eE, d)
	\end{align*}
corresponding to $(\pP, u)$ such that $u \colon \eE_T \to \pP$ is surjective. 
The substack $\Quot_{\cC/S}(\eE, d)$ is the $S$-relative Grothendieck 
Quot scheme of $\eE$, and it is a projective scheme over $S$. 
The fiber at $s \in S$ is the 
Quot scheme which parametrizes 
surjections $\eE_s \twoheadrightarrow \qQ$
where $\qQ \in \Coh(\cC_s)$ is zero-dimensional of length $d$. 

If $\eE$ is locally free, then 
the first arrow in (\ref{factor:E0})
is the total space of a vector bundle over $\mM_{\cC/S}(d)$. 
Indeed let 
\begin{align}\label{univ:Q}
	\qQ \in \Coh(\cC \times_S \mM_{\cC/S}(d))
\end{align} be 
the universal zero-dimensional sheaf, and $p_1, p_2$ be the projections 
from $\cC \times_S \mM_{\cC/S}(d)$ onto the corresponding factors. 
Then 
$p_{2\ast}(\hH om(p_1^{\ast}\eE, \qQ))$ is a locally free sheaf on $\mM_{\cC/S}(d)$, 
and 
we have 
\begin{align*}
	\mM_{\cC/S}(\eE, d)=\mathrm{Tot}(p_{2\ast}(\hH om(p_1^{\ast}\eE, \qQ)))
	=\Spec_{\mM_{\cC/S}(d)} \mathrm{Sym}(p_{2\ast}(\eE^{\vee} \boxtimes \qQ)^{\vee}). 
\end{align*}
In particular in this case $\mM_{\cC/S}(\eE, d)$ is smooth of relative dimension 
$rd$. 

Let $W$ be a $d$-dimensional vector space, 
and $\GL_S(W) \cneq \GL(W) \times S \to S$
be the group scheme over $S$. 
Let 
\begin{align*}
	\Quot_{\cC/S}^{\circ}(W \otimes \oO_{\cC}, d) \subset 
	\Quot_{\cC/S}(W \otimes \oO_{\cC}, d)
	\end{align*}
be the open subscheme whose $T$-valued points 
correspond to $u \colon W \otimes \oO_{\cC_T} \twoheadrightarrow \pP$
such that the induced morphism 
$W \otimes \oO_T \to \pi_{T\ast}\pP$ is an isomorphism. 
Since any zero-dimensional sheaf is globally generated, 
the first morphism in (\ref{factor:E0})
induces the isomorphism over $S$
\begin{align}\label{isom:QuotW}
	[\Quot_{\cC/S}^{\circ}(W \otimes \oO_{\cC}, d)/\GL_S(W)]
	\stackrel{\cong}{\to} \mM_{\cC/S}(d). 
	\end{align}

\subsection{Derived structures of relative Quot schemes}
Suppose that $\eE$ has homological dimension less than 
or equal to one. 
Then there is a locally free resolution 
\begin{align}\label{loc:free}
	0 \to \eE^{-1} \stackrel{\phi}{\to} \eE^0 \to \eE \to 0
\end{align}
such that 
\begin{align}\label{bundle:phi}
\hH \cneq \mathcal{E}xt^1_{\oO_{\cC}}(\eE, \oO_{\cC})
	=\mathrm{Cok}(\phi^{\vee} \colon \eE_0 \to \eE_1). 
\end{align}
Here we have set $\mathcal{E}_0 \cneq (\mathcal{E}^0)^{\vee}$
and $\mathcal{E}_{1} \cneq (\mathcal{E}^{-1})^{\vee}$. 
The morphism $\phi$ induces the morphism of vector 
bundles on $\mM_{\cC/S}(d)$
\begin{align*}
	\phi \colon 
	\mM_{\cC/S}(\eE^0, d)  \to \mM_{\cC/S}(\eE^{-1}, d)
	\end{align*}
which, on $T$-valued points, is defined by 
\begin{align}\label{mor:phi}
	(u \colon \eE_T^0 \to \pP) \mapsto (u \circ \phi \colon 
	\eE_T^{-1} \stackrel{\phi}{\to} \eE_T^0 \stackrel{u}{\to} \pP). 
\end{align}
We define the derived stack $\mathbf{M}_{\cC/S}(\eE, d)$
by the derived Cartesian square
\begin{align}\label{dia:MC}
	\xymatrix{
\mathbf{M}_{\cC/S}(\eE, d) \ar[r] \ar[d] \diasquare & \mM_{\cC/S}(d) \ar[d]^-{0} \\
\mM_{\cC/S}(\eE^0, d) \ar[r]^-{\phi} & \mM_{\cC/S}(\eE^{-1}, d). 
}
	\end{align}
Here the right vertical arrow is the zero section
of the vector bundle $\mM_{\cC/S}(\eE^{-1}, d) \to \mM_{\cC/S}(d)$. 
The classical truncation of $\mathbf{M}_{\cC/S}(\eE, d)$ is isomorphic 
to $\mM_{\cC/S}(\eE, d)$. 

The surjection $\mathcal{E}^0 \twoheadrightarrow \eE$
induces the closed immersion 
\begin{align}\label{quot:closed}
	\Quot_{\cC/S, d}(\eE) \hookrightarrow 
	\Quot_{\cC/S, d}(\mathcal{E}^0),
\end{align}
where the target is an open substack of $\mM_{\cC/S}(\eE^0, d)$. 
We define the quasi-smooth 
derived scheme $\mathbf{Quot}_{\cC/S}(\eE, d)$ over $S$
by the Cartesian square 
\begin{align}\label{dia:MC2}
	\xymatrix{
	\mathbf{Quot}_{\cC/S}(\eE, d) \ar@<-0.3ex>@{^{(}->}[r] \ar[d] \diasquare & \mathbf{M}_{\cC/S}(\eE, d) \ar[d] \\
	\Quot_{\cC/S}(\eE^0, d) \ar@<-0.3ex>@{^{(}->}[r] & \mM_{\cC/S}(\eE^{0}, d). 
}
		\end{align}
	Here horizontal arrows are open immersions. 
Note that $\mathbf{Quot}_{\cC/S}(\eE, d)$ is a derived open substack of 
$\mathbf{M}_{\cC/S}(\eE, d)$,
with virtual dimension $\dim S+rd$, and
 whose classical truncation is 
$\Quot_{\cC/S}(\eE, d)$. 

By taking the dual of the sequence (\ref{loc:free}), 
we obtain the exact sequence 
\begin{align*}
	\eE_0 \stackrel{\phi^{\vee}}{\to} \eE_1 \to \hH \to 0. 
	\end{align*}
Similarly to (\ref{dia:MC}) and (\ref{dia:MC2}), we 
define $\mathbf{M}_{\cC/S}(\hH, d)$ 
and $\mathbf{Quot}_{\cC/S}(\hH, d)$ by the 
derived Cartesian squares 
\begin{align}\label{dia:dual}
	\xymatrix{
\mathbf{Quot}_{\cC/S}(\hH, d) \ar@<-0.3ex>@{^{(}->}[r] \ar[d] \diasquare & 
		\mathbf{M}_{\cC/S}(\hH, d) \ar[r] \ar[d] \diasquare & \mM_{\cC/S}(d) \ar[d]^-{0} \\
		\Quot_{\cC/S}(\eE_1, d) \ar@<-0.3ex>@{^{(}->}[r] & 
		\mM_{\cC/S}(\eE_1, d) \ar[r]^-{\phi^{\vee}} & \mM_{\cC/S}(\eE_0, d). 
	}
\end{align}
Here the bottom right arrow is defined similarly to (\ref{mor:phi})
from $\phi^{\vee} \colon \eE_0 \to \eE_1$. 
The derived stack $\mathbf{Quot}_{\cC/S}(\hH, d)$
is a derived open substack of 
$\mathbf{M}_{\cC/S}(\hH, d)$
 with virtual dimension $\dim S-rd$, and its classical 
truncation is $\Quot_{\cC/S}(\hH, d)$. 

\subsection{$(-1)$-shifted cotangent construction}
By applying the construction in (\ref{funct:Fw})
for the morphism of vector bundles (\ref{bundle:phi}), 
we obtain the stack $\nN_{\cC/S}(\eE^{\bullet}, d)$ 
with a regular function $w$
\begin{align*}
	\nN_{\cC/S}(\eE^{\bullet}, d) \cneq \mM_{\cC/S}(\eE^0, d) \times_{\mM_{\cC/S}(d)}
	\mM_{\cC/S}(\eE^{-1}, d)^{\vee} \stackrel{w}{\to} \mathbb{A}^1. 
	\end{align*}
By the Grothendieck duality, the $T$-valued 
points of the stack $\nN_{\cC/S}(\eE^{\bullet}, d)$
consist of 
\begin{align}\label{Tvalue}
	(\eE_T^0 \stackrel{u}{\to} \pP \stackrel{v}{\to} \eE_T^{-1} \otimes \omega_{\cC_T/T}[1])
	\end{align}
where $\pP$ is a $T$-valued point of $\mM_{\cC/S}(d)$. 
For an affine $T$, the function $w$ on the 
$T$-valued point (\ref{Tvalue}) 
is given by 
\begin{align}\label{tr:1}
	\mathrm{Tr}(\eE_T^0 \stackrel{u}{\to} \pP \stackrel{v}{\to} \eE_T^{-1} \otimes \omega_{\cC_T/T}[1]
	\stackrel{\phi}{\to} \eE_T^0 \otimes \omega_{\cC_T/T}[1])
	\in H^1(\cC_T, \omega_{\cC_T/T}) =\oO_{T}. 
	\end{align}
By the construction, we have the isomorphism (see Subsection~\ref{subsec:Koszul})
\begin{align*}
	t_0(\Omega_{\mathbf{M}_{\cC/S}(\eE, d)}[-1]) \cong 
	\Crit(w) \subset \nN_{\cC/S}(\eE^{\bullet}, d). 
	\end{align*}

By applying the similar construction for the 
bottom right arrow in (\ref{dia:dual}), 
we obtain the stack 
$\nN_{\cC/S}(\eE_{\bullet}, d)$ 
with a regular function $w^{\vee}$ on it
\begin{align*}
	\nN_{\cC/S}(\eE_{\bullet}, d) \cneq \mM_{\cC/S}(\eE_1, d) \times_{\mM_{\cC/S}(d)}
	\mM_{\cC/S}(\eE_0, d)^{\vee} \stackrel{w^{\vee}}{\to} \mathbb{A}^1. 
	\end{align*}
The $T$-valued 
points of the stack $\nN_{\cC/S}(\eE_{\bullet}, d)$
consist of 
\begin{align}\label{Tvalue2}
	((\eE_1)_T \stackrel{u'}{\to} \pP \stackrel{v'}{\to} (\eE_0)_T \otimes \omega_{\cC_T/T}[1])
\end{align}
where $\pP$ is a $T$-valued point of $\mM_{\cC/S}(d)$. 
For an affine $T$, the function $w^{\vee}$ on the 
$T$-valued point (\ref{Tvalue2}) 
is given by 
\begin{align}\label{tr:2}
	\mathrm{Tr}((\eE_1)_T \stackrel{u'}{\to} \pP \stackrel{v'}{\to} (\eE_0)_T \otimes \omega_{\cC_T/T}[1]
	\stackrel{\phi^{\vee}}{\to} (\eE_1)_T \otimes \omega_{\cC_T/T}[1])
	\in H^1(\cC_T, \omega_{\cC_T/T}) =\oO_{T}. 
\end{align}
We also have the isomorphism 
\begin{align*}
	t_0(\Omega_{\mathbf{M}_{\cC/S}(\hH, d)}[-1]) \cong 
	\Crit(w^{\vee}) \subset \nN_{\cC/S}(\eE_{\bullet}, d). 
\end{align*}

For a $T$-valued point $\pP \in \Coh(\cC_T)$ of $\mM_{\cC/S}(d)$, 
we define 
\begin{align*}
	\pP^{\vee} \cneq \dR \hH om_{\cC_T}(\pP, \omega_{\cC_T/T}[1])
	=\eE xt^1_{\cC_T}(\pP, \omega_{\cC_T/T}) \in \Coh(\cC_T). 
	\end{align*}
The above object also gives a $T$-valued point of $\mM_{\cC/S}(d)$, 
so we obtain the involution isomorphism 
\begin{align}\label{M:invo}
	\mathbb{D} \colon \mM_{\cC/S}(d) \stackrel{\cong}{\to} \mM_{\cC/S}(d), \ 
	\pP \mapsto \pP^{\vee}.
	\end{align}

\begin{lem}\label{lem:inv}
	There is an isomorphism 
	\begin{align}\label{isom:inv2}
		\mathbb{D} \colon 
		\nN_{\cC/S}(\eE^{\bullet}, d) \stackrel{\cong}{\to}\nN_{\cC/S}(\eE_{\bullet}, d)
		\end{align}
	such that the following diagram commutes: 
	\begin{align}\label{dia:commute}
		\xymatrix{
& \mathbb{A}^1 & \\
\nN_{\cC/S}(\eE^{\bullet}, d) \ar[ru]^-{w} \ar[rr]^-{\cong}_-{\mathbb{D}} \ar[d]
 & &  \nN_{\cC/S}(\eE_{\bullet}, d) \ar[d]	\ar[lu]_-{w^{\vee}} \\
\mM_{\cC/S}(d) \ar[rr]^-{\cong}_-{\mathbb{D}} & & \mM_{\cC/S}(d). 	
}
		\end{align}
	Here the vertical arrows are projections. 
	\end{lem}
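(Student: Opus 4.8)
The plan is to define $\mathbb{D}$ by applying relative Grothendieck--Serre duality $\dR\hH om_{\cC_T}(-, \omega_{\cC_T/T}[1])$ to the entire triple (\ref{Tvalue}), and then to verify separately the lower square and upper triangle of (\ref{dia:commute}).

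First I would construct the morphism (\ref{isom:inv2}) on $T$-valued points. Given a point $(\eE_T^0 \stackrel{u}{\to} \pP \stackrel{v}{\to} \eE_T^{-1}\otimes\omega_{\cC_T/T}[1])$ of $\nN_{\cC/S}(\eE^{\bullet}, d)$, apply the duality functor $\dR\hH om_{\cC_T}(-, \omega_{\cC_T/T}[1])$. Since $\eE^0$ and $\eE^{-1}$ are locally free with $\eE_0 = (\eE^0)^{\vee}$ and $\eE_1 = (\eE^{-1})^{\vee}$, one has canonical identifications $\dR\hH om_{\cC_T}(\eE_T^0, \omega_{\cC_T/T}[1]) \cong (\eE_0)_T\otimes\omega_{\cC_T/T}[1]$ and $\dR\hH om_{\cC_T}(\eE_T^{-1}\otimes\omega_{\cC_T/T}[1], \omega_{\cC_T/T}[1]) \cong (\eE_1)_T$. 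Dualizing $u$ and $v$ then yields morphisms $v^{\vee}\colon (\eE_1)_T \to \pP^{\vee}$ and $u^{\vee}\colon \pP^{\vee} \to (\eE_0)_T\otimes\omega_{\cC_T/T}[1]$, i.e. a $T$-valued point $((\eE_1)_T \stackrel{v^{\vee}}{\to} \pP^{\vee} \stackrel{u^{\vee}}{\to} (\eE_0)_T\otimes\omega_{\cC_T/T}[1])$ of $\nN_{\cC/S}(\eE_{\bullet}, d)$ of the form (\ref{Tvalue2}). Performing this relative to the universal sheaf (\ref{univ:Q}) gives the morphism of stacks $\mathbb{D}$; it is an isomorphism because $\dR\hH om_{\cC_T}(-, \omega_{\cC_T/T}[1])$ applied twice is canonically the identity (the zero-dimensional $\pP$ is reflexive for this duality, as already used in (\ref{M:invo}), and $\eE^0, \eE^{-1}$ are reflexive), so $\mathbb{D}$ is an involution.

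The lower square of (\ref{dia:commute}) is immediate from the construction: $\mathbb{D}$ sends the underlying sheaf $\pP$ of a point of $\nN_{\cC/S}(\eE^{\bullet}, d)$ to $\pP^{\vee}$, which is precisely the involution (\ref{M:invo}) on $\mM_{\cC/S}(d)$. For the upper triangle I would compare the trace descriptions (\ref{tr:1}) and (\ref{tr:2}). The function $w$ is the trace of the endomorphism $\phi\circ v\circ u$ of $\eE_T^0$ (valued in $\otimes\,\omega_{\cC_T/T}[1]$), whereas $w^{\vee}\circ\mathbb{D}$ is the trace of $\phi^{\vee}\circ u^{\vee}\circ v^{\vee}$ of $(\eE_1)_T$. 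Applying $\dR\hH om_{\cC_T}(-, \omega_{\cC_T/T}[1])$ carries $\phi\circ v\circ u$ to $u^{\vee}\circ v^{\vee}\circ\phi^{\vee}$ (the functor is contravariant and sends the twisted $\phi$ to $\phi^{\vee}$), and this relative duality preserves the trace; cyclic invariance of the trace then identifies $\mathrm{Tr}(u^{\vee}\circ v^{\vee}\circ\phi^{\vee})$ with $\mathrm{Tr}(\phi^{\vee}\circ u^{\vee}\circ v^{\vee})$. Combining the two gives $w = w^{\vee}\circ\mathbb{D}$, which is the commutativity of (\ref{dia:commute}).

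The main obstacle is to make the whole construction functorial in families rather than merely on $T$-points: one must build $\mathbb{D}$ from relative duality applied to the universal object over $\mM_{\cC/S}(d)$, and then rigorously justify both the invariance of the relative trace under $\dR\hH om_{\cC_T}(-, \omega_{\cC_T/T}[1])$ and its cyclic invariance, using the relative Serre duality isomorphism $H^1(\cC_T, \omega_{\cC_T/T}) \cong \oO_T$ appearing in (\ref{tr:1}) and (\ref{tr:2}). Once these properties of the trace map are established, the equality of the two functions is formal.
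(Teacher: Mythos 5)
Your proposal is correct and follows essentially the same route as the paper: the paper also defines $\mathbb{D}$ by applying $\dR\hH om_{\cC_T}(-,\omega_{\cC_T/T}[1])$ to the triple (\ref{Tvalue}) to obtain a point of the form (\ref{Tvalue2}), and then asserts that the diagram commutes. Your extra verification of the trace identity (duality plus cyclic invariance) just fills in what the paper dismisses as obvious.
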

\begin{proof}
	By applying $\dR \hH om_{\cC_T/T}(-, \omega_{\cC_T/T}[1])$
	to (\ref{Tvalue}), we obtain
	\begin{align}\label{Tvalue3}
		((\eE_T)_1  \stackrel{v^{\vee}}{\to} \pP^{\vee} \stackrel{u^{\vee}}{\to}
		(\eE_T)_0 \otimes \omega_{\cC_T/T}[1]). 
		\end{align}
	The isomorphism (\ref{isom:inv2}) is obtained by assigning (\ref{Tvalue})
	with (\ref{Tvalue3}). 
	Then the diagram (\ref{dia:commute}) obviously commutes. 
	\end{proof}

\subsection{$\Theta$-stratification}
The stack $\mM_{\cC/S}(d)$ is the $S$-relative moduli stack of 
zero-dimensional semistable sheaves on $\cC$, 
so it admits a good moduli space
(which is nothing but the symmetric product)
\begin{align}\label{pi:MCS}
	\pi_{\mM} \colon 
	\mM_{\cC/S}(d) \to M_{\cC/S}(d) =\mathrm{Sym}^d(\cC/S). 
	\end{align}
A point $p \in \mathrm{Sym}^d(\cC/S)$ corresponds to 
a point $s \in S$ and an effective divisor on $\cC_s$ of degree $d$
\begin{align}\label{point:p}
	p=\sum_{j=1}^m d^{(j)} p^{(j)}, 	
	\end{align}
where $p^{(1)}, \ldots, p^{(m)}$ are distinct points in $\cC_s$
and $d^{(j)} \ge 0$ with 
$d^{(1)}+\cdots+d^{(m)}=d$. 
Let $R$ be the complete local $\mathbb{C}$-algebra 
$R=\widehat{\oO}_{S, s}$. 
Formally locally at $p \in M_{\cC/S}(d)$, the stack $\mM_{\cC/S}(d)$ is 
described by 
the following commutative diagram 
\begin{align}\label{dia:formal10}
	\xymatrix{
\widehat{\mM}_{Q^{(m)}}(d^{(\ast)})_R \ar[r]^-{\cong} \ar[d] 
\ar@/^1.5pc/[rr]^-{\iota_p}& \widehat{\mM}_{\cC/S}(d)_p \ar[r] \ar[d]^-{\pi_p} 
\diasquare
& \mM_{\cC/S}(d) \ar[d]^-{\pi_{\mM}} \\
\widehat{M}_{Q^{(m)}}(d^{(\ast)})_R	\ar[r]^-{\cong} & \widehat{M}_{\cC/S}(d)_p \ar[r] & 
M_{\cC/S}(d).
}
	\end{align}
Here the middle vertical arrow is the formal fiber at $p$, 
and we have used the notation in Subsection~\ref{subsec:someversion} for the left vertical 
arrow.

The morphism 
$\nN_{\cC/S}(\eE^{\bullet}, d) \to \mM_{\cC/S}(d)$ is an affine morphism, 
so it also admits a good moduli space $N_{\cC/S}(\eE^{\bullet}, d)$ 
together with a commutative diagram 
\begin{align}\label{dia:gmoduli}
	\xymatrix{
\nN_{\cC/S}(\eE^{\bullet}, d) \ar[r] \ar[d] \ar[rd]^-{g} & \mM_{\cC/S}(d) \ar[d] \\
N_{\cC/S}(\eE^{\bullet}, d) \ar[r] & M_{\cC/S}(d). 
}
\end{align}
Indeed $N_{\cC/S}(\eE^{\bullet}, d)=\Spec g_{\ast}\oO_{\nN_{\cC/S}(\eE^{\bullet}, d)}$, where 
$g$ is the clockwise composition in the diagram (\ref{dia:gmoduli}). 

For the universal sheaf $\qQ$ in (\ref{univ:Q}), 
we set 
\begin{align*}
	\lL \cneq \det(p_{2\ast}\qQ) \in \Pic(\mM_{\cC/S}(d)), \ 
	b \cneq \ch_2(p_{2\ast}\qQ) \in H^4(\mM_{\cC/S}(d), \mathbb{Q}). 
\end{align*}
We also regard them as elements of 
$\Pic(\nN_{\cC/S}(\eE^{\bullet}, d))$, 
$H^4(\nN_{\cC/S}(\eE^{\bullet}, d), \mathbb{Q})$ by pulling back them via 
$\nN_{\cC/S}(\eE^{\bullet}, d) \to \mM_{\cC/S}(d)$. 
\begin{lem}
	The element $b \in H^4(\nN_{\cC/S}(\eE^{\bullet}, d), \mathbb{Q})$ is positive definite. 
	\end{lem}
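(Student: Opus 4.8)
The plan is to unwind the definition of positive definiteness and reduce the whole statement to a single weight computation on the tautological bundle. By definition I must show that for every non-degenerate $f \colon B\C \to \nN_{\cC/S}(\eE^{\bullet}, d)$ one has $q^{-2} f^{\ast} b > 0$. Since $b$ is by construction pulled back from $\mM_{\cC/S}(d)$ along the projection $\nN_{\cC/S}(\eE^{\bullet}, d) \to \mM_{\cC/S}(d)$, the first step is to compose $f$ with this projection to obtain $\bar{f} \colon B\C \to \mM_{\cC/S}(d)$, so that $f^{\ast} b = \bar{f}^{\ast} b$ and only the geometry of $\mM_{\cC/S}(d)$ enters.

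Next I would describe $\bar{f}$ concretely. A map $B\C \to \mM_{\cC/S}(d)$ necessarily factors over a closed point $s \in S$ and amounts to a zero-dimensional sheaf $\pP$ on the curve $\cC_s$ together with a cocharacter $\lambda \colon \C \to \Aut(\pP)$. Using the formal-local description (\ref{dia:formal10}) of $\mM_{\cC/S}(d)$ at the image point $p = \sum_{j=1}^m d^{(j)} p^{(j)} \in M_{\cC/S}(d)$, the automorphism group is identified with $\prod_{j=1}^m \GL(V^{(j)})$ with $\dim V^{(j)} = d^{(j)}$, and $\lambda$ becomes a cocharacter with integral weights $\lambda^{(j)}_i$ for $1 \le i \le d^{(j)}$. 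Under the isomorphism (\ref{isom:QuotW}) the bundle $p_{2\ast}\qQ$ is the tautological bundle attached to the standard representation, which at $p$ restricts to $\bigoplus_{j=1}^m V^{(j)}$, the sum of the standard representations of the factors $\GL(V^{(j)})$.

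The computation is then immediate: $\bar{f}^{\ast}(p_{2\ast}\qQ)$ is the $\C$-representation whose weights are $\{\lambda^{(j)}_i\}$, so as a vector bundle on $B\C$ it is $\bigoplus_{j, i} \oO_{B\C}(\lambda^{(j)}_i)$, and therefore
\begin{align*}
	q^{-2} f^{\ast} b = q^{-2}\bar{f}^{\ast}\ch_2(p_{2\ast}\qQ)
	= \frac{1}{2}\sum_{j=1}^m \sum_{i=1}^{d^{(j)}} (\lambda^{(j)}_i)^2 = \frac{1}{2}\lvert \lambda \rvert^2,
\end{align*}
where $\lvert \ast \rvert$ is exactly the Weyl-invariant norm on the cocharacter lattice of $\prod_j \GL(V^{(j)})$ fixed above. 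Finally, since the standard representation $\bigoplus_j V^{(j)}$ is faithful for $\prod_j \GL(V^{(j)})$, every nonzero cocharacter has at least one nonzero weight; as non-degeneracy of $f$ forces $\lambda \neq 0$ — here one uses that the automorphism group of a point of $\nN$ (a tuple as in (\ref{Tvalue})) is a subgroup of $\Aut(\pP)$, so the induced cocharacter on $\pP$ is again nonzero — we conclude $\lvert \lambda \rvert^2 > 0$ and hence $q^{-2} f^{\ast} b > 0$.

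The only genuinely delicate point is the identification of $p_{2\ast}\qQ$ with the standard representation $\bigoplus_j V^{(j)}$ in the formal-local chart (\ref{dia:formal10}), together with checking that the resulting quadratic form is precisely the chosen norm $\lvert \ast \rvert$ on $N_{\mathbb{R}}$; once this bookkeeping is in place the positivity is formal, coming down to the faithfulness of the standard representation. I also want to make sure that "non-degenerate" is read correctly, namely that it yields a nonzero cocharacter acting on $\pP$, which is exactly where the embedding $\Aut(\text{point of }\nN) \hookrightarrow \Aut(\pP)$ is invoked.
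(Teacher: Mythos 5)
Your proof is correct and is essentially the paper's argument: both reduce to computing $q^{-2}f^{\ast}\ch_2(p_{2\ast}\qQ)$ as one half the sum of squares of the $\C$-weights on $H^0(\pP)$ and then use non-degeneracy to produce a nonzero weight. The paper packages this more directly by writing the point of $\nN_{\cC/S}(\eE^{\bullet}, d)$ as $(\eE^0_s \to \pP_0 \to \eE^{-1}_s\otimes\omega_{\cC_s}[1]) \oplus \bigoplus_{j\neq 0}(0\to\pP_j\to 0)$ and getting $\sum_j \tfrac{1}{2}j^2\chi(\pP_j)>0$, so your detour through the formal-local chart $(\ref{dia:formal10})$ and the identification of $p_{2\ast}\qQ$ with the standard representation, while fine, is not needed.
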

\begin{proof}
	Let $f \colon B\C \to \nN_{\cC/S}(\eE^{\bullet}, d)$ be a non-degenerate 
	morphism. 
	It corresponds to a point $s \in S$ and a diagram 
	\begin{align*}
		(\eE^0_s \to \pP_0 \to \eE^{-1}_s \otimes \omega_{\cC_s}[1])
		\oplus \bigoplus_{j\neq 0} (0 \to \pP_j \to 0)
		\end{align*}
	where $\pP_j$ are zero-dimensional sheaves on $\cC_s$ and have
	$\C$-weight $j$. 
	As $f$ is non-degenerate, we have $\pP_j \neq 0$ for some $j \neq 0$. 
	Then we have 
	\begin{align*}
		q^{-2}f^{\ast}b=
		\sum_{j} \frac{1}{2}j^2 \cdot \chi(\pP_j)>0. 
		\end{align*}
	\end{proof}

By the above lemma, there exist associated $\Theta$-stratifications
with respect to $(\lL^{\pm 1}, b)$ 
\begin{align}\label{theta:N}
	\nN_{\cC/S}(\eE^{\bullet}, d)=
	\sS_0^{\pm} \sqcup \sS_1^{\pm} \sqcup \cdots \sqcup \sS_{N^{\pm}}^{\pm} \sqcup 
	\nN_{\cC/S}(\eE^{\bullet}, d)^{\lL^{\pm 1}\sss}. 	
	\end{align}
The stack $\nN_{\cC/S}(\eE^{\bullet}, d)$ and its $\Theta$-stratifications 
are described formally locally on $M_{\cC/S}(d)$ in the following way. 
For 
$p \in M_{\cC/S}(d)$, 
we write it as (\ref{point:p}), 
and set $R=\widehat{\oO}_{S, s}$. We have the commutative diagram 
\begin{align}\label{dia:formal20}
	\xymatrix{
		\widehat{\gG}_{a, b}^{(m)}(d^{(\ast)})_R 
		\ar[r]^-{\cong} \ar[d] \ar@/^1.5pc/[rr]^-{\iota_p}
		& \widehat{\nN}_{\cC/S}(\eE^{\bullet}, d)_p \ar[r] \ar[d] 
		\diasquare
		& \nN_{\cC/S}(\eE^{\bullet}, d) \ar[d]^-{g} \\
\widehat{M}_{Q^{(m)}}(d^{(\ast)})_R		\ar[r]^-{\cong} & 
	\widehat{M}_{\cC/S}(d)_p \ar[r] & 
	M_{\cC/S}(d).
	}
\end{align}
Here $a \cneq \rank \eE^0$, $b \cneq \rank \eE^{-1}$, 
and we have used the notation in Subsection~\ref{subsec:someversion} for the left vertical 
arrow. 
Since $(\lL, b)$ pulls back to $(\chi_0, \lvert \ast \rvert)$
under the top arrows in (\ref{dia:formal20}), 
the $\Theta$-stratification pulls back 
to the KN
stratifications in (\ref{KN:Gm}). 
 In particular, we have $N^{\pm}=d-1$.

 \subsection{Window subcategories}
 For the good moduli space morphism (\ref{pi:MCS}), 
 since $M_{\cC/S}(d)$ is smooth the functor 
 \begin{align}\label{funct:piast}
 	\pi_{\mM}^{\ast} \colon D^b(M_{\cC/S}(d)) \to D^b(\mM_{\cC/S}(d))
 	\end{align}
 is well-defined and it is fully-faithful by the definition of 
 good moduli space. 
 We define the triangulated subcategory 
 \begin{align}\label{window:M1}
 	\mathbb{W}_{\rm{glob}}(d) \subset D^b(\mM_{\cC/S}(d))
 	\end{align}
 to be the essential image of the functor (\ref{funct:piast}). 
 \begin{lem}\label{lem:loc1}
 	An object $\eE \in D^b(\mM_{\cC/S}(d))$ lies in 
 	$\mathbb{W}_{\rm{glob}}(d)$ if and only if for any $p \in M_{\cC/S}(d)$
 	as in (\ref{point:p}),
 	we have $\iota_p^{\ast}\eE \in \widehat{\mathbb{W}}(d^{(\ast)})$.
 	Here $\iota_p$ is given in (\ref{dia:formal10}) and 
 	$\widehat{\mathbb{W}}(d^{(\ast)})$ is given in (\ref{abuse}).  
 	\end{lem}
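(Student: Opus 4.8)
The plan is to characterize membership in $\mathbb{W}_{\rm{glob}}(d)$ by the adjunction counit and then to check that this condition is formal-local on $M_{\cC/S}(d)$. Since $\pi_{\mM}$ is a good moduli space morphism, $\pi_{\mM\ast}\oO_{\mM_{\cC/S}(d)}=\oO_{M_{\cC/S}(d)}$, the pushforward $\pi_{\mM\ast}$ is exact on quasi-coherent sheaves, and the projection formula holds; in particular $\pi_{\mM\ast}$ is right adjoint to the (fully faithful, by the remark after (\ref{funct:piast})) functor $\pi_{\mM}^{\ast}$. For such an adjoint pair the essential image $\mathbb{W}_{\rm{glob}}(d)$ of $\pi_{\mM}^{\ast}$ consists exactly of those $\eE$ for which the counit
\begin{align*}
	\theta_{\eE}\colon \pi_{\mM}^{\ast}\pi_{\mM\ast}\eE \to \eE
\end{align*}
is an isomorphism. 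Whether $\theta_{\eE}$ is an isomorphism is a condition on the cohomology sheaves that is local on $M_{\cC/S}(d)$ and can be tested after the flat completions $\widehat{M}_{\cC/S}(d)_p\to M_{\cC/S}(d)$; as these jointly hit all closed points $p$ they are jointly conservative, so it suffices to check that $\iota_p^{\ast}\theta_{\eE}$ is an isomorphism for every $p$ as in (\ref{point:p}).

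First I would dispose of the ``only if'' direction, which is immediate. If $\eE=\pi_{\mM}^{\ast}(F)$ for $F\in D^b(M_{\cC/S}(d))$, then the two lower Cartesian squares of (\ref{dia:formal10}) exhibit $\iota_p^{\ast}\eE$ as the pull-back of $F|_{\widehat{M}_{\cC/S}(d)_p}$ along the formal good moduli space morphism $\pi_p$. Transporting through the horizontal isomorphisms of (\ref{dia:formal10}), this realizes $\iota_p^{\ast}\eE$ as an object pulled back from $\widehat{M}_{Q^{(m)}}(d^{(\ast)})_R$, which by the completed analogue of Lemma~\ref{lem:pull} is precisely $\widehat{\mathbb{W}}(d^{(\ast)})$.

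For the ``if'' direction I would invoke flat base change for good moduli spaces. The formal fiber square in (\ref{dia:formal10}) is Cartesian and $\widehat{M}_{\cC/S}(d)_p\to M_{\cC/S}(d)$ is flat, so the formation of $\pi_{\mM\ast}$ commutes with this base change; hence, under the isomorphisms of (\ref{dia:formal10}), $\iota_p^{\ast}\theta_{\eE}$ is identified with the formal counit
\begin{align*}
	\pi_p^{\ast}\pi_{p\ast}(\iota_p^{\ast}\eE)\to \iota_p^{\ast}\eE.
\end{align*}
By hypothesis $\iota_p^{\ast}\eE\in \widehat{\mathbb{W}}(d^{(\ast)})$, and the completed analogue of Lemma~\ref{lem:pull} says that $\widehat{\mathbb{W}}(d^{(\ast)})$ is exactly the essential image of $\pi_p^{\ast}$, so this formal counit is an isomorphism for every $p$. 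By the conservativity observed in the first paragraph, $\theta_{\eE}$ is then an isomorphism, whence $\eE\in \mathbb{W}_{\rm{glob}}(d)$.

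The main obstacle is verifying the good-moduli-space inputs used implicitly above. I expect the delicate points to be: that $\pi_{\mM\ast}$ preserves bounded coherent complexes, so that $\theta_{\eE}$ is internal to $D^b(\mM_{\cC/S}(d))$ and the adjunction makes sense there; and that its formation commutes with the flat completion $\widehat{M}_{\cC/S}(d)_p\to M_{\cC/S}(d)$. Both hold here because $M_{\cC/S}(d)=\mathrm{Sym}^d(\cC/S)$ is smooth (which also guarantees $\pi_{\mM}^{\ast}$ lands in $D^b$) and $\pi_{\mM}$ is the cohomologically proper good moduli space morphism of a moduli stack of zero-dimensional sheaves. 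I would also need to record the completed version of Lemma~\ref{lem:pull}, namely that $\pi_p^{\ast}$ is fully faithful with essential image $\widehat{\mathbb{W}}(d^{(\ast)})$; this follows from Lemma~\ref{lem:genO} and Lemma~\ref{lem:pull} by passing to the formal completion along the bottom arrows of (\ref{dia:formal10}).
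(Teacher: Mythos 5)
Your proof is correct and follows essentially the same route as the paper: both characterize membership in $\mathbb{W}_{\rm{glob}}(d)$ by the adjunction counit $\pi_{\mM}^{\ast}\pi_{\mM\ast}\eE\to\eE$ being an isomorphism, observe that vanishing of its cone is formal-local on $M_{\cC/S}(d)$, and use flat base change to identify the restricted counit with the formal counit $\pi_p^{\ast}\pi_{p\ast}(\iota_p^{\ast}\eE)\to\iota_p^{\ast}\eE$, whose invertibility is equivalent to $\iota_p^{\ast}\eE\in\widehat{\mathbb{W}}(d^{(\ast)})$. Your version merely spells out more of the good-moduli-space inputs (coherence of $\pi_{\mM\ast}$, the completed analogue of Lemma~\ref{lem:pull}) that the paper leaves implicit.
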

 \begin{proof}
An object $\eE \in D^b(\mM_{\cC/S}(d))$ lies in 
$\mathbb{W}_{\rm{glob}}(d)$ if and only if the adjunction morphism 
\begin{align*}
	\pi_{\mM}^{\ast} \pi_{\mM\ast}\eE \to \eE
	\end{align*}
is an isomorphism. 
Let $\fF$ be the cone of the above morphism. 
Then the last condition is equivalent to that $\fF=0$. 
This property is formally local on $M_{\cC/S}(d)$, 
so it is equivalent to that $\iota_p^{\ast}\fF=0$ 
for any $p \in M_{\cC/S}(d)$. 
By the base change, it is also equivalent to that 
\begin{align*}
	\pi_p^{\ast}\pi_{p\ast}(\iota_p^{\ast}\eE) \to \iota_p^{\ast}\eE
	\end{align*}
is an isomorphism, which is equivalent to 
that $\iota_p^{\ast}\eE \in \widehat{\mathbb{W}}(d^{(\ast)})$.  	
 	\end{proof}
 
 We take the $\C$-action on $\nN_{\cC/S}(\eE^{\bullet}, d)$
 by the weight one action on the factor 
 $\mM_{\cC/S}(\eE^0, d)$, i.e $t \in \C$
 acts on (\ref{Tvalue}) 
 by 
 \begin{align*}
 		(\eE_T^0 \stackrel{tu}{\to} \pP \stackrel{v}{\to} \eE_T^{-1} \otimes \omega_{\cC_T/T}[1]). 
 	\end{align*}
 The function (\ref{tr:1}) is of weight one with respect to the 
 above $\C$-action. 
 So we have the triangulated category of $\C$-equivariant 
 factorizations of $w$
 \begin{align*}
 	\MF^{\C}(\nN_{\cC/S}(\eE^{\bullet}, d), w). 
 	\end{align*}
Let us consider $\Theta$-stratifications (\ref{theta:N}). 
For each $1\le i\le N^{\pm}$, let 
us take $m_{i}^{\pm} \in \mathbb{R}$. 
Similarly to (\ref{window:m}), 
the window subcategory 
\begin{align*}
\mathbb{W}_{m_{\bullet}^{\pm}}(d) \subset 
		\MF^{\C}(\nN_{\cC/S}(\eE^{\bullet}, d), w)
	\end{align*}
is defined to be consisting of objects $\pP$
so that for each $1\le i \le N^{\pm}$ and 
center $\zZ_i^{\pm} \subset \sS_i^{\pm}$, we have
the weight condition with respect to the canonical 
$\C$-stabilizer groups in $\zZ_i^{\pm}$: 
\begin{align}\label{cond:weight}
	\wt(\pP|_{\zZ_i^{\pm}})
	\subset [m_i^{\pm}, m_i^{\pm}+\eta_i^{\pm})
	\end{align}
Here $\eta_{i}^{\pm} \in \mathbb{Z}$ are defined as in (\ref{etai}), 
i.e. the weight of the conormal bundle of 
$\sS_i^{\pm}$ inside $\nN_{\cC/S}(\eE^{\bullet}, d)$
restricted to $\zZ_i^{\pm}$. 

\begin{prop}\label{prop:window:eq}
	There exist equivalences 
	\begin{align}\label{weq:N}
	\mathbb{W}_{m_{\bullet}^+}(d) \simeq D^b(\mathbf{Quot}(\eE, d)), \
\mathbb{W}_{m_{\bullet}^-}(d) \simeq D^b(\mathbf{Quot}(\hH, d)). 	
		\end{align}
	\end{prop}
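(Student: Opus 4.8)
The plan is to prove both equivalences by combining the Koszul duality of Theorem~\ref{thm:koszul} with a factorization-category analogue of the window theorem (Theorem~\ref{thm:window}) applied to the $\Theta$-stratifications (\ref{theta:N}). I will establish the $(+)$-side in detail and deduce the $(-)$-side from it by means of the duality $\mathbb{D}$ of Lemma~\ref{lem:inv}. For the $(+)$-side, the first step is to show that the composition
\begin{align*}
	\mathbb{W}_{m_{\bullet}^+}(d) \hookrightarrow \MF^{\C}(\nN_{\cC/S}(\eE^{\bullet}, d), w) \twoheadrightarrow \MF^{\C}(\nN_{\cC/S}(\eE^{\bullet}, d)^{\lL^+ \sss}, w)
\end{align*}
is an equivalence. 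Since $\nN_{\cC/S}(\eE^{\bullet}, d)$ and its $\Theta$-stratification are described formally locally on $M_{\cC/S}(d)$ by the quiver model of diagram (\ref{dia:formal20}), under which $(\lL, b)$ pulls back to $(\chi_0, \lvert \ast \rvert)$ and the $\Theta$-strata pull back to the KN strata (\ref{KN:Gm}), I would reduce this to the local statement and invoke the factorization version of Theorem~\ref{thm:window}. The weight condition (\ref{cond:weight}) defining $\mathbb{W}_{m_{\bullet}^+}(d)$ is imposed on the centers $\zZ_i^+$ with the integers $\eta_i^+$ of (\ref{etai}), which is exactly the local window condition on each formal chart; one then checks that the resulting chart-wise equivalences are compatible and glue.

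The second step identifies the target with $D^b(\mathbf{Quot}_{\cC/S}(\eE, d))$. Because $\lL^+$-semistability constrains only the $\mM_{\cC/S}(\eE^0, d)$-factor (the image of $\eE^0 \to \pP$ must generate, which is precisely the condition cutting out $\Quot_{\cC/S}(\eE^0, d)$, cf.~Remark~\ref{rmk:b=0}), the semistable locus is
\begin{align*}
	\nN_{\cC/S}(\eE^{\bullet}, d)^{\lL^+ \sss} = \Quot_{\cC/S}(\eE^0, d) \times_{\mM_{\cC/S}(d)} \mM_{\cC/S}(\eE^{-1}, d)^{\vee},
\end{align*}
namely the total space of the dual bundle over $\Quot_{\cC/S}(\eE^0, d)$ equipped with the function $w$ of (\ref{tr:1}). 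By the very definition of $\mathbf{Quot}_{\cC/S}(\eE, d)$ in (\ref{dia:MC2}) as the derived zero locus of the section $\phi$ over $\Quot_{\cC/S}(\eE^0, d)$, Theorem~\ref{thm:koszul} applied over this open locus yields $\MF^{\C}(\nN_{\cC/S}(\eE^{\bullet}, d)^{\lL^+ \sss}, w) \simeq D^b(\mathbf{Quot}_{\cC/S}(\eE, d))$, which combined with the first step gives the $(+)$-equivalence.

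For the $(-)$-side, the window-theorem step run with $\lL^-$ in place of $\lL^+$ gives $\mathbb{W}_{m_{\bullet}^-}(d) \simeq \MF^{\C}(\nN_{\cC/S}(\eE^{\bullet}, d)^{\lL^- \sss}, w)$. The duality $\mathbb{D}$ of Lemma~\ref{lem:inv} intertwines $(w, \lL^-)$ on $\nN_{\cC/S}(\eE^{\bullet}, d)$ with $(w^{\vee}, \lL^+)$ on $\nN_{\cC/S}(\eE_{\bullet}, d)$: under $\pP \mapsto \pP^{\vee}$ the generation condition defining $\lL^-$-stability turns into the $\lL^+$-stability on the $\hH$-side, so that $\MF^{\C}(\nN_{\cC/S}(\eE^{\bullet}, d)^{\lL^- \sss}, w) \simeq \MF^{\C}(\nN_{\cC/S}(\eE_{\bullet}, d)^{\lL^+ \sss}, w^{\vee})$. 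Applying the second step to $\hH$ then identifies the latter with $D^b(\mathbf{Quot}_{\cC/S}(\hH, d))$, which proves the $(-)$-equivalence.

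The main obstacle I anticipate is the first step: upgrading the window theorem to the factorization-category and $\Theta$-stratification setting and patching the formal-local equivalences into a single global one. Theorem~\ref{thm:window} is stated for KN stratifications on a global quotient stack, whereas $\nN_{\cC/S}(\eE^{\bullet}, d)$ is only formally locally of that form; the real content is to verify that the globally defined window subcategory, cut out by the weight bounds on the centers $\zZ_i^{\pm}$, restricts to the local windows on the charts of (\ref{dia:formal20}) and that the chart-wise equivalences are independent of the chart, so that they glue over $M_{\cC/S}(d)$. Once this gluing is in place, the remaining identifications are formal consequences of Koszul duality and of the duality $\mathbb{D}$.
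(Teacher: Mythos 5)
Your overall strategy is the one the paper follows: the window theorem for the $\Theta$-stratifications (\ref{theta:N}) identifies $\mathbb{W}_{m_{\bullet}^{\pm}}(d)$ with the factorization categories on the $\lL^{\pm 1}$-semistable loci; the $\lL$-semistable locus is the total space of $\mM_{\cC/S}(\eE^{-1},d)^{\vee}$ over $\Quot_{\cC/S}(\eE^{0},d)$, so Koszul duality (Theorem~\ref{thm:koszul}) gives the $\eE$-side; and the involution $\mathbb{D}$ of Lemma~\ref{lem:inv} converts the $\lL^{-1}$-side into the $\hH$-side. One remark on your ``main obstacle'': the paper does not glue chart-wise window equivalences. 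It invokes the window theorem for $\Theta$-stratifications on the global stack directly (the $\Theta$-stratification window theorem for perfect complexes, upgraded to factorization categories by the standard argument), and the formal-local charts (\ref{dia:formal20}) are used only to identify the strata, their centers and the semistable loci, not to produce the equivalence. Gluing equivalences of categories over a cover requires checking compatibilities on overlaps and is considerably more delicate than what you sketch, so you should replace that part of your plan by the citation.

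The genuine gap is in the Koszul duality step: you never specify which auxiliary $\C$-action defines $\MF^{\C}(\nN_{\cC/S}(\eE^{\bullet},d),w)$. Both window subcategories $\mathbb{W}_{m_{\bullet}^{\pm}}(d)$ live inside one and the same equivariant factorization category, with a single fixed $\C$-action (the paper scales the $\mM_{\cC/S}(\eE^{0},d)$-factor, i.e.\ the map $u$ in (\ref{Tvalue})). Theorem~\ref{thm:koszul} requires the action to have weight one on the fibers of $\fF^{\vee}\to\yY$: for the $\eE$-side that is the $v$-direction, while for the $\hH$-side, after transporting by $\mathbb{D}$, it is the $\mM_{\cC/S}(\eE_{0},d)^{\vee}$-direction. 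These two requirements are incompatible with a single fixed grading, so for at least one of the two sides Theorem~\ref{thm:koszul} does not literally apply as you state it. The paper resolves this with Lemma~\ref{lem:action}, which exchanges the two auxiliary $\C$-actions using the central one-parameter subgroup supplied by the presentation (\ref{isom:QuotW}) of $\mM_{\cC/S}(d)$: the diagonal $\C\subset\GL(W)$ acts trivially on $\Quot^{\circ}_{\cC/S}(W\otimes\oO_{\cC},d)$ and with weight one on the fibers of $\mM_{\cC/S}(\eE^{i},d)\to\mM_{\cC/S}(d)$. You need to insert this re-grading equivalence; without it one of the two asserted equivalences in (\ref{weq:N}) is unjustified.
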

\begin{proof}
	A version of window theorem 
	implies that the composition functors 
	\begin{align}\label{equiv:Wquot}
		\mathbb{W}_{m_{\bullet}^{\pm}}(d)
		\hookrightarrow 	\MF^{\C}(\nN_{\cC/S}(\eE^{\bullet}, d), w)
		\twoheadrightarrow 	\MF^{\C}(\nN_{\cC/S}
		(\eE^{\bullet}, d)^{\lL^{\pm 1}\sss}, w)
		\end{align}
	are equivalences
	(see~\cite[Prop 2.4.2]{HalpK3} for perfect complexes, 
	and then use the argument of~\cite[Proposition~5.5]{MR3327537} to deduce 
	the result for factorization categories). 
	From the formal local description of $\Theta$-stratifications 
by the diagram (\ref{dia:formal20}), 
it follows that (\ref{Tvalue}) is a $T$-valued point of 
$\nN_{\cC/S}(\eE^{\bullet}, d)^{\lL\sss}$ if and only if 
$u \colon \eE_T^0 \to \pP$ is surjective. 
Therefore
from Theorem~\ref{thm:koszul}, 
the last category in (\ref{equiv:Wquot}) for $\lL$-semistable 
part is equivalent to 
$D^b(\mathbf{Quot}(\eE, d))$, 
so 
the first equivalence in (\ref{weq:N})
follows. 

Under the isomorphism (\ref{M:invo}), 
the element line bundle $\lL$
pulls back to $\lL^{-1}$. 
Therefore from the diagram (\ref{dia:commute}),
the isomorphism (\ref{isom:inv2}) restricts to the isomorphism  
\begin{align*}
	\mathbb{D} \colon 
	\nN_{\cC/S}(\eE^{\bullet}, d)^{\lL^{-1} \sss}
	\stackrel{\cong}{\to} \nN_{\cC/S}(\eE_{\bullet}, d)^{\lL \sss}.
	\end{align*}
Similarly to above, the $T$-valued points 
of the right hand side 
consist of (\ref{Tvalue2}) such that 
$u' \colon (\eE_1)_T \to \pP$ is surjective. 
Also note that, under the isomorphism (\ref{isom:QuotW}), 
the diagonal torus $\mathbb{C}^{\ast} \subset \GL(W)$
acts on $\mathrm{Quot}^{\circ}_{\cC/S}(W \otimes \oO_{\cC}, d)$
trivially and acts on fibers of $\mM_{\cC/S}(\eE^i, d) \to \mM_{\cC/S}(d)$
by weight one. 
Therefore by Theorem~\ref{thm:koszul} and Lemma~\ref{lem:action}, 
the last category in (\ref{equiv:Wquot})
for $\lL^{-1}$-semistable part is equivalent to 
$D^b(\mathbf{Quot}(\hH, d))$, so  
the second equivalence in (\ref{weq:N}) follows. 
	\end{proof}

We take the 
special choices of $m_{\bullet}^{\pm}$
by $m_{i}^+=-\eta_i+\varepsilon$
for $0<\varepsilon \ll 1$ 
and $m_i^-=0$, and define 
\begin{align}\label{window:special}
	\mathbb{W}_{\rm{glob}}^+(d) \cneq \mathbb{W}_{m_i^+=-\eta_i^{+}+\varepsilon}(d), \ 
	\mathbb{W}_{\rm{glob}}^-(d) \cneq \mathbb{W}_{m_i^-=0}(d). 
	\end{align}
\begin{lem}\label{lem:tauy}
	An object $\eE \in \MF^{\C}(\nN_{\cC/S}(\eE^{\bullet}, d), w)$ lies in 
	$\mathbb{W}_{\rm{glob}}^+(d)$ 
	(resp.~$\mathbb{W}_{\rm{glob}}^-(d)$) if and only if
	for any $p \in M_{\cC/S}(d)$
as in (\ref{point:p}), the object $\iota_p^{\ast}\eE \in \MF^{\C}(\widehat{\gG}_{a, b}^{(m)}(d^{(\ast)})_R, \iota_p^{\ast}w)$
	lies in $\widehat{\mathbb{W}}_c(d^{(\ast)})$
	for $c=a$ (resp. $c=b$). 
	Here $\iota_p$ is given in (\ref{dia:formal20}) and 
	$\widehat{\mathbb{W}}_c(d^{(\ast)})$ is given in (\ref{abuse}). 
	\end{lem}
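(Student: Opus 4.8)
The plan is to mimic the strategy of Lemma~\ref{lem:loc1}, exploiting that membership in $\mathbb{W}_{\rm{glob}}^{\pm}(d)$ is a criterion that can be tested formally locally along the good moduli space $M_{\cC/S}(d)$. By the choice (\ref{window:special}), an object $\eE$ lies in $\mathbb{W}_{\rm{glob}}^+(d)$ (resp.~$\mathbb{W}_{\rm{glob}}^-(d)$) exactly when the weight condition (\ref{cond:weight}) holds with $m_i^+=-\eta_i^++\varepsilon$ (resp.~$m_i^-=0$) on each center $\zZ_i^{\pm}$. Since this amounts to the vanishing of the weight-$j$ component of $\eE|_{\zZ_i^{\pm}}$ for every $j$ outside the prescribed interval, and since a factorization on $\nN_{\cC/S}(\eE^{\bullet}, d)$ vanishes precisely when its restriction to every formal fiber of $g$ vanishes, the global condition is equivalent to the analogous weight condition on $\iota_p^{\ast}\eE$ for all $p\in M_{\cC/S}(d)$. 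I note here that the window only constrains the $\lambda_i^{\pm}$-weights: as the generators in (\ref{abuse}) allow an arbitrary auxiliary $\C$-weight $\oO_{B\C}(j)$, the $\C$-action defining $\MF^{\C}$ plays no role in the bound, so the two gradings do not interfere.

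First I would make precise that restriction to the centers commutes with the formal-local base change in (\ref{dia:formal20}), so that $(\iota_p^{\ast}\eE)$ restricted to the local centers recovers $\eE|_{\zZ_i^{\pm}}$ over $p$. Under the top isomorphism of (\ref{dia:formal20}), the $\Theta$-stratification of $\nN_{\cC/S}(\eE^{\bullet}, d)$ pulls back to the Kempf--Ness stratification (\ref{KN:Gm}) of $\widehat{\gG}_{a, b}^{(m)}(d^{(\ast)})_R$; in particular each global center $\zZ_i^{\pm}$ pulls back to the disjoint union of products of KN centers $\prod_j \sS_{i_j}^{(j)\pm}$ with $\sum_j i_j = i$, and the associated one parameter subgroups are identified with the $\lambda_{i_j}^{\pm}$ of (\ref{lambdai}). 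Consequently the numerical invariants $\eta_i^{\pm}$, defined globally via the conormal bundles of the strata as in (\ref{etai}), are identified with the local ones computed in Proposition~\ref{prop:WGD} (namely $\eta_i^+=a(d-i)$ on each factor). This is the step where I must carefully match the normal bundles of the strata under (\ref{dia:formal20}) and verify additivity of $\eta$ over the factors.

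Granting this identification, the global weight condition (\ref{cond:weight}) with the prescribed $m_{\bullet}^{\pm}$ restricts, over each $p$, to exactly the window condition cutting out $\mathbb{W}_{m_i=-\eta_i^++\varepsilon}$ on $\widehat{\gG}_{a, b}^{(m)}(d^{(\ast)})_R$ in the $+$ case, and $\mathbb{W}_{m_i=0}$ in the $-$ case. By the multi-quiver and formal-completion versions of Remark~\ref{rmk:Wab}, these subcategories are precisely $\widehat{\mathbb{W}}_a(d^{(\ast)})$ and $\widehat{\mathbb{W}}_b(d^{(\ast)})$ respectively; here the box-product structure in (\ref{box:product}) and (\ref{abuse}) is compatible with the product decomposition of the strata in (\ref{KN:Gm}). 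Combining the two implications of the formal-local criterion then yields the stated equivalence.

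The step I expect to be the main obstacle is the first one: justifying rigorously that the weight condition on the centers is a genuinely conservative, formally local criterion in the factorization setting, and that the centers $\zZ_i^{\pm}$ together with their canonical $\C$-gradings are correctly recovered from the formal fibers $\iota_p^{\ast}$. The multi-quiver bookkeeping --- a single global stratum $\sS_i^{\pm}$ breaking up over a formal neighborhood into the product strata $\prod_j \sS_{i_j}^{(j)\pm}$ --- must be organized so that the single global interval $[m_i^{\pm}, m_i^{\pm}+\eta_i^{\pm})$ matches the product of the local window conditions on the factors; this is where the explicit value of $\eta_i^{\pm}$ and the additivity of $\eta$ over the factors are essential.
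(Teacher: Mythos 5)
Your proposal is correct and follows essentially the same route as the paper: the weight condition defining $\mathbb{W}_{\rm{glob}}^{\pm}(d)$ is checked formally locally over $M_{\cC/S}(d)$, the $\Theta$-stratification pulls back under $\iota_p$ to the KN stratifications (\ref{KN:Gm}) (since $(\lL,b)$ pulls back to $(\chi_0,\lvert\ast\rvert)$), and the resulting local weight windows are identified with $\widehat{\mathbb{W}}_a(d^{(\ast)})$, $\widehat{\mathbb{W}}_b(d^{(\ast)})$ by the argument of Proposition~\ref{prop:WGD} and Remark~\ref{rmk:Wab}. The points you flag as obstacles (conservativity of restriction to formal fibers, additivity of $\eta_i^{\pm}$ over the factors, compatibility of the box-product windows with the product strata) are exactly the ingredients the paper relies on, so no genuinely different idea is needed.
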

\begin{proof}
	The defining condition 
	of $\mathbb{W}_{\rm{glob}}^{\pm}(d)$ is local
	on $M_{\cC/S}(d)$, 
	so $\eE$ is an object in $\mathbb{W}_{\rm{glob}}^{\pm}(d)$ if and 
	only if $\iota_p^{\ast}\eE$ satisfies the same 
	weight condition (\ref{cond:weight})
	with respect to the $\Theta$-stratifications (\ref{theta:N})
	pulled back via $\iota_p$.  
	As we already mentioned, they coincide with 
	KN stratifications in (\ref{KN:Gm}). 
	Therefore the argument of Proposition~\ref{prop:WGD}
	implies the lemma (see Remark~\ref{rmk:Wab}). . 
	\end{proof}

\subsection{Categorified Hall product}
For a decomposition $d=d_1+d_2$, 
we define the stack over $S$
\begin{align*}
	\eE x_{\cC/S}(d_1, d_2) \colon (Sch/S) \to (Groupoid)
	\end{align*}
whose $T$-valued points consist of exact sequences 
\begin{align}\label{ex:P}
	0 \to \pP_1 \to \pP_3 \to \pP_2 \to 0
	\end{align}
where $\pP_i$ are $T$-valued points of $\mM_{\cC/S}(d_i)$
with $d_3=d$. 
The stack $\eE x_{\cC/S}(d_1, d_2)$ is a smooth Artin stack of finite type over $S$. 
Indeed we have the obvious 
evaluation morphisms 
\begin{align}\label{stack:zero0}
	\xymatrix{
\eE x_{\cC/S}(d_1, d_2) \ar[r]^-{\ev_3}\ar[d]_-{(\ev_1, \ev_2)} & \mM_{\cC/S}(d_3) \\
\mM_{\cC/S}(d_1)\times_S \mM_{\cC/S}(d_2)	
}
	\end{align}
The left vertical arrow is smooth because of 
the vanishing of $\Ext^2$ on curves, so 
$\eE x_{\cC/S}(d_1, d_2)$ is also smooth. 
Since every stacks in (\ref{stack:zero0}) are smooth over $S$ 
and $\ev_3$ is proper, 
we have the functor 
\begin{align}\label{chall1}
	\ast \cneq \ev_{3\ast}(\ev_1, \ev_2)^{\ast} \colon 
	D^b(\mM_{\cC/S}(d_1)) \boxtimes_S D^b(\mM_{\cC/S}(d_2)) \to 
	D^b(\mM_{\cC/S}(d))
	\end{align}
giving categorified Hall algebra
structure
on 
\begin{align}\label{chall1.5}
	\bigoplus_{d\ge 0} D^b(\mM_{\cC/S}(d)).
	\end{align}

We also define the stack over $S$
\begin{align*}
	\eE x_{\cC/S}(\eE^{\bullet}, d_1, d_2)
	\colon (Sch/S) \to (Groupoid)
	\end{align*}
whose $T$-valued points consist of diagrams 
\begin{align}\label{ex:Tvalue}
	\xymatrix{
	 & 0 \ar[d] & \\
0\ar[r] \ar[d] & \pP_1 \ar[r] \ar[d] & 0\ar[d] \\
\eE_T^0 \ar[r]^-{u_3} \ar@{=}[d] & \pP_3 \ar[r]^-{v_3} \ar[d] & \eE_T^{-1} \otimes \omega_{\cC_T/T}[1] \ar@{=}[d] \\
\eE_T^0 \ar[r]^-{u_2} & \pP_2 \ar[r]^-{v_2} \ar[d] & \eE_T^{-1} \otimes \omega_{\cC_T/T}[1] \\
& 0 &
}
\end{align}
where $\pP_i$ are $T$-valued points of $\mM_{\cC/S}(d_i)$
with $d_3=d$, 
the vertical arrows are exact sequences, the middle and the bottom 
arrows are $T$-valued points of 
$\nN_{\cC/S}(\eE^{\bullet}, d_i)$ for $i=3, 2$. 

The stack $\eE x_{\cC/S}(\eE^{\bullet}, d_1, d_2)$
is a smooth Artin stack of finite type. 
Indeed given an exact sequence (\ref{ex:P}), 
giving a diagram (\ref{ex:Tvalue}) is equivalent to giving 
morphisms $\eE_T^0 \to \pP_3$ 
and $\pP_2 \to \eE_T^{-1} \otimes \omega_{\cC_T/T}[1]$, so
$\eE x_{\cC/S}(\eE^{\bullet}, d_1, d_2)$ is constructed as 
a Cartesian square
\begin{align*}
	\xymatrix{
	\eE x_{\cC/S}(\eE^{\bullet}, d_1, d_2) \ar[r] \ar[d] \diasquare & \eE x_{\cC/S}(d_1, d_2) 
	\ar[d]^-{(\ev_2, \ev_3)} \\
	\mM_{\cC/S}(\eE^{-1}, d_2)^{\vee} \times_S 
	\mM_{\cC/S}(\eE^0, d_3) \ar[r] & \mM_{\cC/S}(d_2) \times_S \mM_{\cC/S}(d_3). 
}
	\end{align*}
Since $\eE x_{\cC/S}(d_1, d_2)$ is smooth and the bottom 
horizontal arrow is a vector bundle, 
the above construction in particular implies that 
$\eE x_{\cC/S}(\eE^{\bullet}, d_1, d_2)$ is smooth over $S$. 

We have the obvious evaluation morphisms, 
commuting with super-potentials in (\ref{tr:1})
\begin{align}\label{stack:zero}
	\xymatrix{
		\eE x_{\cC/S}(\eE^{\bullet}, d_1, d_2) 
		\ar[r]^-{\ev_3}\ar[d]_-{(\ev_1, \ev_2)} & \nN_{\cC/S}(\eE^{\bullet}, d_3) \ar[dd]  
		\ar[rd]^-{w}& \\
		\mM_{\cC/S}(d_1)\times_S \nN_{\cC/S}(\eE^{\bullet}, d_2)
		 \ar[d] \ar[rr]^{(0, w)} & & \mathbb{A}^1 \\
		M_{\cC/S}(d_1)\times_S M_{\cC/S}(d_2)	\ar[r]^-{\oplus}  &
	M_{\cC/S}(d_3).  &  	
	}
\end{align}
Here the right vertical arrow 
is the morphism $g$ in (\ref{dia:gmoduli}), and the left bottom 
vertical arrow is the product of (\ref{pi:MCS}) with $g$. 
We have an auxiliary $\C$-action on $\eE x_{\cC/S}(\eE^{\bullet}, d_1, d_2)$
acting on (\ref{ex:Tvalue})
by $(u_3, v_3, u_2, v_2) \mapsto (tu_3, v_3, tu_2, v_2)$. 
Then the diagram (\ref{stack:zero}) is equivariant under the 
auxiliary $\C$-actions. 
Also note that every stacks in the upper left diagram in (\ref{stack:zero}) are smooth and 
$\ev_3$ is proper as any fiber is a closed subscheme of a Quot scheme.
Therefore 
we have the categorified Hall product 
\begin{align}\label{prod:MFN}
	\ast \cneq 
	\ev_{3\ast}(\ev_1, \ev_2)^{\ast} \colon 
	D^b(\mM_{\cC/S}(d_1)) \boxtimes_S 
	\MF^{\C}(\nN_{\cC/S}(\eE^{\bullet}, d_2), w)
	\to 	\MF^{\C}(\nN_{\cC/S}(\eE^{\bullet}, d), w)
	\end{align}
which gives a left module structure of (\ref{chall1.5}) on 
\begin{align*}
	\bigoplus_{d\ge 0} \MF^{\C}(\nN_{\cC/S}(\eE^{\bullet}, d), w). 
	\end{align*}
\subsection{Semiorthogonal decomposition}
Recall that $r=\rank(\eE)$, 
$a=\rank\eE^0$, $b=\rank \eE^{-1}$ so that $r=a-b$. 
We prove the following proposition: 
\begin{prop}\label{prop:main}
	For $(d_1, \ldots, d_r) \in \mathbb{Z}_{\ge 0}^r$, 
	the categorified Hall product (\ref{prod:MFN}) 
	induces the fully-faithful 
	functor 
	\begin{align}\notag
		\ast \colon 
		\mathbb{W}_{\rm{glob}}(d_1) \boxtimes_S
		(\mathbb{W}_{\rm{glob}}(d_2)\otimes \lL)
	\boxtimes_S \cdots \boxtimes_S 
	(\mathbb{W}_{\rm{glob}}(d_r)\otimes \lL^{r-1})
&\boxtimes_S	(\mathbb{W}^{-}_{\rm{glob}}(d-\sum_{i=1}^r d_i)\otimes \lL^{r}) \\
\label{funct:ast}&\to \mathbb{W}_{\rm{glob}}^+(d)
\end{align}	
such that, by setting $\cC_{\rm{glob}}^-(d_{\bullet})$ to 
be the essential image of the above functor, 
we have the semiorthogonal decomposition 
\begin{align*}
	\mathbb{W}_{\rm{glob}}^+(d)
	=\langle \cC_{\rm{glob}}^-(d_{\bullet}) : 
	d_{\bullet} \in \mathbb{Z}_{\ge 0}^r \rangle. 
	\end{align*}
Here $\Hom(\cC_{\rm{glob}}(d_{\bullet}'), \cC_{\rm{glob}}(d_{\bullet}))=0$
for $d_{\bullet}' \succ d_{\bullet}$. 
\end{prop}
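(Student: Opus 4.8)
The plan is to deduce the global statement entirely from its formal-local counterpart, Theorem~\ref{thm:SOD2}, using the good moduli space $\pi_{\mM}\colon \mM_{\cC/S}(d) \to M_{\cC/S}(d)$ (and its affine analogue $g\colon \nN_{\cC/S}(\eE^{\bullet}, d)\to N_{\cC/S}(\eE^{\bullet}, d)$) as a descent device. The formal fibers of $\nN_{\cC/S}(\eE^{\bullet}, d)$ over the closed points $p$ of $M_{\cC/S}(d)$ are, by the diagrams (\ref{dia:formal10}) and (\ref{dia:formal20}), exactly the stacks $\widehat{\gG}_{a, b}^{(m)}(d^{(\ast)})_R$ carrying the function $w$ to which Theorem~\ref{thm:SOD2} applies, and the window subcategories match under $\iota_p^{\ast}$ by Lemma~\ref{lem:loc1} and Lemma~\ref{lem:tauy}. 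Thus every assertion (well-definedness, fully-faithfulness, semiorthogonality, generation) is to be reduced to a statement about $\iota_p^{\ast}$ for all $p$.

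First I would check that the functor (\ref{funct:ast}) is well-defined, i.e. that the categorified Hall product (\ref{prod:MFN}) sends the source subcategories into $\mathbb{W}_{\rm{glob}}^+(d)$. By Lemma~\ref{lem:tauy} membership in $\mathbb{W}_{\rm{glob}}^+(d)$ is tested after restriction along each $\iota_p$ in (\ref{dia:formal20}), so it suffices to observe that the global product (\ref{prod:MFN}) restricts along $\iota_p$ to the formal-local product (\ref{Hprod:hat}); this is precisely the comparison of the diagrams (\ref{stack:zero}) and (\ref{dia:formal2}) under base change. Combined with Lemma~\ref{lem:loc1}, the inclusion statement of Theorem~\ref{thm:SOD2} then gives the well-definedness.

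For fully-faithfulness and semiorthogonality, the key point is that all functors in play are linear over $M_{\cC/S}(d)$: the morphism complexes between the relevant objects are computed as $\dR\Gamma$ over $M_{\cC/S}(d)$ of a coherent complex obtained by derived pushforward to the good moduli space, and the formation of this complex commutes with the flat base change to the formal neighbourhood $\widehat{M}_{\cC/S}(d)_p$ at each $p$ as in (\ref{point:p}). Concretely, for objects $E, F$ in the source categories one expresses $\dR\Hom(\ast(E), \ast(F))$ via $\ev_{3\ast}(\ev_1,\ev_2)^{\ast}$ in (\ref{stack:zero}), using properness of $\ev_3$ for proper base change. Since a coherent complex on $M_{\cC/S}(d)$ vanishes, and the cone of a morphism of such complexes vanishes, if and only if it does so after completion at every closed point, both the semiorthogonality $\Hom(\cC_{\rm{glob}}^-(d'_{\bullet}), \cC_{\rm{glob}}^-(d_{\bullet}))=0$ for $d'_{\bullet}\succ d_{\bullet}$ and the fully-faithfulness of (\ref{funct:ast}) reduce to their formal-local versions furnished by Theorem~\ref{thm:SOD2}.

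Finally, for generation I would show that an object $P \in \mathbb{W}_{\rm{glob}}^+(d)$ in the right orthogonal of all $\cC_{\rm{glob}}^-(d_{\bullet})$ vanishes. By the formal-local semiorthogonal decomposition of Theorem~\ref{thm:SOD2} together with Lemma~\ref{lem:tauy}, $\iota_p^{\ast}P$ is orthogonal to every $\widehat{\cC}(d_{\bullet})$ for each $p$, hence $\iota_p^{\ast}P=0$; as this holds for all $p$ we conclude $P=0$, so the $\cC_{\rm{glob}}^-(d_{\bullet})$ generate. I expect the main obstacle to lie in the third step: making rigorous that the Hom-complexes descend to $M_{\cC/S}(d)$ and that their formation commutes with formal completion, since the Hall product is assembled from the non-representable evaluation maps of the stack of exact sequences $\eE x_{\cC/S}(\eE^{\bullet}, d_1, d_2)$, and one must control the interaction of $\ev_{3\ast}$ with base change along $\widehat{M}_{\cC/S}(d)_p \to M_{\cC/S}(d)$ in the factorization setting.
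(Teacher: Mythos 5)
Your proposal is correct and follows essentially the same route as the paper: the paper's proof likewise reduces everything to the formal-local Theorem~\ref{thm:SOD2} by identifying the base change of the diagram (\ref{stack:zero}) along $\widehat{M}_{\cC/S}(d)_p \to M_{\cC/S}(d)$ with (the direct sum over decompositions $d^{(\ast)}=d_1^{(\ast)}+d_2^{(\ast)}$, $\lvert d_i^{(\ast)}\rvert = d_i$ of) the diagram (\ref{dia:formal2}), and then invokes Lemma~\ref{lem:loc1}, Lemma~\ref{lem:tauy} and the descent argument of \cite[Theorem~5.16]{Toconi} that you spell out in your third and fourth paragraphs.
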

\begin{proof}
	Following the argument of~\cite[Theorem~5.16]{Toconi}, 
	it is enough to show that the functor (\ref{funct:ast})
	is fully-faithful and forms a semiorthogonal decomposition 
	formally locally on $M_{\cC/S}(d)$. 
	Let us take a closed point $p \in M_{\cC/S}(d)$
	written as in (\ref{point:p}), and set $R=\widehat{\oO}_{S, s}$. 
	We write the formal completion $\widehat{M}_{\cC/S}(d)_p$
	as $\widehat{M}_{\cC/S}(d)_{d^{(\ast)}p^{(\ast)}}$. 
	Let $d=d_1+d_2$ be a decomposition. 
	Then we have the Cartesian square 
	\begin{align*}
		\xymatrix{
		\coprod_{d_1^{(\ast)}+d_2^{(\ast)}=d^{(\ast)},
			\lvert d_i^{(\ast)}\rvert=d_i}
	\widehat{M}_{\cC/S}(d_1)_{d_1^{(\ast)}p^{(\ast)}} \widehat{\times}_R
	\widehat{M}_{\cC/S}(d_2)_{d_2^{(\ast)}p^{(\ast)}}
	\ar[r] \ar[d] & \widehat{M}_{\cC/S}(d)_{d^{(\ast)}p^{(\ast)}} \ar[d] \\
	M_{\cC/S}(d_1) \times_S M_{\cC/S}(d_2) \ar[r]^-{\oplus} & M_{\cC/S}(d). 
}
		\end{align*}
Together with the diagrams (\ref{dia:formal10}), (\ref{dia:formal20}), 
the base change of the diagram (\ref{stack:zero})
via $\widehat{M}_{\cC/S}(d)_p \to M_{\cC/S}(d)$
gives the diagram 
 	\begin{align}\notag
 	\xymatrix{\widehat{\eE x}(\eE^{\bullet}, d_1, d_2)_p
 		\ar[r] \ar[d]   
 		& \widehat{\gG}_{a, b}^{(m)}(d^{(\ast)})_R \ar[dd] \ar[rd]^-{\iota_p^{\ast}w} &\\
 		\coprod_{d_1^{(\ast)}+d_2^{(\ast)}=d^{(\ast)},
 		\lvert d_i^{(\ast)}\rvert=d_i}	\widehat{\mM}_{Q^{(m)}}(d_1^{(\ast)})_R \widehat{\times}_R
 		\widehat{\gG}_{a, b}^{(m)}(d_2^{(\ast)})_R
 		\ar[d] \ar[rr]^-{(0, \iota_p^{\ast}w)}
 		& & \mathbb{A}^1 \\
 		\coprod_{d_1^{(\ast)}+d_2^{(\ast)}=d^{(\ast)},
 		\lvert d_i^{(\ast)}\rvert=d_i}	\widehat{M}_{Q^{(m)}}(d_1^{(\ast)})_R \widehat{\times}_R
 		\widehat{M}_{Q^{(m)}}(d_2^{(\ast)})_R
 		\ar[r]^-{\oplus}
 		& \widehat{M}_{Q^{(m)}}(d^{(\ast)})_R. &
 	}
 \end{align}
By comparing with the diagram (\ref{dia:formal2}), we see that the 
base change of the product functor (\ref{prod:MFN}) via $\widehat{M}_{\cC/S}(d)_p \to M_{\cC/S}(d)$ is identified with 
the direct sum of (\ref{Hprod:hat}) for all the 
decompositions $d^{(\ast)}=d_1^{(\ast)}+d_2^{(\ast)}$ with 
$\lvert d_i^{(\ast)} \rvert=d_i$. 

For $(d_1, \ldots, d_r) \in \mathbb{Z}_{\ge 0}^r$, the above 
argument shows that the base change of the product functor 
\begin{align*}
\ast \colon	D^b(\mM_{\cC/S}(d_1)) \boxtimes_S \cdots \boxtimes_S
	D^b(\mM_{\cC/S}(d_r)) &\boxtimes_S \MF^{\C}(\nN_{\cC/S}(\eE^{\bullet}, d-\sum_{i=1}^r d_i), 
	w) \\
	&\to \MF^{\C}(\nN_{\cC/S}(\eE^{\bullet}, d), 
	w)
	\end{align*}
via 
$\widehat{M}_{\cC/S}(d)_p \to M_{\cC/S}(d)$
gives the Hall product in Subsection~\ref{subsec:someversion}
\begin{align*}
	\ast \colon 
	\bigoplus_{\begin{subarray}{c}
		(d_1^{(\ast)}, \ldots, d_r^{(\ast)}) \\
		\lvert d_i^{(\ast)} \rvert=d_i
	\end{subarray}}
	D^b(\widehat{\mM}_{Q^{(m)}}(d_1^{(\ast)})_R) \widehat{\boxtimes}_R
	\cdots \widehat{\boxtimes}_R 	D^b(\widehat{\mM}_{Q^{(m)}}(d_r^{(\ast)})_R) 
	&\widehat{\boxtimes}_R \MF^{\C}(\widehat{\gG}_{a, b}^{(m)}(d^{(\ast)}-\sum_{i=1}^r d_i^{(\ast)})_R, \iota_p^{\ast}w) \\
	&\to \MF^{\C}(\widehat{\gG}_{a, b}^{(m)}(d^{(\ast)})_R, \iota_p^{\ast}w).
	\end{align*}
Then from Lemma~\ref{lem:loc1} and Lemma~\ref{lem:tauy}, 
the required formal local 
statement is given in Theorem~\ref{thm:SOD2}. 
	\end{proof}

The following is the main result in this paper: 
\begin{thm}\label{thm:main}
	There is a semiorthogonal decomposition of the form 
	\begin{align*}
		&D^b(\mathbf{Quot}_{\cC/S}(\eE, d))
		= \\
		&\left\langle D^b(\mathrm{Sym}^{d_1}(\cC/S) \times_S
		\cdots \times_S \mathrm{Sym}^{d_r}(\cC/S) \times_S
		\mathbf{Quot}_{\cC/S}(\hH, d-\sum_{i=1}^r d_i) ) :
		(d_1, \ldots, d_r)\in \mathbb{Z}_{\ge 0}^r \right\rangle. 
	\end{align*}
\end{thm}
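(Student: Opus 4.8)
The plan is to assemble Proposition~\ref{prop:main} with the window equivalences of Proposition~\ref{prop:window:eq} and the good moduli space identification of Lemma~\ref{lem:loc1}; all of the genuine content has already been extracted in those statements, so the remaining task is to transport each window subcategory appearing there across to the derived category of an honest space, in exact analogy with the one-line assembly of Corollary~\ref{cor:sod}. First I would apply Proposition~\ref{prop:window:eq} with the distinguished choice of window parameters (\ref{window:special}) to obtain the equivalences
\begin{align*}
\mathbb{W}_{\rm{glob}}^+(d) \simeq D^b(\mathbf{Quot}_{\cC/S}(\eE, d)), \quad
\mathbb{W}_{\rm{glob}}^-(d') \simeq D^b(\mathbf{Quot}_{\cC/S}(\hH, d')),
\end{align*}
where $d' \cneq d-\sum_{i=1}^r d_i$. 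Next, the definition (\ref{window:M1}) of $\mathbb{W}_{\rm{glob}}(d_i)$ as the essential image of the fully-faithful pull-back (\ref{funct:piast}) along the good moduli space morphism (\ref{pi:MCS}) gives
\begin{align*}
\mathbb{W}_{\rm{glob}}(d_i) \simeq D^b(M_{\cC/S}(d_i)) = D^b(\mathrm{Sym}^{d_i}(\cC/S)).
\end{align*}

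With these identifications in hand, I would interpret the essential image $\cC_{\rm{glob}}^-(d_{\bullet})$ of the fully-faithful Hall product functor (\ref{funct:ast}). Since that functor is fully-faithful, $\cC_{\rm{glob}}^-(d_{\bullet})$ is equivalent to its source, so it remains to identify the $S$-relative box-product of the window factors with the derived category of the asserted $S$-fibre product. The twists by $\lL^{i-1}$ are absorbed by the autoequivalence of tensoring the whole product by the external line bundle $\boxtimes_i \lL^{i-1}$, which carries the untwisted box-product subcategory isomorphically onto the twisted one; thus it suffices to treat the untwisted box-product. There, using that $\pi_{\mM}^{\ast}(A_1) \boxtimes \cdots$ is the pull-back of $A_1 \boxtimes \cdots$ along the product of good moduli space morphisms (whose target is smooth, so the pull-back is fully-faithful), together with the $S$-linearity of the equivalence $\mathbb{W}_{\rm{glob}}^-(d') \simeq D^b(\mathbf{Quot}_{\cC/S}(\hH, d'))$, I would conclude that
\begin{align*}
\cC_{\rm{glob}}^-(d_{\bullet}) \simeq D^b(\mathrm{Sym}^{d_1}(\cC/S) \times_S \cdots \times_S \mathrm{Sym}^{d_r}(\cC/S) \times_S \mathbf{Quot}_{\cC/S}(\hH, d')),
\end{align*}
by the very definition of $\boxtimes_S$ recorded in the Notation subsection.

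Finally I would feed these identifications into the semiorthogonal decomposition $\mathbb{W}_{\rm{glob}}^+(d) = \langle \cC_{\rm{glob}}^-(d_{\bullet}) : d_{\bullet} \in \mathbb{Z}_{\ge 0}^r \rangle$ of Proposition~\ref{prop:main}, transporting it through $\mathbb{W}_{\rm{glob}}^+(d) \simeq D^b(\mathbf{Quot}_{\cC/S}(\eE, d))$ to obtain exactly the claimed decomposition, with the ordering and the one-sided orthogonality inherited verbatim. The main obstacle I anticipate is not any of the individual equivalences but the box-product bookkeeping of the middle paragraph: one must check that after the factor-wise equivalences the $S$-relative box-product subcategory really is all of $D^b$ of the $S$-fibre product, i.e.\ that external products of generators split-generate, and that this is compatible with the Koszul-dual factorization picture into which $\mathbf{Quot}_{\cC/S}(\hH, d')$ enters. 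This is precisely the kind of formal-local compatibility already handled in the proof of Proposition~\ref{prop:main} via Theorem~\ref{thm:SOD2}, so I expect it to go through by the same reduction to the fibres over $M_{\cC/S}(d)$.
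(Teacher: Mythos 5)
Your proposal is correct and follows essentially the same route as the paper: the paper's own proof of Theorem~\ref{thm:main} is a one-line assembly of Proposition~\ref{prop:window:eq}, Proposition~\ref{prop:main}, and the identification of $\mathbb{W}_{\rm{glob}}(d)$ with $D^b(\mathrm{Sym}^d(\cC/S))$ coming from its definition via the good moduli space pull-back. The box-product bookkeeping you flag is handled implicitly by the paper's convention $D^b(\xX_1)\boxtimes_S D^b(\xX_2)=D^b(\xX_1\times_S\xX_2)$ together with the fact that each factor-wise equivalence carries split generators to split generators, so no additional argument is needed there.
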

\begin{proof}
	The theorem follows from Proposition~\ref{prop:window:eq}, 
	Proposition~\ref{prop:main}, 
	and the fact that $\mathbb{W}_{\rm{glob}}(d)$ is equivalent to 
	$D^b(\mathrm{Sym}^d(\cC/S))$ by its definition. 
	\end{proof}
\begin{rmk}\label{rmk:boson}
	The number of semiorthogonal summands in Theorem~\ref{thm:main}
	involving $\mathbf{Quot}_{\cC/S}(\hH, d-k)$ is 
	$\dim \mathrm{Sym}^k(\mathbb{C}^r)$, while 
	that involving $\mathbf{Quot}_{S}(\hH, d-k)$ in (\ref{quot:formula})
	is $\dim \mathrm{Sym}^k(\mathbb{C}^r[1])$. 
	So in some sense the semiorthogonal decompositions in Theorem~\ref{thm:main}
	and (\ref{quot:formula}) are related by boson-fermion correspondence 
	(see the related phenomena for cohomological DT theory in~\cite{Davbos}).
\end{rmk}

For a smooth projective curve $C$, 
the Quot scheme of zero-dimensional quotients $\Quot_C(\eE, d)$
is smooth of expected dimension $rd$. 
Therefore Theorem~\ref{thm:main} implies the following: 
\begin{cor}\label{cor:quotC}
	There is a semiorthogonal decomposition of the form 
	\begin{align}\label{quotC2}
		D^b(\Quot_C(\eE, d))=\left\langle  
		D^b(\mathrm{Sym}^{d_1}(C) \times \cdots \times \mathrm{Sym}^{d_r}(C)) :
		d_1+\cdots+ d_r=d
		\right\rangle. 
		\end{align}
	\end{cor}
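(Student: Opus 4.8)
The plan is to deduce the corollary by specializing Theorem~\ref{thm:main} to the absolute case $S=\Spec\mathbb{C}$, $\cC=C$, with $\eE$ a vector bundle of rank $r$. First I would record that a locally free $\eE$ has homological dimension zero, so the hypotheses behind the derived constructions apply with the trivial locally free resolution (\ref{loc:free}) given by $0\to 0 \to \eE \to \eE \to 0$; that is, one takes $\eE^0=\eE$, $\eE^{-1}=0$, and $\phi=0$. With this choice one has $\eE_1=(\eE^{-1})^{\vee}=0$, so by (\ref{bundle:phi}) the dual sheaf is $\hH=\Cok(\phi^{\vee}\colon \eE_0\to \eE_1)=0$. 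Note also that $a=\rank\eE^0=r$ and $b=\rank\eE^{-1}=0$, which is precisely the framed situation of Remark~\ref{rmk:b=0}.

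Next I would trace through the right-hand side of Theorem~\ref{thm:main}. Since $S$ is a point, the relative symmetric products $\mathrm{Sym}^{d_i}(\cC/S)$ reduce to the ordinary symmetric products $\mathrm{Sym}^{d_i}(C)$. The essential simplification comes from the dual factor $\mathbf{Quot}_{C}(\hH, d-\sum_i d_i)=\mathbf{Quot}_{C}(0, d-\sum_i d_i)$: the classical Quot scheme $\Quot_C(0,m)$ of length-$m$ quotients of the zero sheaf is empty for $m>0$ and is a reduced point for $m=0$. Hence the only summands surviving in the semiorthogonal decomposition are those indexed by $(d_1,\ldots,d_r)\in\mathbb{Z}_{\ge 0}^r$ with $d_1+\cdots+d_r=d$, and for each such tuple the factor $\mathbf{Quot}_{C}(0,0)=\Spec\mathbb{C}$ contributes the trivial category $D^b(\mathrm{pt})$. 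Therefore each remaining summand is $D^b(\mathrm{Sym}^{d_1}(C)\times\cdots\times\mathrm{Sym}^{d_r}(C))$, exactly as in (\ref{quotC2}).

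Finally, on the left-hand side I would identify $D^b(\mathbf{Quot}_{C}(\eE,d))$ with $D^b(\Quot_C(\eE,d))$. The derived scheme $\mathbf{Quot}_{C}(\eE,d)$ is quasi-smooth of virtual dimension $\dim S+rd=rd$, while its classical truncation $\Quot_C(\eE,d)$ is smooth of dimension $rd$, as recalled immediately before the statement. Since the classical truncation of a quasi-smooth derived scheme is smooth of dimension equal to the virtual dimension exactly when the derived structure is trivial, the natural closed immersion $\Quot_C(\eE,d)\hookrightarrow \mathbf{Quot}_{C}(\eE,d)$ is an isomorphism of derived schemes, so $D^b(\mathbf{Quot}_{C}(\eE,d))=D^b(\Quot_C(\eE,d))$. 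Assembling the three observations converts the decomposition of Theorem~\ref{thm:main} into (\ref{quotC2}).

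The step needing genuine care, as opposed to routine substitution, is the vanishing of the dual side: I would make sure that with the trivial resolution the sheaf $\hH$ is genuinely zero and that the empty/point dichotomy for $\mathbf{Quot}_{C}(0,m)$ holds at the level of \emph{derived} stacks (the virtual dimension $\dim S-rm$ is zero precisely when $m=0$), so that no hidden derived contributions enter the surviving summands. Everything else is a direct specialization of Theorem~\ref{thm:main}.
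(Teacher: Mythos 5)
Your proposal is correct and follows the same route as the paper, which simply observes that $\Quot_C(\eE,d)$ is smooth of expected dimension $rd$ and then specializes Theorem~\ref{thm:main} to $S=\Spec\mathbb{C}$ with $\eE$ locally free; your explicit verification that $\hH=0$ kills all summands with $\sum_i d_i<d$ and that the derived structure on $\mathbf{Quot}_C(\eE,d)$ is trivial just fills in the steps the paper leaves implicit. (One could shortcut the last point: with $\eE^{-1}=0$ the derived fiber product (\ref{dia:MC}) is literally $\mM_{\cC/S}(\eE^0,d)$, so $\mathbf{Quot}_C(\eE,d)$ is classical by construction.)
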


	\begin{rmk}\label{rmk:sym}
	The semiorthogonal decomposition (\ref{quotC2}) may be further decomposed. Indeed for each $\delta>0$, there is a 
	semiorthogonal decomposition proved in~\cite[Corollary~5.11]{TodDK}
	\begin{align*}
		D^b(\mathrm{Sym}^{g-1+\delta}(C))=\langle D^b(\mathrm{Sym}^{g-1-\delta}(C)), 
		\overbrace{J(C), \ldots, J(C)}^{\delta})
	\end{align*}
	where $g$ is the genus of $C$ and $J(C)$ is the Jacobian of $C$
	So if 
	$d_i>g-1$ for some $i$ in (\ref{intro:sodQ}), it can be further decomposed. 
	In particular if $C=\mathbb{P}^1$, then (\ref{quotC2}) implies the 
	existence of a full exceptional collection whose cardinality is  
	\begin{align*}
		\sum_{d_1+\cdots+d_r=d} \prod_{i=1}^r (d_i+1)
		=\binom{2r+d-1}{d}. 
	\end{align*}	
\end{rmk}

			\bibliographystyle{amsalpha}
	\bibliography{math}

	\vspace{5mm}
	
	Kavli Institute for the Physics and 
	Mathematics of the Universe (WPI), University of Tokyo,
	5-1-5 Kashiwanoha, Kashiwa, 277-8583, Japan.

	\textit{E-mail address}: yukinobu.toda@ipmu.jp
		
	\end{document}